\newtheorem{example}{Example}[section]
\newtheorem{corollary}{Corollary}[section]
\newtheorem{proposition}{Proposition}[section]
\newtheorem{lemma}{Lemma}[section]
\newtheorem{remark}{Remark}[section]
\newtheorem{definition}{Definition}[section]
\newtheorem{theorem}{Theorem}[section]
\newcommand{\RR}{\mathbb{R}}
\newcommand{\NN}{\mathbb{N}}
\DeclareMathOperator{\cl}{cl}
\DeclareMathOperator{\conv}{conv}
\DeclareMathOperator{\cor}{cor}
\DeclareMathOperator{\intt}{int}
\DeclareMathOperator{\bd}{bd}
\DeclareMathOperator{\sepC}{\mathfrak{C}}
\DeclareOldFontCommand{\rm}{\normalfont\rmfamily}{\mathrm}
\DeclareOldFontCommand{\sf}{\normalfont\sffamily}{\mathsf}
\DeclareOldFontCommand{\tt}{\normalfont\ttfamily}{\mathtt}
\DeclareOldFontCommand{\bf}{\normalfont\bfseries}{\mathbf}
\DeclareOldFontCommand{\it}{\normalfont\itshape}{\mathit}
\DeclareOldFontCommand{\sl}{\normalfont\slshape}{\@nomath\sl}
\DeclareOldFontCommand{\sc}{\normalfont\scshape}{\@nomath\sc}
\begin{document}

\title{
Symmetric and Non-Symmetric Cone Separation via Bishop-Phelps Cones in Normed Spaces
}

\author{
Fernando Garc\'ia-Casta\~no\thanks{
              Departamento de Matemáticas, Universidad of Alicante;
              Alicante, Spain;
              Email: fernando.gc@ua.es;
              Orcid: 0000-0002-8352-8235 (Corresponding Author)}
           \and
            Christian G\"unther\thanks{
            Leibniz Universit\"at Hannover, Institut f\"ur Angewandte Mathematik;
            30167 Hannover, Germany; Email: c.guenther@ifam.uni-hannover.de; Orcid: 0000-0002-1491-4896}
           \and
           M.A. Melguizo Padial\thanks{
           Departamento de Matemáticas, Universidad of Alicante;
              Alicante, Spain;
              Email: ma.mp@ua.es;
              Orcid: 0000-0003-0303-791X}
            \and
            Christiane Tammer\thanks{
            Martin Luther University Halle-Wittenberg, Faculty of Natural Sciences II, Institute of Mathematics;
            06099 Halle (Saale), Germany;
            Email: Christiane.Tammer@mathematik.uni-halle.de}
            \\[0.8cm]
\large{Dedicated to Professor Qamrul Hasan Ansari on the occasion of his 65th birthday}
}

\maketitle

\begin{center}\vspace{-1cm}
	\textbf{Abstract}
\end{center}

\begin{abstract}
	In this paper, we study relationships between symmetric and non-symmetric separation of (not necessarily convex) cones by using separating cones of Bishop-Phelps type in real normed spaces. Besides extending some known results for the non-symmetric cone separation approach, we propose a new symmetric cone separation approach and establish cone separation results for it by using some cone separation results obtained for the non-symmetric cone separation approach twice (by swapping the roles of the cones). In addition to specifically emphasizing the results for the convex case, we also present some existence results for (bounded) convex bases of convex cones. Finally, we highlight some applications of symmetric and non-symmetric cone separation in optimization.
\end{abstract}

\begin{flushleft}
	\textbf{Keywords:} Symmetric cone separation, non-symmetric cone separation, nonconvex cone, Bishop-Phelps separating cone, norm-linear separating function
\end{flushleft}

\begin{flushleft}
	\textbf{Mathematics subject classifications (MSC 2010):} 90C29, 90C25, 90C31, 90C48, 46N10
\end{flushleft}

%=========================================================
\section{Introduction} \label{sec:introduction}
%=========================================================

It is well-known that separation plays an important role in variational analysis and optimization.
In particular, \textbf{cone separation theorems} (i.e., theorems related to the separation of two cones by a hyperplane or a certain conical surface) have been studied by several authors in the literature (see, e.g., \cite{GarciaCastano2023},  \cite[Sec. 2.4.3]{GoeRiaTamZal2023}, \cite{gunther2023nonlinear,gunther2023nonlinear_reflexive}, \cite{Henig}, \cite[Th. 3.22]{Jahn2011}, \cite[Sec. 3.7]{Jahn2023}, \cite[Th. 4.3]{Kasimbeyli2010}, \cite[Sec. 2.3]{Khazayel2021a},  \cite{Nieuwenhuis},  \cite[Cor. 2.3]{NovoZalinescu2024}, \cite[Sec. 11]{Rockafellar1970}, and \cite[Sec. 4]{Soltan2018}). Such cone separation theorems are known to be useful in optimization (e.g., for deriving scalarization results for nonconvex vector optimization problems; see, e.g., \cite{AKY2018}, \cite{GarciaCastano2023}, \cite{Gerstewitz1983}, \cite{GerthWeidner}, \cite{gunther2024scalarization}, \cite{Jahn2023}, \cite{Kasimbeyli2010}, \cite{Kasimbeyli2013}, \cite{TammerWeidner2020}). 
In the literature, there are different concepts for the nonlinear separation of two (not necessarily convex) cones by a cone/conical surface (see, e.g., \cite{gunther2023nonlinear,gunther2023nonlinear_reflexive},  \cite{Henig},  \cite{Kasimbeyli2010}, \cite{Nehse1981},  \cite{Nieuwenhuis}).  
Let us formalize our underlying cone separation concepts below.

\medskip

Consider two non-trivial cones $C, K\subseteq X$ (that is, cones different from $\{0_X\}$ and $X$ itself) and a closed, solid, convex cone $\sepC \subseteq X$ (in what follows $\intt\, \sepC$, $\bd\, \sepC$,  $\cl\, \sepC$ stand for the interior, boundary, and closure of $\sepC$, respectively) in a real normed space $(X, ||\cdot||)$. 
\begin{definition} \label{def:separation_cones_with_C}
We say that the cones $C$ and $K$ are 
\begin{itemize}
    \item[$\bullet$] \textbf{strictly separated by (the boundary of the cone) $\sepC$ in a non-symmetric way} (for short, $\sepC \in \mathcal{N}(C,K)$) if 
    \begin{equation}
        \label{eq:sep_1}
        C \setminus \{0_X\} \subseteq \intt\, \sepC \quad \mbox{and} \quad K \setminus \{0_X\} \subseteq X \setminus \sepC,
    \end{equation}
    or equivalently,
    \begin{equation*}
        \label{eq:sep_2}
    C \setminus \{0_X\} \subseteq \intt\, \sepC \quad \mbox{and} \quad K \cap \sepC = \{0_X\}.
    \end{equation*}
    \item[$\bullet$] \textbf{strictly separated by  $\sepC$ (in a symmetric way)} if $$\sepC \in \mathcal{S}(C,K) := \mathcal{N}(C,K) \cup \mathcal{N}(K,C),$$
    where the set $\mathcal{N}(C,K)$ (respectively, $\mathcal{S}(C,K)$) consists of all closed, solid, convex cones in $X$ which are non-symmetric (respectively, symmetric) strictly  separating cones for $C$ and $K$.
\end{itemize}
\end{definition}

\begin{remark} The non-symmetric strict cone separation approach from Definition \ref{def:separation_cones_with_C} is already used in \cite{GarciaCastano2023}, \cite{gunther2023nonlinear,gunther2023nonlinear_reflexive}, \cite{Henig}, \cite{Kasimbeyli2010}, while the symmetric strict cone separation approach appears to be new in the literature to our knowledge.
Note that we have $-\mathcal{N}(C,K) = \mathcal{N}(-C, -K)$.
\end{remark}

In our upcoming cone separation theorems, we like to consider a closed, solid and convex separating cone $\sepC$ that can be represented by the sublevel set (w.r.t. the level 0) of a (lower semicontinuous, convex, positively homogeneous) function $\varphi: X \to \mathbb{R}$. More precisely, $\varphi$ should satisfy the cone representation properties
$$
\sepC = \{x \in X \mid \varphi(x) \leq 0 \} \quad \mbox{and} \quad \intt \sepC = \{x \in X \mid \varphi(x) < 0 \}.
$$
In this case, the condition \eqref{eq:sep_1} (given in an order theory way) can be written in terms of functional analysis
$$
\varphi(x^1) < 0 < \varphi(x^2) \quad \text{for all }x^1 \in C \setminus \{0_X\} \text{ and }  x^2 \in K\setminus \{0_X\}.
$$

In this paper, we study relationships between \textbf{symmetric cone separation} and \textbf{non-symmetric cone separation} of (not necessarily convex) cones by using separating cones of Bishop-Phelps type in real normed spaces.
While the non-symmetric strict separation approach for cones is well studied in the literature (see  \cite{GarciaCastano2023}, \cite{gunther2023nonlinear,gunther2023nonlinear_reflexive}, \cite{Kasimbeyli2010}), the symmetric strict separation approach based on Bishop-Phelps separating cones is new.

For any $(x^*, \alpha) \in X^* \times \RR$ (where $X^*$ is the dual normed space of $X$), let us define two closed cones
\begin{align*}
    C(x^*,\alpha) & :=\{x\in X\mid x^*(x)\geq \alpha \|x\|\},\\
    S(x^*,\alpha) & := -C(x^*,\alpha),
\end{align*}
two sets
\begin{align*}
C^>(x^*,\alpha) & :=\{x\in X\mid x^*(x)>\alpha \|x\|\}\quad (\subseteq C(x^*,\alpha)),\\
    S^<(x^*,\alpha) & := -C^>(x^*,\alpha)\quad (\subseteq S(x^*,\alpha)),
\end{align*}
and a so-called \textbf{norm-linear function} $\varphi_{x^*, \alpha}: X \to \mathbb{R}$ by 
$$\varphi_{x^*, \alpha}(x) := x^*(x) + \alpha ||x|| \quad \text{for all }x \in X.$$
Note that 
\begin{align*}
    -C(x^*,\alpha) = S(x^*,\alpha) = \{x \in X \mid \varphi_{x^*, \alpha}(x) \leq 0\},\\
    -C^>(x^*,\alpha) = S^<(x^*,\alpha) = \{x \in X \mid \varphi_{x^*, \alpha}(x) < 0\}.
\end{align*}
Throughout, we denote the set of non-negative real numbers by $\mathbb{R}_+$ and the set of positive real numbers by $\mathbb{P}$.
\begin{remark} \label{rem:BPcone}
In the case that $(x^*, \alpha) \in (X^* \setminus \{0_{X^*}\}) \times \mathbb{P}$, the set $C(x^*,\alpha)$ is known as \textbf{Bishop-Phelps cone} and the function $\varphi_{x^*, \alpha}$ as \textbf{Bishop-Phelps function} (named after the work by Bishop and Phelps \cite{BP1962}). 
It is known that Bishop-Phelps cones / functions have a lot of useful properties and there are interesting applications in variational analysis and optimization (see, e.g., \cite{GarciaCastano2023}, \cite{gunther2023nonlinear,gunther2023nonlinear_reflexive}, \cite{HaJahn2017,HaJahn2021}, \cite{Ha2022}, \cite{Jahn2009,Jahn2023}, \cite{Kasimbeyli2010,Kasimbeyli2013}). 
Any Bishop-Phelps cone $C(x^*,\alpha)$ is a closed, pointed, convex cone. If $||x^*||_* > \alpha$ (where $||\cdot||_*: X^* \to \mathbb{R}$ denotes the dual norm of $||\cdot||$), then $C(x^*,\alpha)$ is non-trivial and 
$$C^>(x^*,\alpha) = \intt\, C(x^*,\alpha) \neq \emptyset \quad \text{ and } \quad (\bd\,C(x^*,\alpha))\setminus \{0_X\} \neq \emptyset.$$
Note that for $(x^*, \alpha) \in X^* \times \mathbb{P}$ with $||x^*||_* <  \alpha$ we have $C(x^*,\alpha) = \{0_X\}$, while for $||x^*||_* \leq  \alpha$ we have $C^>(x^*,\alpha) = \emptyset$.
If we consider $\alpha \in \mathbb{R}$ in the Bishop-Phelps function $\varphi_{x^*, \alpha}$ instead of $\alpha \in \mathbb{P}$, we call $\varphi_{x^*, \alpha}$ a norm-linear function (as proposed by Zaffaroni \cite{Zaffaroni2022}).
\end{remark}

In the separation of two (not necessarily convex) cones in a real normed space $X$, we are interested in a Bishop-Phelps cone $\sepC = C(x^*,\alpha)$ that strictly separates  (respectively,  in a non-symmetric way) two cones $C$ and $K$,  i.e., $\sepC \in \mathcal{S}(C,K)$ (respectively, $\sepC \in \mathcal{N}(C,K)$).

From now on, we will use calligraphic uppercase letters to denote families of subsets of $X$.
Consider a family $\mathcal{C}$ of cones in $X$ with
$$
\mathcal{C} \subseteq \{\sepC \subseteq X \mid \sepC \text{ is a closed, solid, convex cone}\} =: \mathcal{C}_{{\rm convex}}.
$$

\begin{definition} \label{def:separation_cones_with_C_family}
We say that the cones $C$ and $K$ in $X$ are
\begin{itemize}
    \item[$\bullet$]  \textbf{strictly separated w.r.t. $\mathcal{C}$ in a non-symmetric way} if 
    $$
        \mathcal{N}(C,K \mid \mathcal{C}) := \mathcal{C} \cap \mathcal{N}(C,K) \neq \emptyset.
    $$
    \item[$\bullet$] \textbf{strictly separated w.r.t. $\mathcal{C}$} if 
    $$
        \mathcal{S}(C,K \mid \mathcal{C}) := \mathcal{C} \cap \mathcal{S}(C,K) \neq \emptyset.
    $$
\end{itemize}
\end{definition}
\begin{remark}
It is easy to check that 
\begin{align*}
    -\mathcal{N}(C,K \mid \mathcal{C}) 
    & = \mathcal{N}(-C, -K \mid \mathcal{C});\\
    \mathcal{N}(C,K \mid \mathcal{C}) & \subseteq \mathcal{S}(C,K \mid \mathcal{C});\\
    \mathcal{S}(K,C \mid \mathcal{C}) & = \mathcal{S}(C,K \mid \mathcal{C})  = \mathcal{N}(C,K \mid \mathcal{C}) \cup \mathcal{N}(K,C \mid \mathcal{C}).
\end{align*}
Note that the case 
$\mathcal{N}(C,K \mid \mathcal{C}) \neq \emptyset = \mathcal{N}(K,C \mid \mathcal{C})$
might happen (see e.g. \cite[Sec. 4]{gunther2023nonlinear_reflexive}). 
\end{remark}

In this paper, we will focus on the separation of the given cones $C$ and $K$ by separating cones that belong to the following families of cones:
\begin{align*}
   \mathcal{C}_{{\rm BP}} & := \{C(x^*, \alpha) \mid (x^*, \alpha) \in X^* \times \mathbb{P},\, ||x^*||_* > \alpha \},\\
   \mathcal{C}_{{\rm Lin}} & := \{C(x^*, 0) \mid x^* \in X^*\setminus \{0_{X^*}\}\},
\end{align*}
and other subfamilies $\mathcal{C}_{{\rm BP}^*} \subseteq \mathcal{C}_{{\rm BP}}$ that will be introduced in Section \ref{sec:sym_and_non-sym_cone_separation}. Note that $\mathcal{C}_{{\rm Lin}} \cup \mathcal{C}_{{\rm BP}} \subseteq   \mathcal{C}_{{\rm convex}}$. 

\begin{remark}
    It is easy to check that 
    $$\mathcal{N}(K, C \mid \mathcal{C}_{{\rm Lin}}) = -\mathcal{N}(C, K \mid \mathcal{C}_{{\rm Lin}})$$ 
    and
    $$\mathcal{S}(C,K \mid \mathcal{C}_{{\rm Lin}}) = (\mathcal{N}(C,K \mid \mathcal{C}_{{\rm Lin}})) \cup (-\mathcal{N}(C,K \mid \mathcal{C}_{{\rm Lin}})).$$

The non-symmetric cone separation concept is known to be useful in vector optimization, e.g. in the proper efficiency solution concept in the sense of Henig, one needs to have dilating / enlargement convex cones of the given ordering cone (note that the convex cone $\sepC$ is said to be a dilating cone for the cone $C$ if  $C\setminus \{0_X\} \subseteq \intt\, \sepC$). It may also be interesting that such dilating / enlargement cones belong to a special family $\mathcal{C}$ of convex cones (e.g. $\sepC \in \mathcal{C}_{{\rm BP}}$).

From the viewpoint of classical separation in convex analysis and optimization (not considering a specific application), a symmetric separation concept for cones seems to be preferable. 
\end{remark}

On the one hand, the aim of the paper is to give characterizations for the conditions 
\begin{align*}
    \mathcal{N}(C,K \mid \mathcal{C}_{{\rm BP}^*}) & \neq \emptyset;\\
    \mathcal{N}(C,K \mid \mathcal{C}_{{\rm BP}^*})& \neq \emptyset \neq \mathcal{N}(K,C \mid \mathcal{C}_{{\rm BP}^*});\\
    \mathcal{S}(C,K \mid \mathcal{C}_{{\rm BP}^*}) &\neq \emptyset.
\end{align*}
for certain families $\mathcal{C}_{{\rm BP}^*} \subseteq \mathcal{C}_{{\rm BP}}$ and under certain assumptions on the cones $C$ and $K$.
On the other hand, we are interested in the same conditions with $\cl\,C$ or $\bd\,C$ in the role of $C$, as well as $\cl\,K$ or $\bd\,K$ in the role of $K$. Furthermore, the corresponding relationships between such conditions are of interest. Note that $\bd\,K = \bd((X \setminus K)\cup \{0_X\})$, and, if $C \subseteq K$, then $C \cap ((X \setminus K)\cup \{0_X\}) = \{0_X\}$. In this case, we are interested in separating cones $\sepC$ that belong to the sets $\mathcal{N}(C, (X \setminus K) \cup \{0_X\} \mid \mathcal{C}_{{\rm BP}^*})$ and $\mathcal{S}(C, (X \setminus K)\cup \{0_X\} \mid \mathcal{C}_{{\rm BP}^*})$, respectively, $\mathcal{N}(C, \bd\,K \mid \mathcal{C}_{{\rm BP}^*})$ and $\mathcal{S}(C, \bd\, K \mid \mathcal{C}_{{\rm BP}^*})$. This allows us to derive novel sufficient conditions (see Theorems \ref{th:Omega1and2_BP_cone_nonsym} and \ref{th:Omega1and2_BP_cone_sym}) that ensure
\begin{itemize}
    \item[$\bullet$] $C \cap K = \{0_X\}$ \\$\Longrightarrow \sepC \cap K = \{0_X\}$ for some $\sepC \in \mathcal{C}_{{\rm BP}^*}$ with $C \subseteq \sepC$
    (as studied in \cite[Theorem 5.2]{Kasimbeyli2010}, \cite[Theorem 2.4]{Jahn1985} for $\sepC \in \mathcal{C}_{{\rm convex}}$);
    \item[$\bullet$] $C \cap K = \{0_X\}$\\$\Longrightarrow \sepC \cap  C = \{0_X\} \text{ or } K \cap \sepC = \{0_X\}$ for some $\sepC \in \mathcal{C}_{{\rm BP}^*}$ with $C \subseteq \sepC$ or $K \subseteq \sepC$;
    \item[$\bullet$] $C \subseteq K$\\
    $\Longrightarrow  \intt\,C \subseteq \intt\,\sepC \subseteq \intt\,K \text{ and } C \subseteq \sepC \subseteq K$ for some $\sepC \in \mathcal{C}_{{\rm BP}^*}$. 
\end{itemize}
Our results extend known results derived in  \cite{GarciaCastano2023}, \cite{gunther2023nonlinear} and \cite{Kasimbeyli2010}.

\medskip

The paper is structured as follows.
In Section \ref{sec:preliminaries} we present some basics in real normed spaces and study cones and their properties, separation results for convex sets / cones (in particular,  Proposition \ref{prop:basic separation} is a key result for our work), and augmented dual cones of cones. We also recall known nonlinear separation results for cones based on the non-symmetric approach.
In Section~\ref{sec:sym_and_non-sym_separation} we present our main separation conditions for symmetric and non-symmetric separation of two given cones and study their basic relations. In particular, Propositions \ref{prop:basic separation_BP_e} and \ref{prop:basic separation_BP_f} are key results to connect the symmetric cone separation approach with the non-symmetric cone separation approach.
Section \ref{sec:sym_and_non-sym_cone_separation}  is devoted to symmetric and non-symmetric cone separation based on Bishop-Phelps separating cones.  The main aims of the paper, which we have described above, are achieved in this section. 
Moreover, in Section \ref{sec: existence_bases} we derive some new existence results for bases of convex cones.
Finally, in Section \ref{sec:conclusions} we present a conclusion with some applications in optimization and an outlook for further research.

%=========================================================
\section{Preliminaries in Real Normed Spaces} \label{sec:preliminaries}
%=========================================================

%---------------------------------------------------------
\subsection{Topological Basics}
%---------------------------------------------------------

Throughout the paper, assume that $X$ is a real normed space endowed with the norm $||\cdot||: X \to \mathbb{R}$, $X^*$ is its topological dual space endowed with the dual norm $||\cdot||_*$,  $\mathbb{S}_X$ is the unit sphere of $X$, $\mathbb{B}_X$ is the closed unit ball of $X$, $0_X$ is the origin of $X$, and $0_{X^*}$ is the origin of $X^*$. Note that the dual space $X^*$ of the real normed space $X$ is known to be a real Banach space.

For any set $\Omega \subseteq X$ we denote by $\cl\,\Omega$, $\cl_w\,\Omega$, $\intt\,\Omega$, $\cor\,\Omega$, $\bd\,\Omega$ and $\mbox{conv}\,\Omega$ the closure, weak closure, interior, algebraic interior, boundary, and  convex hull, respectively. 
For any $a,b \in X$ we define 
\begin{align*}
    [a,b] := \{(1-t) a + t b\mid t \in [0,1]\},\\
    (a,b) := \{(1-t) a + t b\mid t \in (0,1)\},\\
    (a,b] := \{(1-t) a + t b\mid t \in (0,1]\}.
\end{align*}

The following auxiliary lemma will be used to prove Lemma \ref{lemma_relative_position}.

\begin{lemma} \label{lema_segmentos}
Consider two sets $A,B \subseteq X$ and assume that 
$\intt\, A \neq \emptyset$. Suppose that there exists $a\in \intt\,A$ such that for every $b \in B$ we have
\begin{equation}\label{condicion_frontera}
(a,b] \cap \bd\,A = \emptyset.
\end{equation}
Then, $B \subseteq \intt\,A$.
\end{lemma}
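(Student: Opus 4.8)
The plan is to argue pointwise: it suffices to fix an arbitrary $b \in B$ and show that $b \in \intt\,A$. If $a = b$ this is immediate since $a \in \intt\,A$, so I would assume $a \neq b$ and consider the continuous path $\gamma \colon [0,1] \to X$ given by $\gamma(t) = (1-t)a + tb$, so that $\gamma(0) = a \in \intt\,A$, $\gamma(1) = b$, and $\gamma((0,1]) = (a,b]$. I would then use the standard disjoint decomposition $X = \intt\,A \,\cup\, \bd\,A \,\cup\, (X \setminus \cl\,A)$ and rule out, one by one, that $b$ lies in $\bd\,A$ or in the exterior $X \setminus \cl\,A$; what remains must be $b \in \intt\,A$.

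Ruling out $b \in \bd\,A$ is immediate: since $a \neq b$ we have $b \in (a,b]$, so $b \in \bd\,A$ would give $b \in (a,b] \cap \bd\,A$, contradicting hypothesis \eqref{condicion_frontera}.

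The substantive step is to rule out $b \in X \setminus \cl\,A$, and this is where I expect the only real work to be. The idea is an intermediate-value argument along $\gamma$: in moving from the interior point $a$ to an exterior point $b$, the path must meet $\bd\,A$. Concretely, I would set $s := \sup\{t \in [0,1] \mid \gamma(t) \in \cl\,A\}$. This set is nonempty (it contains $0$, as $\gamma(0) = a \in \intt\,A \subseteq \cl\,A$) and closed (as $\cl\,A$ is closed and $\gamma$ continuous), so the supremum $s$ is attained, i.e. $\gamma(s) \in \cl\,A$. Since $\intt\,A$ is open and contains $\gamma(0)$, we have $\gamma(t) \in \intt\,A$ for all sufficiently small $t \geq 0$, whence $s > 0$; and since $b = \gamma(1) \in X \setminus \cl\,A$ we have $s < 1$. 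Thus $\gamma(s) \in (a,b) \subseteq (a,b]$.

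It then remains to verify that $\gamma(s) \in \bd\,A$, which yields $\gamma(s) \in (a,b] \cap \bd\,A$, again contradicting \eqref{condicion_frontera} and finishing the argument. By definition of $s$, for every $t \in (s,1]$ one has $\gamma(t) \notin \cl\,A$, i.e. $\gamma(t) \in X \setminus A$; letting $t \downarrow s$ and using continuity of $\gamma$ shows $\gamma(s) \in \cl(X \setminus A)$. Combined with $\gamma(s) \in \cl\,A$, this gives $\gamma(s) \in \cl\,A \cap \cl(X \setminus A) = \bd\,A$, as desired. Since both remaining cases lead to a contradiction, we conclude $b \in \intt\,A$, and as $b \in B$ was arbitrary, $B \subseteq \intt\,A$. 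The crux is entirely the construction of the crossing point $\gamma(s)$; everything else is bookkeeping on the trichotomy of $X$ relative to $A$.
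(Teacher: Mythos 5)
Your proof is correct, and it rests on the same core idea as the paper's: if $b$ lay outside $\cl\,A$, the segment from the interior point $a$ to $b$ would have to cross $\bd\,A$, contradicting the hypothesis. The differences are in execution, and they mostly work in your favor. The paper defines $I = \{t \in [0,1] \mid f(t) \in A\}$ and $\alpha = \sup I$; since $A$ need not be closed, $\alpha$ need not belong to $I$, so the paper has to extract sequences $(t_n) \subseteq I$ and $(s_n) \subseteq [0,1] \setminus I$ converging to $\alpha$ and split into the cases $\alpha = 1$ and $\alpha \in (0,1)$ to recognize $f(\alpha)$ as a boundary point. You instead take the supremum $s$ over $\gamma^{-1}(\cl\,A)$, which is closed, so $s$ is attained and $\gamma(s) \in \cl\,A$ comes for free; the one-sided limit $t \downarrow s$ gives $\gamma(s) \in \cl(X \setminus A)$, and the identity $\bd\,A = \cl\,A \cap \cl(X \setminus A)$ finishes the job with no case split (your preliminary exclusion of $b \in \bd\,A$ is what guarantees $s < 1$). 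One further point of comparison: the paper's contradiction hypothesis is $B \not\subseteq A$, so as literally written its argument establishes $B \subseteq A$ and leaves implicit the final (easy, hypothesis-based) step that points of $B$ cannot sit on $\bd\,A$; your trichotomy $X = \intt\,A \cup \bd\,A \cup (X \setminus \cl\,A)$ makes that step explicit and delivers $B \subseteq \intt\,A$ directly, so your write-up is, if anything, the more complete of the two.
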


\begin{proof}
Suppose, contrary to our claim, that $B \not\subseteq A$, and pick some $b \in B\setminus A$. Consider the continuous function $t \mapsto f(t) := (1-t)a + tb$ and define the non-empty and bounded set $I:=\{t\in [0,1] \mid f(t)\in A\}$. Denote $\alpha:=\sup I \in [0,1]$. It is clear that $f(\alpha)\in \cl A$. Note that the case $\alpha = 0$ can not appear, since $f(0) \in \intt\, A$ and $f$ is continuous (thus $f(\varepsilon) \in A$ for some $\varepsilon > 0$).
Consider two cases:

\textit{Case 1:} Let $\alpha=1$. Then, $b=f(\alpha)\in (\cl A) \setminus A$, hence $b \in (a,b]\cap \bd\, A$. 

\textit{Case 2:} Let $\alpha \in (0,1)$. We can pick two sequences $(t_n)_n\subseteq I$ and $(s_n)_n\subseteq [0,1]\setminus I$ that converge to $\alpha$, i.e., $\lim_{n \to \infty} t_n = \alpha = \lim_{n \to \infty} s_n$. On the one hand, 
$\lim_{n \to \infty} f(t_n) = f(\alpha)  \in (a,b) \cap \cl A$, and on the other hand, 
since $f(s_n) \in X \setminus A$ for all $n \in \mathbb{N}$, we get $\lim_{n \to \infty} f(s_n) =  f(\alpha)\in X \setminus \intt\,A$. 
Thus, $f(\alpha)\in (a,b)\cap \bd\, A$. 

In both cases, we get a contradiction to \eqref{condicion_frontera}.
\qed
\end{proof}

%---------------------------------------------------------
\subsection{Cones and their properties}
%---------------------------------------------------------

A cone  $C\subseteq X$ (i.e., a set $C\subseteq X$ with  $0_X \in C = \mathbb{R}_+ \cdot C$) is said to be non-trivial if $\{0_X\} \neq C \neq X$; pointed if $\ell(C) = \{ 0_X\}$, where $\ell(C) := C \cap (-C)$ is the lineality of $C$; acute if $\cl\,C$ is pointed; solid if ${\rm int}\,C \neq \emptyset$; convex if $C$ is a convex set (or equivalently, if $C+C = C$).   

\begin{definition} \label{def:top_base_cone}
Consider a cone $C \subseteq X$. A set $B \subseteq C$ is called a {base} for $C$, if
	$B$ is a non-empty set,  and $C = \mathbb{R}_+ \cdot B$ with $0_X \notin \cl\, B$.
    Moreover, $C$ is said to be well-based if there exists a bounded, convex base of $C$.
\end{definition}

\begin{remark} \label{rem:bases}
Assume that $B$ is a base in the sense of Definition \ref{def:top_base_cone} for the cone $C \subseteq X$. Then,
$\emptyset \neq C \setminus \{0_X\} = \mathbb{P} \cdot B$, 
and if $B$ is convex, then $C$ is non-trivial and convex.
\end{remark}

\begin{remark} \label{rem:bases_2}
Thanks to the Hahn-Banach theorem, a well-based cone \( C \) can be defined as one for which there exists a bounded convex subset \( B \) such that \( 0_X \not\in \cl\,B \) and, for every \( x \in C \) with \( x \neq 0_X \), there exist unique \( \lambda > 0 \) and \( b \in B \) such that \( x = \lambda b \) (this definition is used, for instance, in \cite{GarciaCastano2015}).
\end{remark}

\begin{definition} \label{def:normlikebase}
Consider a cone $C \subseteq X$. The set
$$
B_C := \{x \in C \mid ||x|| = 1\} = C \cap \mathbb{S}_X
$$
is called the \textbf{norm-base} of $C$.
\end{definition}

\begin{remark}
    Assume that $C \neq \{0_X\}$. Then, $B_{C}$ is a base (in the sense of the Definition \ref{def:top_base_cone}) for $C$, which is, as a subset of $\mathbb{B}_X$, bounded. Note, however, that $B_{C}$ is not necessarily a convex base. 
\end{remark}
    
In what follows, given the norm-base $B_C$ of a cone $C \subseteq X$, we will use the following two convex enlarged sets of $B_C$, 
$$
S_C := \conv(B_C) \quad \text{and} \quad S_C^0 := \conv(\{0_X\} \cup B_C).
$$

Some important relationships between the cones $\conv\, C$ and $\cl(\conv\, C)$ and the sets $S_C$, $S_C^0$, $\cl\, S_C$ and $\cl\, S_C^0$ are discussed in the next lemma.

\begin{lemma} \label{lem:clconvCandclSC}
    Assume that $C \subseteq X$ is a cone with $C \neq \{0_X\}$. Then, the following assertions are valid:
    \begin{itemize}
        \item[$1^\circ$] $\conv\, C = \mathbb{R}_+ \cdot S_C = \mathbb{R}_+ \cdot S_C^0$ and if further $0_X \notin \cl\, S_C$ or $C$ is convex, then $\cl(\conv\, C) = \mathbb{R}_+ \cdot \cl\, S_C = \mathbb{R}_+ \cdot \cl\, S_C^0$.
        \item[$2^\circ$] If $0_X \notin \cl\, S_C$, then $\cl\, S_C$ (respectively, $S_C$) is a bounded convex base for $\cl(\conv\, C)$ (respectively, for $\conv\, C$), and $\cl(\conv\, C) \setminus \{0_X\} = \mathbb{P} \cdot \cl\, S_C$.
    \end{itemize}
\end{lemma}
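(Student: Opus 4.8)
The plan is to reduce everything in $1^\circ$ to the convex set $S_C$ and its non-negative scalings, and only afterwards pass to closures, where the two hypotheses will enter. I would open with the elementary observations that $B_C \neq \emptyset$ (since $C \neq \{0_X\}$), that $C = \mathbb{R}_+ \cdot B_C$ (every nonzero $x \in C$ equals $\|x\| \cdot (x/\|x\|)$ with $x/\|x\| \in B_C$), and that $S_C, S_C^0 \subseteq \mathbb{B}_X$ are bounded (as $B_C \subseteq \mathbb{S}_X$ and $\mathbb{B}_X$ is convex). Two routine facts underpin the algebraic part: the convex hull of a cone is again a convex cone, and $\mathbb{R}_+ \cdot S$ is a convex cone whenever $S$ is convex.

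For the equalities $\conv\, C = \mathbb{R}_+ \cdot S_C = \mathbb{R}_+ \cdot S_C^0$, I would argue by mutual inclusion. Since $\mathbb{R}_+ \cdot S_C$ is a convex cone containing $C = \mathbb{R}_+ \cdot B_C$, it contains $\conv\, C$; conversely $S_C = \conv(B_C) \subseteq \conv\, C$ and $\conv\, C$ is a cone, so $\mathbb{R}_+ \cdot S_C \subseteq \conv\, C$. For the $S_C^0$ version I would use the identity $S_C^0 = [0,1]\cdot S_C$, valid because $\conv(\{0_X\}\cup S) = [0,1]\cdot S$ for any convex $S$, which immediately gives $\mathbb{R}_+ \cdot S_C^0 = \mathbb{R}_+ \cdot S_C$.

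The closure statement is the main obstacle, and it is where the two hypotheses are used. The inclusion $\mathbb{R}_+ \cdot \cl\, S_C \subseteq \cl(\conv\, C)$ (and likewise for $S_C^0$) always holds by continuity of scalar multiplication. For the reverse inclusion I would split into two cases. If $C$ is convex, then $\cl(\conv\, C) = \cl\, C$, and a direct normalization argument suffices: any nonzero $x \in \cl\, C$ is a limit of $c_n \in C \setminus \{0_X\}$ with $c_n/\|c_n\| \in B_C$ and $c_n/\|c_n\| \to x/\|x\|$, so $x/\|x\| \in \cl\, B_C \subseteq \cl\, S_C$ and $x \in \mathbb{R}_+ \cdot \cl\, S_C$. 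If instead $0_X \notin \cl\, S_C$, then $d := \operatorname{dist}(0_X, \cl\, S_C) > 0$, so $\|s\| \in [d,1]$ for all $s \in S_C$; given $x \neq 0_X$ with $\lambda_n s_n \to x$ ($\lambda_n \geq 0$, $s_n \in S_C$), the scalars $\lambda_n$ are bounded and bounded away from $0$, so along a subsequence $\lambda_{n_k} \to \lambda > 0$, forcing $s_{n_k} = \lambda_{n_k}^{-1}(\lambda_{n_k} s_{n_k}) \to x/\lambda \in \cl\, S_C$, whence $x \in \mathbb{R}_+ \cdot \cl\, S_C$. The technical care lies precisely in this boundedness-plus-subsequence extraction, which is exactly why one needs $0_X \notin \cl\, S_C$ here (in infinite dimensions $\cl\, S_C$ need not be compact). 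In either case the $S_C^0$ equality then follows from $S_C \subseteq S_C^0$ together with the already-noted $\mathbb{R}_+ \cdot \cl\, S_C^0 \subseteq \cl(\conv\, C) = \mathbb{R}_+ \cdot \cl\, S_C$.

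Finally, $2^\circ$ drops out of $1^\circ$. Under $0_X \notin \cl\, S_C$, the set $\cl\, S_C$ is non-empty, convex, bounded, satisfies $\cl(\conv\, C) = \mathbb{R}_+ \cdot \cl\, S_C$ by $1^\circ$, and obeys $0_X \notin \cl(\cl\, S_C) = \cl\, S_C$, so it is a bounded convex base for $\cl(\conv\, C)$ in the sense of Definition \ref{def:top_base_cone}; the same checks (using $\conv\, C = \mathbb{R}_+ \cdot S_C$) show that $S_C$ is a bounded convex base for $\conv\, C$. The remaining equality $\cl(\conv\, C)\setminus\{0_X\} = \mathbb{P}\cdot \cl\, S_C$ is then immediate from Remark \ref{rem:bases}.
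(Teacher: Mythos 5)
Your proof is correct, but it follows a genuinely different route from the paper's. The paper's own proof is largely citation-based: the identity $\conv C = \mathbb{R}_+ \cdot S_C$, the closure equality under $0_X \notin \cl\, S_C$, and the base statement in $2^\circ$ are all imported from \cite[Rem.~2.5]{gunther2023nonlinear_reflexive}, and the convex case with $0_X \in \cl\, S_C$ is settled by combining the identity $\cl\, S^0_{\cl\, C} = (\cl\, C) \cap \mathbb{B}_X$ from \cite[Th.~3.1~(2)]{gunther2023nonlinear_reflexive} with Lemma~\ref{lem:no_cl_vs_no_cl}~($2^\circ$), which gives $\cl\, S^0_{\cl\, C} = \cl\, S^0_C$ and hence $\cl\, C = \mathbb{R}_+ \cdot \cl\, S^0_C$. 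You instead prove everything from scratch: the algebraic identities via the facts that $\mathbb{R}_+ \cdot S$ is a convex cone for non-empty convex $S$ and that $S_C^0 = [0,1]\cdot S_C$; the closure equality under $0_X \notin \cl\, S_C$ via the distance bound $\|s\| \in [d,1]$ on $S_C$, which keeps the scalars $\lambda_n$ bounded and (eventually) bounded away from zero so that a convergent real subsequence can be extracted --- correctly, no compactness of $\cl\, S_C$ in $X$ is needed, only Bolzano--Weierstrass in $\mathbb{R}$; and the convex case via normalization $c_n/\|c_n\| \to x/\|x\|$, an argument that covers convex $C$ uniformly without distinguishing whether $0_X \in \cl\, S_C$, and which also handles $C = X$ (a case the paper sets aside as ``obvious''). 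Both proofs then deduce $2^\circ$ in the same way, by checking Definition~\ref{def:top_base_cone} against $1^\circ$ and invoking Remark~\ref{rem:bases}. What your approach buys is self-containedness and a cleaner case split; what the paper's approach buys is brevity and reuse of the $\cl\, S^0$-machinery that appears elsewhere in the paper.
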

\begin{proof} The result for the case $C = X$ is obvious. From \cite[Rem. 2.5]{gunther2023nonlinear_reflexive} we get $\conv\, C = \mathbb{R}_+ \cdot S_C$, and if further $0_X \notin \cl\, S_C$, then $\cl(\conv\, C) = \mathbb{R}_+ \cdot \cl\, S_C$.
Moreover, the first part of assertion $2^\circ$ is discussed in \cite[Rem. 2.5]{gunther2023nonlinear_reflexive}, while the second part $\cl(\conv\, C) \setminus \{0_X\} = \mathbb{P} \cdot \cl\, S_C$ follows from $0_X \notin \cl\, S_C$ and $\cl(\conv\, C)  = \mathbb{R}_+ \cdot \cl\, S_C$ (provided by $1^\circ$). 

Let us prove the remaining statements in $1^\circ$. 
Of course, $\conv\, C = \mathbb{R}_+ \cdot S_C \subseteq \mathbb{R}_+ \cdot S_C^0 \subseteq \conv\, C$. 
Consider two cases:

\textit{Case 1:} Let $0_X \notin \cl\, S_C$. Then,
$\cl(\conv\, C) = \mathbb{R}_+ \cdot \cl\, S_C \subseteq \mathbb{R}_+ \cdot \cl\, S_C^0 \subseteq \cl(\conv\, C)$, which means that $\cl(\conv\, C) = \mathbb{R}_+ \cdot \cl\,S_C = \mathbb{R}_+ \cdot \cl\,S_C^0$.

\textit{Case 2:} Let $0_X \in \cl\, S_C$ and $C$ be convex. By  \cite[Th. 3.1 (2)]{gunther2023nonlinear_reflexive} we have $\cl\,S_{{\rm cl}\, C}^0 = ({\rm cl}\, C) \cap \mathbb{B}_X$, while Lemma \ref{lem:no_cl_vs_no_cl} ($2^\circ$) ensures $\cl\,S_{{\rm cl}\, C}^0 = \cl\,S_{C}^0$. Consequently, we get 
${\rm  cl}\, C = \mathbb{R}_+ \cdot B_{{\rm  cl}\, C} \subseteq \mathbb{R}_+ \cdot (({\rm cl}\, C) \cap \mathbb{B}_X) = \mathbb{R}_+ \cdot \cl\,S_{{\rm cl}\, C}^0 = \mathbb{R}_+ \cdot \cl\,S_{C}^0 \subseteq \cl\,C,$
which means that $\cl\,C = \mathbb{R}_+ \cdot \cl\,S_C^0$. The remaining equality $\cl\,C = \mathbb{R}_+ \cdot \cl\,S_C$ follows from the fact that $0_X \in \cl\,S_C = \cl\,S_C^0$ (by \cite[Th. 3.1 (2)]{gunther2023nonlinear_reflexive}).

\qed
\end{proof}

Given a cone $C\subseteq X$, its dual cone is defined by 
$$C^+:=\{x^* \in X^*\mid \forall x \in C:\;x^*(x)\geq 0\}.$$ 
Furthermore, the subset 
$$C^{\#}:=\{x^* \in X^*\mid \forall x \in C \setminus \{0_X\}:\; x^*(x) > 0\}$$
is of interest. Obviously, both sets $C^+$ and $C^\#$ are convex for any (not necessarily convex) cone $C \subseteq X$, and if $C^\# \neq \emptyset$, then $C$ is pointed. Moreover, one has 
\begin{align}
  C^{+} & = (\conv\,C)^+ = (\cl(\conv\,C))^+,\notag\\
  (\cl(\conv\,C))^\#  \subseteq C^{\#} & = (\conv\,C)^\#, 
\label{eq:K+=convK+=clvonvK+}   
\end{align}
but the inclusion $(\cl(\conv\,C))^\# \subseteq C^{\#}$ can be strict (see G\"opfert et al.  \cite[p. 55]{GoeRiaTamZal2023}).

\begin{remark} \label{rem:clconvKwell-based} 
    Consider a non-trivial cone $C \subseteq X$ in a real normed space $X$.
    Taking into account \eqref{eq:K+=convK+=clvonvK+}, it is well-known (see \cite[Th. 3.6]{Chiang2012}, \cite[Prop. 2.2.23 and 2.2.32]{GoeRiaTamZal2023}, \cite[Sec. 2.2, Th. 3.1]{gunther2023nonlinear_reflexive}) that
    \begin{equation} \label{eq:KbasedKSharpneqEmpty}
    \conv\,C \text{ has a convex base} \iff  C^\# \neq \emptyset,\end{equation} 
    and  
    \begin{align} \label{eq:KwellbasedIntK+neqEmpty}
        \conv\,C \text{ is well-based} & \iff \cl(\conv\,C) \text{ is well-based}\notag\\
        & \iff  \intt\,C^{+}\neq\emptyset \iff \cor\,C^{+}\neq\emptyset  \\
        & \iff  0_X \notin \cl\, S_{\cl(\conv\,C)} \iff  0_X \notin \cl\, S_C \notag.
    \end{align} 
    In particular, for every $x^* \in C^{\#}$ the set $B :=\{x \in \conv\,C \mid x^*(x) = 1\}$ is a convex base for $\conv\,C$. 
    According to \cite[Th. 1.1]{GarciaCastano2015} (see also \cite{GarciaCastano2019}), we have further  
    \begin{align} 
        \conv\,C \text{ is well-based}&\iff  0_X \notin \cl(\conv((\conv\,C) \setminus \intt\, \mathbb{B}_X)) \notag\\
        & \;\;\qquad (\text{i.e., } 0_X \text{ is a denting point for } \conv\,C)
        \label{eq:denting_point}\\
        &\iff  \exists\, x^* \in X^* \colon\; 0<\inf _{x \in S_{\conv \,C}} x^*(x). \notag
    \end{align} 
    As an easy consequence of \eqref{eq:KbasedKSharpneqEmpty} and \eqref{eq:KwellbasedIntK+neqEmpty} we also get
    \begin{align*} 
        \conv\,C \text{ is well-based} & \;\Longrightarrow\;  \cl(\conv\,C) \text{ has a convex base}\notag\\
        & \iff (\cl(\conv\,C))^\# \neq \emptyset \label{eq:well-basedC_clC_pointed}\\
        & \;\Longrightarrow\;  \cl(\conv\,C) \text{ is pointed (i.e., } \conv\,C \text{ is acute}).\notag
    \end{align*}
    In addition, if $X$ has finite dimension and $C$ is closed, then $C^\# = \intt\,C^{+}$ (see \cite[Th. 2.1(4), Rem. 2.6]{gunther2023nonlinear_reflexive}), hence all the conditions involved in \eqref{eq:KbasedKSharpneqEmpty}, \eqref{eq:KwellbasedIntK+neqEmpty} and  \eqref{eq:denting_point} are equivalent.
\end{remark}

%---------------------------------------------------------
\subsection{Separation of convex sets / cones}
%---------------------------------------------------------

Let us present some results related to separation of convex sets / cones. The first proposition will be a key result for proving some main characterizations of symmetric and non-symmetric separation conditions in the upcoming Sections \ref{sec:sym_and_non-sym_separation} and \ref{sec:sym_and_non-sym_cone_separation}.

\begin{proposition}
    \label{prop:basic separation}
	Consider two non-empty convex sets $\Omega^1, \Omega^2 \subseteq X$. Consider the following assertions:
    \begin{itemize}
    \item[$1^\circ$] $0_X \notin \cl(\Omega^2 - \Omega^1)$.
    \item[$2^\circ$] There is $x^* \in X^* \setminus \{0_{X^*}\}$ such that
    $\sup_{x \in \Omega^1}\, x^*(x) < \inf_{y \in \Omega^2}\, x^*(y)$.
    \item[$3^\circ$]  $\Omega^1 \cap \Omega^2 = \emptyset$.
    \end{itemize}
    Then, $1^\circ \Longleftrightarrow 2^\circ \Longrightarrow 3^\circ$, and if further $\Omega^1$ and $\Omega^2$ are closed and one of these sets is weakly compact, then $3^\circ \Longrightarrow 2^\circ$.
\end{proposition}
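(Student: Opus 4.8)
The plan is to prove the chain $1^\circ \Longleftrightarrow 2^\circ \Longrightarrow 3^\circ$ first, and then establish $3^\circ \Longrightarrow 2^\circ$ under the additional closedness and weak-compactness hypotheses. The natural route is through the convex set $\Omega := \Omega^2 - \Omega^1$, which is convex (as a difference of convex sets) and non-empty. The whole statement then reduces to a separation-of-a-point-from-a-convex-set argument applied to $\Omega$ and the origin.

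I'd examine the equivalences. For $1^\circ \Longrightarrow 2^\circ$: if $0_X \notin \cl\,\Omega$, then since $\cl\,\Omega$ is a closed convex set and $\{0_X\}$ is a compact convex set disjoint from it, the strong (strict) Hahn-Banach separation theorem yields $x^* \in X^* \setminus \{0_{X^*}\}$ and a real $\gamma$ with $x^*(z) \geq \gamma > 0 = x^*(0_X)$ for all $z \in \cl\,\Omega$, hence $\inf_{z \in \Omega} x^*(z) > 0$; writing $z = y - x$ with $y \in \Omega^2$, $x \in \Omega^1$ and separating the infimum gives $\inf_{y} x^*(y) - \sup_{x} x^*(x) \geq \gamma > 0$, which is exactly $2^\circ$. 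For $2^\circ \Longrightarrow 1^\circ$: the strict gap $\sup_{\Omega^1} x^*(x) < \inf_{\Omega^2} x^*(y)$ forces $x^*(z) \geq \inf_{\Omega^2} x^* - \sup_{\Omega^1} x^* =: \delta > 0$ on all of $\Omega$, and since $x^*$ is continuous this inequality survives passage to the closure, so $x^*(0_X) = 0 < \delta$ shows $0_X \notin \cl\,\Omega$. For $2^\circ \Longrightarrow 3^\circ$: any common point of $\Omega^1$ and $\Omega^2$ would satisfy both $x^*(x) \leq \sup_{\Omega^1} x^*$ and $x^*(x) \geq \inf_{\Omega^2} x^*$, contradicting the strict inequality.

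For the reverse implication $3^\circ \Longrightarrow 2^\circ$ under the extra hypotheses, the key point is that $\Omega^1 \cap \Omega^2 = \emptyset$ translates to $0_X \notin \Omega = \Omega^2 - \Omega^1$, and I want to upgrade this to $0_X \notin \cl\,\Omega$ so as to invoke the already-proven $1^\circ \Longrightarrow 2^\circ$. This is where closedness plus weak compactness of one factor does the work: I'd show that if both $\Omega^1, \Omega^2$ are closed and (say) $\Omega^2$ is weakly compact, then $\Omega^2 - \Omega^1$ is closed. The standard argument takes a sequence $y_n - x_n \to w$ with $y_n \in \Omega^2$, $x_n \in \Omega^1$; by weak compactness of $\Omega^2$ a subnet (or, if $\Omega^2$ is weakly sequentially compact, a subsequence) of $(y_n)$ converges weakly to some $y \in \Omega^2$, whence $x_n = y_n - (y_n - x_n)$ converges weakly to $y - w$, and $x_n \in \Omega^1$ with $\Omega^1$ convex and closed (hence weakly closed) forces $y - w \in \Omega^1$, so $w = y - (y - w) \in \Omega^2 - \Omega^1 = \Omega$. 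Thus $\Omega$ is weakly closed, in particular norm-closed, giving $\cl\,\Omega = \Omega \not\ni 0_X$, i.e. $1^\circ$.

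The main obstacle is this closedness of the difference set $\Omega^2 - \Omega^1$ in a general normed (not necessarily reflexive or sequentially complete) space. I must be careful that weak compactness of $\Omega^2$ only guarantees convergent \emph{subnets}, not subsequences, so the cleanest formulation works with nets and uses that a norm-closed convex set is weakly closed (a consequence of Hahn-Banach, which is available). The subtlety to watch is that the limit $w$ is a \emph{norm} limit while the extracted convergence of $(y_n)$ is only \emph{weak}; I need $y_n - x_n \to w$ in norm to imply $y_n - x_n \rightharpoonup w$ weakly (true, since norm convergence implies weak convergence) so that the weak limits combine correctly. Once $\Omega$ is shown weakly closed, the rest is immediate from the first part.
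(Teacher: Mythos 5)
Your proposal is correct, and for the equivalence $1^\circ \Longleftrightarrow 2^\circ$ and the implication $2^\circ \Longrightarrow 3^\circ$ it coincides with the paper's proof: both separate $0_X$ from the closed convex set $\cl(\Omega^2-\Omega^1)$ and then split the resulting gap across the two factors (the paper makes explicit the small point that $\sup_{x\in\Omega^1}x^*(x)<\infty$, which your ``separating the infimum'' step tacitly uses; it follows from non-emptiness of $\Omega^2$, so this is cosmetic). The difference lies in $3^\circ \Longrightarrow 2^\circ$: the paper simply cites the classical strict linear separation theorem for two disjoint closed convex sets, one weakly compact, whereas you prove that result from scratch --- you show that $\Omega^2-\Omega^1$ is weakly closed when both sets are closed and one is weakly compact (via a subnet extraction plus the fact that norm-closed convex sets are weakly closed), thereby upgrading $3^\circ$ to $1^\circ$ and closing the loop through the already-proven implication $1^\circ \Longrightarrow 2^\circ$. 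This is precisely the standard proof of the theorem the paper invokes, so the mathematical content is the same; what your route buys is self-containedness and a cleaner logical structure ($3^\circ \Rightarrow 1^\circ \Rightarrow 2^\circ$), and your care with subnets rather than subsequences is exactly right in a general normed space, where weak compactness does not obviously yield sequential extraction without appealing to Eberlein--\v{S}mulian in the completion. What the paper's citation buys is brevity.
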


\begin{proof} $1^\circ \Longrightarrow 2^\circ$: Assume that $1^\circ$ is valid. By classical linear separation of $0_X$ from the non-empty, closed, convex set $\cl(\Omega^2 - \Omega^1)$, there is $x^* \in X^* \setminus \{0_X\}$ such that
$$0 = x^*(0_X) < \gamma := \inf_{z \in \cl(\Omega^2 - \Omega^1)}\, x^*(z) \leq x^*(y) - x^*(x)$$
for all $y \in \Omega^2$ and $x \in \Omega^1$. In particular, noting that the latter formula yields $\sup_{x \in \Omega^1}\,x^*(x) < \infty$, we derive
$$0 < \gamma \leq x^*(y) - \sup_{x \in \Omega^1}\,x^*(x),$$
hence
$$
\sup_{x \in \Omega^1}\,x^*(x) <  \gamma  +  \sup_{x \in \Omega^1}\,x^*(x) \leq x^*(y) \quad \text{for all } y \in \Omega^2.
$$
Thus, $\sup_{x \in \Omega^1}\,x^*(x) <  \gamma  +  \sup_{x \in \Omega^1}\,x^*(x) \leq \inf_{y \in \Omega^2}\, x^*(y)$, which shows $2^\circ$ is true.

\par

$2^\circ \Longrightarrow 1^\circ$: 
Assume that $2^\circ$ is valid. Then, for all $y \in \Omega^2$ and $x \in \Omega^1$, we have
$$
0 <  \gamma := \inf_{y \in \Omega^2}\, x^*(y) - \sup_{x \in \Omega^1}\,x^*(x) \leq x^*(y) - x^*(x) = x^*(y -  x),
$$
hence, for all $z \in \cl(\Omega^2 - \Omega^1)$,
$$
0 < \gamma \leq x^*(z). 
$$
Of course, this shows that $0_X \notin \cl(\Omega^2 - \Omega^1)$.

\par 
Of course, $2^\circ \Longrightarrow 3^\circ$ is valid. If $\Omega^1$ and $\Omega^2$ are closed (hence weakly closed) and one of these sets is weakly compact, then the classical strict linear separation result for convex sets yields $3^\circ \Longrightarrow 2^\circ$.
    \qed
\end{proof}

\begin{remark} \label{rmk: equivalent_condition} 
Note, for any non-empty sets $\Omega^1, \Omega^2 \subseteq X$, we have
$$
0_X \notin \cl(\cl\,\Omega^2 - \cl\,\Omega^1) \iff 0_X \notin \cl(\Omega^2 - \Omega^1)  \iff d(\Omega^1, \Omega^2) > 0,
$$
where $d(\Omega^1, \Omega^2)$ is the classical distance between the sets $\Omega^1$ and $\Omega^2$, which is defined by 
$$
d(\Omega^1,\Omega^2) := \inf\{||x - y|| \mid x \in \Omega^1, y \in \Omega^2\}.
$$
\end{remark}

The following proposition recalls a known linear separation result for closed, convex cones (see \cite[Prop. 2.2]{gunther2023nonlinear_reflexive} and also \cite[Th. 3.22]{Jahn2011}).

\begin{proposition}[{\cite[Prop. 2.2]{gunther2023nonlinear_reflexive}}] \label{prop:linear_cone_separation}
	Suppose that $C, K \subseteq X$ are non-trivial, closed, convex cones and $\cl\,S_C$ is weakly compact with $0_X \notin \cl\,S_C$. If $C \cap K = \{0_X\}$, then there is $x^* \in X^* \setminus \{0_{X^*}\}$ such that 
    \begin{equation}
        \label{eq:sep_lin_cones}
        x^*(k) \geq 0 > x^*(c) \quad \text{for all } k \in K \mbox{ and  }c \in C \setminus \{0_X\}.
    \end{equation}
\end{proposition}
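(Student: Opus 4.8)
The plan is to separate not the cone $C$ itself from $K$, but its weakly compact convex base $\cl\,S_C$ from $K$, and then to exploit the cone (positive-homogeneity) structure of both sets in order to upgrade the resulting affine separation into the homogeneous level-$0$ separation \eqref{eq:sep_lin_cones}. The reason for passing to $\cl\,S_C$ is that $C$ is typically unbounded, so Proposition \ref{prop:basic separation} cannot be applied to $C$ and $K$ directly, whereas $\cl\,S_C$ is weakly compact by hypothesis.

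First I would record the relevant facts about $\cl\,S_C$. Since $C$ is closed and convex, $\cl(\conv\,C) = C$, so Lemma \ref{lem:clconvCandclSC} ($2^\circ$), applicable because $0_X \notin \cl\,S_C$, tells us that $\cl\,S_C$ is a bounded convex base for $C$ with $C \setminus \{0_X\} = \mathbb{P} \cdot \cl\,S_C$, and in particular $\cl\,S_C \subseteq C$. Next I would verify the disjointness $\cl\,S_C \cap K = \emptyset$: any common point $b$ would satisfy $b \in C \cap K$ and $b \neq 0_X$ (because $0_X \notin \cl\,S_C$), contradicting $C \cap K = \{0_X\}$.

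Now $\cl\,S_C$ and $K$ are disjoint, closed, convex sets, the first of which is weakly compact, so Proposition \ref{prop:basic separation} ($3^\circ \Longrightarrow 2^\circ$) yields some $x^* \in X^* \setminus \{0_{X^*}\}$ with $\sup_{b \in \cl\,S_C} x^*(b) < \inf_{k \in K} x^*(k)$. The final step is to convert this into \eqref{eq:sep_lin_cones}. Since $K$ is a cone, $\inf_{k \in K} x^*(k) \in \{0, -\infty\}$; as this infimum is bounded below by the finite number $\sup_{b \in \cl\,S_C} x^*(b)$, it must equal $0$, which gives $x^*(k) \geq 0$ for all $k \in K$. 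Consequently $\sup_{b \in \cl\,S_C} x^*(b) < 0$, and using $C \setminus \{0_X\} = \mathbb{P} \cdot \cl\,S_C$ together with the positive homogeneity of $x^*$, we obtain $x^*(c) < 0$ for all $c \in C \setminus \{0_X\}$, as required.

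The hard part is conceptual rather than computational: recognizing that the hypothesis ``$\cl\,S_C$ weakly compact with $0_X \notin \cl\,S_C$'' is exactly what is needed both to apply the strict separation result (via Proposition \ref{prop:basic separation}) to a bounded set and to guarantee that the separating base is disjoint from $K$, and then handling the cone-homogeneity argument that pins the infimum over $K$ to $0$ and transfers strictness from the base to all of $C \setminus \{0_X\}$. No delicate estimates are involved once the correct compact convex base has been identified.
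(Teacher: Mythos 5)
Your proof is correct. Note first that the paper itself contains no proof of Proposition \ref{prop:linear_cone_separation}: it is simply recalled from the literature with the citation \cite[Prop. 2.2]{gunther2023nonlinear_reflexive}, so there is no in-paper argument to compare against. Your derivation is nonetheless the natural self-contained one, and it is assembled entirely from the paper's own toolkit: Lemma \ref{lem:clconvCandclSC} ($2^\circ$) identifies $\cl\,S_C$ as a bounded convex base of $C = \cl(\conv\,C)$ with $C \setminus \{0_X\} = \mathbb{P}\cdot\cl\,S_C$; the hypotheses $0_X \notin \cl\,S_C$ and $C \cap K = \{0_X\}$ give $(\cl\,S_C) \cap K = \emptyset$; Proposition \ref{prop:basic separation} ($3^\circ \Longrightarrow 2^\circ$, using the weak compactness of $\cl\,S_C$ and the fact that the closed convex set $K$ is weakly closed) yields the strict separation of $\cl\,S_C$ from $K$; and your homogeneity argument correctly pins $\inf_{k \in K} x^*(k)$ to $0$ (it lies in $\{0, -\infty\}$ and is bounded below by a finite supremum, finite because $\cl\,S_C \subseteq \mathbb{B}_X$) and transfers strict negativity from the base to all of $C \setminus \{0_X\}$. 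One observation worth recording: an alternative route using only results stated later in the paper also works, namely Proposition \ref{prop:basic separation_BP_a} ($5^\circ \Longrightarrow 3^\circ$ for convex $K$, then $3^\circ \Longrightarrow 1^\circ$ under weak compactness of $\cl\,S_C$) shows the hypotheses imply $0_X \notin \cl(S_C - S_K^0)$, after which Proposition \ref{prop:basic separation_BP_b} ($1^\circ \Longrightarrow 2^\circ$) delivers exactly \eqref{eq:sep_lin_cones}; this is precisely the extension relationship noted in the remark following Proposition \ref{prop:basic separation_BP_b}. Your direct argument is shorter and avoids that machinery, at the cost of being limited to the closed convex case, which is all the proposition claims.
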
 

The classical weak linear separation for convex cones (in the sense of Eidelheit) is formulated in the next proposition.

\begin{proposition}
    \label{prop:linear_cone_separation_weak}
	Suppose that $C, K \subseteq X$ are convex cones, and $\intt\,C \neq \emptyset$. 
    If $(\intt\,C) \cap K = \emptyset$, then there is $x^* \in X^* \setminus \{0_{X^*}\}$ such that 
    \begin{equation*}
        \label{eq:sep_lin_cones_weak}
        x^*(k) \geq 0 > x^*(c) \quad \text{for all } k \in K \mbox{ and  }c \in \intt\,C.
    \end{equation*}
\end{proposition}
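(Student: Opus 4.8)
The plan is to obtain $x^*$ from the classical Eidelheit (geometric Hahn--Banach) separation theorem and then to upgrade the resulting weak inequalities to the desired sign conditions by exploiting that $K$ is a cone. First I would point out that the basic separation result in Proposition~\ref{prop:basic separation} cannot be invoked directly: we are only given $(\intt\,C)\cap K=\emptyset$, with no compactness and no control on the distance $d(\intt\,C,K)$, which could well be zero (the two cones may share a common boundary ray, so that $0_X\in\cl(K-\intt\,C)$). What makes the statement true without any such extra hypothesis is precisely that $\intt\,C$ is \emph{open}, and this is exactly the situation covered by Eidelheit's theorem.

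So the first step is to observe that $\intt\,C$ is a non-empty open convex set (the interior of a convex set is convex), that $K$ is convex, and that $(\intt\,C)\cap K=\emptyset$ by assumption. The geometric Hahn--Banach theorem for an open convex set disjoint from a convex set then yields some $x^*\in X^*\setminus\{0_{X^*}\}$ and $\gamma\in\RR$ with
$$x^*(c)<\gamma\le x^*(k)\qquad\text{for all }c\in\intt\,C,\ k\in K,$$
the strict inequality on the left being forced by openness of $\intt\,C$ (if equality held at some interior point, a nearby point would violate $x^*\le\gamma$).

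The second step converts these into the claimed signs using that $K$ is a convex cone. Since $0_X\in K$ we get $\gamma\le x^*(0_X)=0$. Moreover, for fixed $k\in K$ and $t>0$ we have $tk\in K$, hence $\gamma\le t\,x^*(k)$; letting $t\to\infty$ rules out $x^*(k)<0$, so $x^*(k)\ge 0$ for every $k\in K$ (equivalently, $\inf_{k\in K}x^*(k)\in\{0,-\infty\}$ is bounded below by $\gamma$, hence equals $0$). Combining with $x^*(c)<\gamma\le 0$ gives $x^*(c)<0$ for all $c\in\intt\,C$, and therefore $x^*(k)\ge 0>x^*(c)$ as required.

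The only genuinely delicate point is the very first inequality: strict separation of $\intt\,C$ from $K$ with neither a compactness nor a positive-distance assumption. This is not accessible through Proposition~\ref{prop:basic separation} and must instead rest on the openness of $\intt\,C$ via the Eidelheit form of Hahn--Banach; everything afterwards is elementary cone bookkeeping.
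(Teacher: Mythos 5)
Your proof is correct. The paper states this proposition without any proof, explicitly presenting it as "the classical weak linear separation for convex cones (in the sense of Eidelheit)"; your argument --- applying Eidelheit's geometric Hahn--Banach theorem to the open convex set $\intt\,C$ and the convex set $K$, then using $0_X \in K$ and positive homogeneity of $K$ to force $\gamma \leq 0$ and $x^*(k) \geq 0$ for all $k \in K$ --- is precisely the standard derivation the paper is implicitly invoking, and your side remark that Proposition \ref{prop:basic separation} is inapplicable here (no positive distance, no weak compactness) is also accurate.
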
 

%---------------------------------------------------------
\subsection{Augmented dual cones and Bishop-Phelps cones} \label{sec:ADCandBPC}
%---------------------------------------------------------

Given a cone $C \subseteq X$ with $C \neq \{0_X\}$ and the norm-base $B_C$, the so-called augmented dual cone of $C$, introduced by Kasimbeyli (Gasimov) in \cite{Gasimov}, \cite{Kasimbeyli2010}, is defined by
$$
C^{a+}  :=\{(x^*,\alpha)\in C^{+}\times \RR_+ \mid  \forall x \in C:\; x^*(x)-\alpha\| x \| \geq 0\}.
$$
Moreover, we consider the following subsets of $C^{a+}$,
\begin{align*}
    C^{a\#} & :=\{(x^*,\alpha)\in C^{\#}\times \RR_+ \mid \forall x \in C \setminus \{0_X\}:\;  x^*(x)-\alpha\| x \| > 0\},\\
    C^{aw\#} & :=\{(x^*,\alpha)\in C^{\#}\times \RR_+ \mid \forall x \in \cl_w B_C:\;  x^*(x) > \alpha\}.
\end{align*}
Clearly, we have $C^{aw\#}\subseteq C^{a\#} \subseteq C^{a+}$ and
\begin{align*}
    C^{a+} & =\{(x^*,\alpha)\in C^{+}\times \RR_+ \mid C \subseteq C(x^*, \alpha)\} \\
    & = \{(x^*,\alpha)\in C^{+}\times \RR_+ \mid \forall x \in B_C:\;  x^*(x) \geq \alpha\},\\
    C^{a\#} & =\{(x^*,\alpha)\in C^{\#}\times \RR_+ \mid C \setminus \{0_X\} \subseteq C^>(x^*, \alpha)\} \\
    & = \{(x^*,\alpha)\in C^{\#}\times \RR_+ \mid \forall x \in B_C:\;  x^*(x) > \alpha\}.
\end{align*}
In addition, we have 
\begin{align}
    \label{eq:clconvCa+=Ca+}
    C^{a+} =(\cl C)^{a+}& = (\conv\,C)^{a+} = (\cl(\conv\,C))^{a+},\notag\\
    (\cl(\conv\,C))^{a\#}  \subseteq C^{a\#}& = (\conv\,C)^{a\#}.
\end{align} 
If $(x^*,\alpha) \in C^{a\#} \cap (X^* \times \mathbb{P})$ and $||x^*||_* > \alpha > 0$, then $C(x^*, \alpha) \in \mathcal{C}_{{\rm BP}}$.

\medskip
In the following proposition, we will examine some properties of the augmented dual cone $C^{a+}$ involving its subsets $C^{aw\#}$, $C^{a\#}$, and ${\rm cor}\,C^{a+}$.

\begin{proposition}[{cf. \cite{gunther2023nonlinear_reflexive}, \cite{GarciaCastano2015,GarciaCastano2024}}] \label{prop:properties_augmented_dual_cone}
	For any cone $C\subseteq X$ with $C \neq \{0_X\}$ the following assertions hold:
    \begin{itemize}
    \item[$1^\circ$] $C^{aw\#} \cap (X^* \times \mathbb{P})\neq \emptyset  \iff C^{a\#} \cap (X^* \times \mathbb{P})\neq \emptyset \iff C^{a+} \cap (X^* \times \mathbb{P}) \neq \emptyset \\
    \iff 0_X \notin {\rm cl}\,S_C$.
    \item[$2^\circ$] 
        $\cor\, C^{a+} = \{(x^*, \alpha) \in C^+ \times \mathbb{P} \mid \inf_{x  \in B_C}\,  x^*(x) > \alpha \} \subseteq C^{aw\#} \cap (X^*\times \mathbb{P})$.
    \item[$3^\circ$] 
        If $\cl_w\,B_C$ is weakly compact, then
        $\cor\, C^{a+} = C^{aw\#} \cap (X^*\times \mathbb{P})$, and \\
        $C^{aw\#} \neq \emptyset \iff \cor\, C^{a+} \neq \emptyset \iff 0_X \notin \cl\,S_C$.
    \item[$4^\circ$] 
        If $B_{C}$ is weakly compact, then 
        $\cor\, C^{a+} =  C^{a\#} \cap (X^*\times \mathbb{P})$.	
    \end{itemize}
\end{proposition}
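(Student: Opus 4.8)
The plan is to reduce all four assertions to an analysis of the algebraic interior $\cor C^{a+}$ together with the linear separation of $0_X$ from $\cl S_C$. Throughout I would work with the reformulations $C^{a+} = \{(x^*,\alpha) \in C^+ \times \RR_+ \mid \forall x \in B_C:\, x^*(x) \geq \alpha\}$ and $C^{a\#} = \{(x^*,\alpha) \in C^\# \times \RR_+ \mid \forall x \in B_C:\, x^*(x) > \alpha\}$ recorded before the statement, and I would repeatedly use that a bounded linear functional $x^*$ satisfies $\inf_{s \in S_C} x^*(s) = \inf_{x \in B_C} x^*(x)$ (the infimum over a convex hull equals the infimum over its generators) and is weakly continuous, so that any bound $x^*(x) \geq \beta$ valid on $B_C$ passes to $\cl_w B_C$.

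For $1^\circ$, the inclusions $C^{aw\#} \subseteq C^{a\#} \subseteq C^{a+}$ give the forward chain of nonemptiness. To close the loop I would argue: if $(x^*,\alpha) \in C^{a+}$ with $\alpha > 0$, then $x^*(s) \geq \alpha > 0$ for all $s \in S_C$, and hence by continuity for all $s \in \cl S_C$, so $0_X \notin \cl S_C$. Conversely, if $0_X \notin \cl S_C$, I would separate $0_X$ from $S_C$ using Proposition \ref{prop:basic separation} ($1^\circ \Rightarrow 2^\circ$, with $\Omega^1 = \{0_X\}$ and $\Omega^2 = S_C$; see also Remark \ref{rmk: equivalent_condition}) to obtain $x^*$ with $\beta := \inf_{x \in B_C} x^*(x) > 0$; then $(x^*, \beta/2)$ lies in $C^{aw\#} \cap (X^* \times \mathbb{P})$, because $x^*(x) \geq \beta > \beta/2$ on $B_C$ (hence on $\cl_w B_C$) forces both $x^* \in C^\#$ and the defining inequality.

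For $2^\circ$, write $M := \{(x^*,\alpha) \in C^+ \times \mathbb{P} \mid \inf_{x \in B_C} x^*(x) > \alpha\}$. The inclusion $M \subseteq C^{aw\#} \cap (X^* \times \mathbb{P})$ is the same computation as above. For $\cor C^{a+} \subseteq M$ I would test with the two directions $(0,\pm 1)$: moving $\alpha$ downward while staying in $C^{a+} \subseteq X^* \times \RR_+$ forces $\alpha > 0$, while moving $\alpha$ upward forces $\inf_{x \in B_C} x^*(x) > \alpha$. The main obstacle is the reverse inclusion $M \subseteq \cor C^{a+}$: given $(x^*,\alpha) \in M$ with $\varepsilon := \inf_{x\in B_C} x^*(x) - \alpha > 0$ and an arbitrary nonzero direction $(y^*,\gamma) \in X^* \times \RR$, I must produce a single $\delta > 0$ that works uniformly over all $x \in B_C$. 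This is exactly where boundedness enters: since $B_C \subseteq \mathbb{B}_X$, one has $y^*(x) - \gamma \geq -L$ with $L := \|y^*\|_* + |\gamma| > 0$, so $x^*(x) + t\,y^*(x) - (\alpha + t\gamma) \geq \varepsilon - tL \geq 0$ for $t \in [0, \varepsilon/L]$; shrinking $\delta$ further to keep $\alpha + t\gamma \geq 0$ then gives $(x^* + ty^*, \alpha + t\gamma) \in C^{a+}$ for all small $t \geq 0$, i.e. $(x^*,\alpha) \in \cor C^{a+}$.

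Finally, $3^\circ$ and $4^\circ$ leverage weak compactness to upgrade ``strict on $\cl_w B_C$'' into ``$\inf$ strictly above $\alpha$''. By $2^\circ$ it remains to show $C^{aw\#} \cap (X^* \times \mathbb{P}) \subseteq M$: if $\cl_w B_C$ is weakly compact, the weakly continuous $x^*$ attains its minimum on $\cl_w B_C$, whence $\inf_{x\in B_C} x^*(x) \geq \min_{x\in\cl_w B_C} x^*(x) > \alpha$ and $(x^*,\alpha) \in M$. For the equivalences in $3^\circ$, the same attainment argument lets any $(x^*,0) \in C^{aw\#}$ be perturbed to $(x^*, \tfrac12 \min_{x\in\cl_w B_C} x^*(x)) \in C^{aw\#} \cap (X^* \times \mathbb{P})$, so $C^{aw\#} \neq \emptyset \iff C^{aw\#} \cap (X^* \times \mathbb{P}) \neq \emptyset$; chaining this with $\cor C^{a+} = C^{aw\#} \cap (X^* \times \mathbb{P})$ and the equivalence $C^{a+} \cap (X^* \times \mathbb{P}) \neq \emptyset \iff 0_X \notin \cl S_C$ from $1^\circ$ closes it. For $4^\circ$, weak compactness of $B_C$ gives $\cl_w B_C = B_C$, whence $C^{aw\#} = C^{a\#}$ directly from the definitions, and $3^\circ$ immediately yields $\cor C^{a+} = C^{a\#} \cap (X^* \times \mathbb{P})$.
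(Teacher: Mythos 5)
Your proof is correct, but it takes a genuinely different route from the paper. The paper's own proof is essentially a proof by citation: it first disposes of the degenerate case $\{0_X\} \neq \cl(\conv C) = X$ by hand (there $C^+ = \{0_{X^*}\}$, $C^{a+} = \{(0_{X^*},0)\}$, all the sets in question are empty, and $0_X \in \cl\,S_C$, so $1^\circ$--$4^\circ$ hold trivially), and then, for non-trivial $C$, invokes Theorems 3.1 and 3.2 of \cite{gunther2023nonlinear_reflexive} for assertions $1^\circ$ and $2^\circ$; the only original ingredient is the observation that the upper bound $C^{a\#} \cap (X^* \times \mathbb{P})$ for $\cor C^{a+}$ stated in the cited theorem can be upgraded to $C^{aw\#} \cap (X^* \times \mathbb{P})$ because $\inf_{x \in B_C} x^*(x) = \inf_{x \in \cl_w B_C} x^*(x)$. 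You instead prove everything from scratch: Proposition \ref{prop:basic separation} applied to $\Omega^1 = \{0_X\}$, $\Omega^2 = S_C$ gives the separation step in $1^\circ$; direction-testing with $(0,\pm 1)$ yields $\cor C^{a+} \subseteq M$, and the uniform bound $y^*(x) - \gamma \geq -(\|y^*\|_* + |\gamma|)$ on $B_C \subseteq \mathbb{S}_X$ yields $M \subseteq \cor C^{a+}$, which together establish $2^\circ$; and weak-compactness/attainment arguments handle $3^\circ$ and $4^\circ$, including the identity $\cl_w B_C = B_C$ (hence $C^{aw\#} = C^{a\#}$) when $B_C$ is weakly compact. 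All of these steps check out, and your uniform treatment even removes the need for the paper's case split, since each implication is proved directly and holds vacuously in the degenerate case. What the paper's route buys is brevity and traceability to the literature; what yours buys is a self-contained argument that makes visible exactly where each hypothesis enters --- norm-boundedness of $B_C$ for the core computation, weak compactness for identifying $\cor C^{a+}$ with $C^{aw\#} \cap (X^* \times \mathbb{P})$ --- details a reader of this paper would otherwise have to chase through the cited references.
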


\begin{proof} 
    If $\{0_X\} \neq \cl(\conv\,C) = X$, 
    then $C^{+} = \{0_{X^*}\}$, $C^\# = \emptyset$, $\cor\, C^{+} = \intt\, C^{+} = \emptyset$, $C^{a+} = \{(0_{X^*}, 0)\}$, $\cor\, C^{a+} = C^{a\#} = C^{aw\#} = \emptyset$ and $0_X \in \cl\,S_C$, hence $1^\circ-4^\circ$ are valid. 
    
    Now, assume that $C$ is non-trivial.    
    Then, the assertion $1^\circ$ follows from \cite[Th. 3.1]{gunther2023nonlinear_reflexive} (see also \cite[Th. 1.1]{GarciaCastano2015}, \cite[Lem.3.7]{GarciaCastano2024}), while the assertion $2^\circ$ follows from \cite[Th. 3.2]{gunther2023nonlinear_reflexive}.
    Note that in assertion $2^\circ$ of Proposition \ref{prop:properties_augmented_dual_cone} we write an upper bound $C^{aw\#} \cap (X^*\times \mathbb{P})$ for $\cor\, C^{a+}$ instead of $C^{a\#} \cap (X^*\times \mathbb{P})$ (which was stated in \cite[Th. 3.2]{gunther2023nonlinear_reflexive}). This is possible because the second condition in $2^\circ$ follows directly from the first condition (equality) in $2^\circ$ and the fact that $\inf_{x \in B_C} x^*(x) = \inf_{x \in \cl_w\,B_C} x^*(x)$. \qed
\end{proof}

\begin{lemma} \label{lem:no_cl_vs_no_cl}
For any cone $C\subseteq X$ the following assertions hold:
\begin{itemize}
    \item[$1^\circ$] 
    $\cl\, S_{\cl\,C} = \cl\,S_C$.
    \item[$2^\circ$] 
    $\cl\, S_{\cl\,C}^0 = \cl\,S_C^0$.
\end{itemize}
\end{lemma}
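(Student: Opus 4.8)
The plan is to prove both equalities by establishing two inclusions each, where one direction is immediate from monotonicity and the other rests on a single normalization argument. First I would dispose of the easy inclusions and the degenerate case. Since $C \subseteq \cl\,C$, the norm-bases satisfy $B_C = C \cap \mathbb{S}_X \subseteq (\cl\,C) \cap \mathbb{S}_X = B_{\cl\,C}$, and taking convex hulls gives $S_C \subseteq S_{\cl\,C}$ and $S_C^0 \subseteq S_{\cl\,C}^0$; passing to closures yields $\cl\,S_C \subseteq \cl\,S_{\cl\,C}$ and $\cl\,S_C^0 \subseteq \cl\,S_{\cl\,C}^0$. In the degenerate case $C = \{0_X\}$ we have $B_C = B_{\cl\,C} = \emptyset$, so the four sets reduce to $\emptyset$ or $\{0_X\}$ and both identities hold trivially; hence I may assume $C \neq \{0_X\}$.

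The heart of the proof is the claim $B_{\cl\,C} \subseteq \cl\,B_C$. To see this, fix $x \in B_{\cl\,C}$, so $x \in \cl\,C$ with $||x|| = 1$, and pick a sequence $(x_n)_n \subseteq C$ with $x_n \to x$. Since $||x|| = 1 \neq 0$ and $||\cdot||$ is continuous, we have $x_n \neq 0_X$ for all large $n$; for such $n$ set $y_n := x_n/||x_n||$. Because $C$ is a cone and $1/||x_n|| > 0$, we get $y_n \in C$, and clearly $||y_n|| = 1$, so $y_n \in B_C$. Continuity of the normalization map away from the origin gives $y_n \to x/||x|| = x$, whence $x \in \cl\,B_C$. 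This proves the claim.

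Finally I would convert the claim into the two remaining inclusions. For $1^\circ$, since $B_C \subseteq S_C$ we have $\cl\,B_C \subseteq \cl\,S_C$, and $\cl\,S_C$ is closed and convex (the closure of a convex set being convex); combined with the claim this gives $B_{\cl\,C} \subseteq \cl\,S_C$, hence $S_{\cl\,C} = \conv(B_{\cl\,C}) \subseteq \cl\,S_C$ and therefore $\cl\,S_{\cl\,C} \subseteq \cl\,S_C$, which together with the monotone inclusion yields $1^\circ$. For $2^\circ$, the set $\cl\,S_C^0$ is closed, convex, and contains both $0_X$ and $B_{\cl\,C}$ (the latter again by the claim, using $B_C \subseteq S_C^0$), so it contains $\conv(\{0_X\} \cup B_{\cl\,C}) = S_{\cl\,C}^0$, giving $\cl\,S_{\cl\,C}^0 \subseteq \cl\,S_C^0$ and hence $2^\circ$.

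I expect the only genuine obstacle to be the normalization argument for $B_{\cl\,C} \subseteq \cl\,B_C$; the remaining steps are routine manipulations with convex hulls and closures. The subtle point is that $x_n \mapsto x_n/||x_n||$ is defined and continuous only away from the origin, which is exactly why the normalization $||x|| = 1$ (forcing $x_n \neq 0_X$ eventually) is essential.
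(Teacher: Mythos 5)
Your proof is correct and takes essentially the same approach as the paper's: both hinge on the normalization argument establishing $B_{\cl\,C} \subseteq \cl\,B_C$, followed by the same convex-hull/closure manipulations using the convexity of $\cl\,S_C$ and $\cl\,S_C^0$. The only cosmetic difference is that the paper splits $B_{\cl\,C} = B_C \cup B_{\bd\,C}$ and normalizes a sequence in $C \setminus \{0_X\}$ converging to a point of $B_{\bd\,C}$, whereas you treat all of $B_{\cl\,C}$ uniformly by noting the approximating sequence is eventually nonzero.
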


\begin{proof} The case that $C$ is closed (e.g. $C = \{0_X\}$ or $C = X$) is obvious. Now, assume that $C$ is non-trivial.

$1^\circ$. Let us first prove that $B_{\cl\,C} \subseteq \cl\,B_{C}$. 
Indeed, since $B_{\cl\, C}=B_C \cup B_{\bd \,C}$, 
and $B_C \subseteq \cl\,B_{C}$, we just need to prove that $B_{\bd \,C} \subseteq \cl\,B_{C}$.  Take $x_0 \in B_{\bd \, C}=\mathbb{S}_X \cap \bd \, C$. Then, $x_0=\lim_{n \to \infty} x_n$ for some sequence $(x_n)_n\subseteq C \setminus \{0_X\}$. 
Note that $\lim_{n \to \infty} \|x_n\| = \|x_0\| = 1$. Define $u_n\ :=\frac{x_n}{\|x_n\|}\in \mathbb{S}_X \cap C  = B_{C}$ for all $n\in\NN$. Then, $\lim_{n \to \infty} u_n = \frac{x_0}{\|x_0\|} = x_0$, and consequently $x_0 \in \cl\,B_{C}$.  
Therefore, $B_{\cl \,C} \subseteq \cl\,B_{C} \subseteq \cl(\conv\,B_{C}) = \cl\,S_{C}$. By the convexity of $\cl\,S_{C}$ we get 
$S_{\cl\,C} = \conv\,B_{\cl \,C} \subseteq \cl\,S_C$. 
Moreover, from $S_{\cl\,C} \subseteq \cl\,S_C$ we also get $\cl\, S_{\cl\,C} \subseteq \cl\,S_C$. Noting that $\cl\, S_{\cl\,C} \supseteq \cl\,S_C$ is obvious, we conclude $\cl\, S_{\cl\,C} = \cl\,S_C$.

$2^\circ$. Taking into account that $B_{\cl \,C} \subseteq \cl\,B_{C}$, we get 
$B_{\cl \,C} \cup \{0_X\} \subseteq (\cl\,B_{C}) \cup \{0_X\} = \cl(B_{C} \cup \{0_X\}) \subseteq \cl(\conv(B_{C} \cup \{0_X\})) = \cl \,S_C^0$. Then, by the convexity of $\cl \,S_C^0$, we infer 
$S_{\cl\,C}^0 = \conv(B_{\cl \,C} \cup \, \{0_X\}) \subseteq \cl\, S_C^0$. 
Finally, $\cl\, S_{\cl\,C}^0 = \cl\,S_C^0$ follows immediately.
\qed
\end{proof}

In the next lemma we state some more properties of the closed cones $C(x^*,\alpha)$ (with $(x^*, \alpha) \in X^*\times \mathbb{R}$) that we introduced in Section \ref{sec:introduction}.

\begin{lemma} \label{lem:properties_BP}
    Consider any $(x^*, \alpha) \in (X^*\setminus \{0_{X^*}\}) \times \mathbb{R}$. Then, we have:
    \begin{itemize}
        \item[$1^\circ$] The following equalities are valid:
        \begin{align*}
    C(x^*,\alpha) & = -S(x^*,\alpha) = S(-x^*,\alpha) = \{x \in X \mid \varphi_{-x^*, \alpha}(x) \leq 0\},\\
    C^>(x^*,\alpha) & = -S^<(x^*,\alpha)  = S^<(-x^*,\alpha) = \{x \in X \mid \varphi_{-x^*, \alpha}(x) < 0\},\\
    X \setminus C(x^*,\alpha) & = C^>(-x^*,-\alpha) = S^<(x^*,-\alpha) = \{x \in X \mid \varphi_{-x^*, \alpha}(x) > 0\},\\
    X \setminus C^>(x^*,\alpha) & = C(-x^*,-\alpha) = S(x^*,-\alpha) = \{x \in X \mid \varphi_{-x^*, \alpha}(x) \geq 0\}.
\end{align*}
\item[$2^\circ$] $C(x^*,\alpha) \neq X$ if and only if  $\alpha > -||x^*||_*$.
\item[$3^\circ$] If $\alpha \in (-||x^*||_*, ||x^*||_*)$, then $C(x^*,\alpha)$ is non-trivial,  the sets
\begin{align*}
    \intt\,C(x^*,\alpha) & = C^>(x^*,\alpha) = \{x \in X \mid \varphi_{-x^*, \alpha}(x) < 0 \},\\
    \bd\,C(x^*,\alpha) & = C(x^*,\alpha) \setminus C^>(x^*,\alpha) = \{x \in X \mid \varphi_{-x^*, \alpha}(x) = 0 \},\\
    (\bd\,C(x^*,\alpha)) \setminus \{0_X\} & = \{x \in X\setminus \{0_X\} \mid \varphi_{-x^*, \alpha}(x) = 0 \}
\end{align*}
are non-empty, and
\begin{equation}
    \label{eq:sup_alpha}
    \sup_{x \in S_{\bd \, C(x^*, \alpha)}} \xi_\alpha \cdot x^*(x) = \sup_{x \in S_{\bd \, C(x^*, \alpha)}^0} \xi_\alpha \cdot  x^*(x) = |\alpha|,
\end{equation}
where $\xi_\alpha := 1$ if $\alpha \geq 0$, and $\xi_\alpha := -1$ if $\alpha < 0$.
\end{itemize}

\end{lemma}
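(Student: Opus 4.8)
The plan is to dispatch the four displayed blocks of $1^\circ$, then $2^\circ$, and finally $3^\circ$ (non-triviality, the interior/boundary description, and the supremum identity \eqref{eq:sup_alpha}) in turn. Most of $1^\circ$ is pure definition-chasing: starting from the identities $S(x^*,\alpha)=\{x\mid \varphi_{x^*,\alpha}(x)\le 0\}$ and $S^<(x^*,\alpha)=\{x\mid \varphi_{x^*,\alpha}(x)<0\}$ recorded in Section~\ref{sec:introduction}, I would substitute $x^*\mapsto -x^*$ and use $\varphi_{-x^*,\alpha}(x)=-x^*(x)+\alpha\|x\|$ together with $\|-x\|=\|x\|$. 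The first two rows then follow since $-S(x^*,\alpha)=C(x^*,\alpha)$ by definition and $S(-x^*,\alpha)=-C(-x^*,\alpha)=C(x^*,\alpha)$ after unwinding $-C(-x^*,\alpha)=\{y\mid x^*(y)\ge\alpha\|y\|\}$; the last two rows are obtained by complementation, using that $x\notin C(x^*,\alpha)$ means $x^*(x)<\alpha\|x\|$, i.e. $\varphi_{-x^*,\alpha}(x)>0$, and similarly for $X\setminus C^>(x^*,\alpha)$. Each equality reduces to rewriting one scalar (strict or non-strict) inequality, so nothing delicate occurs here.

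For $2^\circ$, the key observation is $\inf_{u\in\mathbb{S}_X} x^*(u)=-\|x^*\|_*$. Since $C(x^*,\alpha)$ is a cone, $C(x^*,\alpha)=X$ holds iff $x^*(u)\ge\alpha$ for every $u\in\mathbb{S}_X$, i.e. iff $\alpha\le\inf_{u\in\mathbb{S}_X} x^*(u)=-\|x^*\|_*$; note that this equivalence does not require the infimum to be attained. Negating yields $C(x^*,\alpha)\ne X\iff \alpha>-\|x^*\|_*$.

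Now for $3^\circ$, throughout $\alpha\in(-\|x^*\|_*,\|x^*\|_*)$ means $|\alpha|<\|x^*\|_*$. Non-triviality: $C(x^*,\alpha)\ne X$ is immediate from $2^\circ$, and since $\sup_{u\in\mathbb{S}_X}x^*(u)=\|x^*\|_*>\alpha$ there is $u$ with $x^*(u)>\alpha$, so $u\in C^>(x^*,\alpha)$ and $C(x^*,\alpha)\ne\{0_X\}$. For the interior: $C^>(x^*,\alpha)$ is the preimage of $(0,\infty)$ under the continuous map $x\mapsto x^*(x)-\alpha\|x\|$, hence open, and it is contained in $C(x^*,\alpha)$, so $C^>(x^*,\alpha)\subseteq\intt C(x^*,\alpha)$ (this also gives $\intt C(x^*,\alpha)\ne\emptyset$). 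The reverse inclusion is the main step: pick $w\in\mathbb{S}_X$ with $x^*(w)>|\alpha|$ (possible as $\|x^*\|_*>|\alpha|$); for any $x$ with $x^*(x)=\alpha\|x\|$ and $t>0$, the reverse triangle inequality $\big|\|x-tw\|-\|x\|\big|\le t$ gives $-\alpha\|x-tw\|\le -\alpha\|x\|+|\alpha|\,t$, whence
\[
x^*(x-tw)-\alpha\|x-tw\|\le \alpha\|x\|-t\,x^*(w)-\alpha\|x\|+|\alpha|\,t = t\big(|\alpha|-x^*(w)\big)<0 ,
\]
so $x-tw\notin C(x^*,\alpha)$ while $x-tw\to x$; thus no point with $\varphi_{-x^*,\alpha}=0$ can be interior, proving $\intt C(x^*,\alpha)\subseteq C^>(x^*,\alpha)$. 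The boundary description then follows from $\bd C(x^*,\alpha)=(\cl C(x^*,\alpha))\setminus\intt C(x^*,\alpha)=C(x^*,\alpha)\setminus C^>(x^*,\alpha)$, and $(\bd C(x^*,\alpha))\setminus\{0_X\}\ne\emptyset$ because, choosing $u_1,u_2\in\mathbb{S}_X$ with $x^*(u_1)>\alpha>x^*(u_2)$ and a continuous path in $\mathbb{S}_X$ joining them, the intermediate value theorem produces $u\in\mathbb{S}_X$ with $x^*(u)=\alpha$, i.e. $u\in(\bd C(x^*,\alpha))\setminus\{0_X\}$.

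Finally, for \eqref{eq:sup_alpha} observe that every $u\in B_{\bd C(x^*,\alpha)}$ satisfies $\|u\|=1$ and $x^*(u)=\alpha$, so $\xi_\alpha\,x^*(u)=\xi_\alpha\alpha=|\alpha|$; by linearity of $x^*$ this constant value $|\alpha|$ persists on $S_{\bd C(x^*,\alpha)}=\conv B_{\bd C(x^*,\alpha)}$, while on $S_{\bd C(x^*,\alpha)}^0=\conv(\{0_X\}\cup B_{\bd C(x^*,\alpha)})$ one obtains values $(1-\lambda_0)|\alpha|\in[0,|\alpha|]$, whose supremum $|\alpha|$ is attained on $S_{\bd C(x^*,\alpha)}$. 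The main obstacle is precisely the interior inclusion $\intt C(x^*,\alpha)\subseteq C^>(x^*,\alpha)$, resolved by the perturbation estimate above; the one point requiring care is the non-emptiness of $(\bd C(x^*,\alpha))\setminus\{0_X\}$ (equivalently $B_{\bd C(x^*,\alpha)}\ne\emptyset$), which rests on connectedness of $\mathbb{S}_X$ and hence on $\dim X\ge 2$ — in a one-dimensional space the boundary reduces to $\{0_X\}$.
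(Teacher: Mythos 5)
Your proof is correct, and it takes a genuinely different route from the paper's. The paper disposes of $3^\circ$ by citation and reduction: for $\alpha\in[0,\|x^*\|_*)$ it simply invokes the known Bishop--Phelps facts recorded in Remark \ref{rem:BPcone}, and for $\alpha\in(-\|x^*\|_*,0)$ it applies those facts to $C(-x^*,-\alpha)$ and transports interior and boundary through the complementation identities of $1^\circ$ (e.g. $\bd\,C(x^*,\alpha)=\bd\,C^>(-x^*,-\alpha)=\bd\,C(-x^*,-\alpha)$), finally checking \eqref{eq:sup_alpha} by a two-case estimate on convex combinations; for $2^\circ$ it argues via the lineality $\ell(C(x^*,\alpha))$ rather than your infimum over the sphere, though both are equivalent one-line computations. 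You instead give a unified, self-contained argument valid for both signs of $\alpha$ at once: openness of $C^>(x^*,\alpha)$ gives $C^>\subseteq\intt\,C$, your perturbation $x-tw$ with $x^*(w)>|\alpha|$ (whose key inequality $-\alpha\|x-tw\|\le-\alpha\|x\|+|\alpha|\,t$ is correct in both sign cases) shows that no point with $x^*(x)=\alpha\|x\|$ is interior, and the constancy of $x^*$ on $S_{\bd\,C(x^*,\alpha)}$ yields \eqref{eq:sup_alpha} in one stroke, more cleanly than the paper's case distinction. What each approach buys: the paper's reduction is shorter because it outsources the substance to cited results, while yours is elementary and, importantly, makes explicit a hypothesis the paper leaves hidden. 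Your IVT-on-the-sphere argument for $B_{\bd\,C(x^*,\alpha)}\neq\emptyset$ requires $\dim X\ge 2$, and this is not an artifact of your method: for $X=\mathbb{R}$, $x^*=\mathrm{id}$ and any $\alpha\in(-1,1)$ one has $C(x^*,\alpha)=\mathbb{R}_+$, so $\bd\,C(x^*,\alpha)=\{0_X\}$, $S_{\bd\,C(x^*,\alpha)}=\emptyset$, and both the non-emptiness claim and \eqref{eq:sup_alpha} fail. The paper inherits this one-dimensional gap from Remark \ref{rem:BPcone}, so your flagging of it is a genuine catch rather than a defect of your proof.
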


\begin{proof}
    Suppose that $x^* \neq 0_{X^*}$ (hence $X \neq \{0_X\}$). 
    The proof of the equalities given in $1^\circ$ is obvious, while the equivalence given in $2^\circ$ follows immediately from the equivalence of the following statements:
    \begin{itemize}
        \item[$\bullet$] $X = C(x^*,\alpha)$.
        \item[$\bullet$] $X = \ell(C(x^*,\alpha)) = \{x\in X \mid -|x^*(x)| \geq \alpha ||x||\}$.
        \item[$\bullet$] 
        $||x^*||_* = \sup_{x \neq 0_X}\, \left|x^*\left(\frac{x}{||x||}\right)\right| \leq -\alpha$.
    \end{itemize}

    It remains to prove $3^\circ$. The conclusion is well-known for the case $\alpha \in [0, ||x^*||_*)$ (see also Remark \ref{rem:BPcone}). Now, assume that $\alpha \in (-||x^*||_*, 0)$. By Remark \ref{rem:BPcone} we get that $C(-x^*,-\alpha)$ is a non-trivial, closed, solid, convex cone with $C^>(-x^*,-\alpha) = \intt\,C(-x^*,-\alpha) \neq \emptyset$ and $(\bd\,C(-x^*,-\alpha))\setminus \{0_X\} \neq \emptyset$. Hence,
     \begin{align*}
        \bd\,C(x^*,\alpha) & = 
        \bd\,\{x \in X \mid \varphi_{-x^*, \alpha}(x) \leq 0\} \\
        & = \bd\,\{x \in X \mid \varphi_{-x^*, \alpha}(x) > 0\}\\
        & = \bd\,C^>(-x^*,-\alpha)\\
        & = \bd(\intt\,C(-x^*,-\alpha))\\
        & = \bd\,C(-x^*,-\alpha)\\
        & = \{x \in X \mid \varphi_{-x^*, \alpha}(x) = 0\}\\
        & = C(x^*,\alpha) \setminus C^>(x^*,\alpha),
    \end{align*}
    and so
    $$
    \intt\,C(x^*,\alpha) = \{x \in X \mid \varphi_{-x^*, \alpha}(x) < 0\} = C^>(x^*,\alpha).
    $$    
    Since $x^* \neq 0_{X^*}$ there is $\bar x \in X$ such that $x^*(\bar x) > 0$. If $\alpha < 0$, then $x^*(\bar x) > 0 > \alpha ||\bar x||$, hence $\bar x \in C^>(x^*,\alpha) = \intt\,C(x^*,\alpha)$, i.e., $C(x^*,\alpha)$ is solid. The latter fact, together with the assertion $2^\circ$, gives the non-triviality of $C(x^*,\alpha)$. Of course, $(\bd\,C(x^*,\alpha)) \setminus \{0_X\} = (\bd\,C(-x^*,-\alpha))\setminus \{0_X\} \neq \emptyset$.
        
    It remains to prove \eqref{eq:sup_alpha}. Take any $\bar x \in S_{\bd \, C(x^*, \alpha)}^0$, i.e., $\bar x = \sum_{i = 1}^l \lambda_i x^i$ for some $x^1, \ldots, x^l \in (B_{\bd \, C(x^*, \alpha)}) \cup \{0_X\}$ and $\lambda_1, \ldots, \lambda_l \geq 0$ with $\sum_{i = 1}^l \lambda_i = 1$. Consider two cases:
    
    \textit{Case 1:} If $\alpha \in [0, ||x^*||_*)$, then
    $x^*(\bar x) = \sum_{i = 1}^l \lambda_i x^*(x^i) \leq  \sum_{i = 1}^l \lambda_i \alpha = \alpha,$
    taking into account that $x^*(0_X) = 0$ and $x^*(x^i) = \alpha$ if $x^i \in (B_{\bd \, C(x^*, \alpha)}) \setminus \{0_X\}$. Consequently, for $\tilde{x} \in B_{\bd \, C(x^*, \alpha)} \; (\neq \emptyset)$ we have
    $$
    \alpha = x^*(\tilde{x}) \leq \sup_{x \in S_{\bd \, C(x^*, \alpha)}} x^*(x) \leq \sup_{x \in S_{\bd \, C(x^*, \alpha)}^0} x^*(x) \leq \alpha.
    $$
        
    \textit{Case 2:} If $\alpha \in (-||x^*||_*, 0)$, then
    $x^*(\bar x) = \sum_{i = 1}^l \lambda_i x^*(x^i) \geq  \sum_{i = 1}^l \lambda_i \alpha = \alpha.$
    Consequently, for $\tilde{x} \in B_{\bd \, C(x^*, \alpha)}  \; (\neq \emptyset)$
    we have
    $$
    \alpha = x^*(\tilde{x}) \geq \inf_{x \in S_{\bd \, C(x^*, \alpha)}} x^*(x) \geq \inf_{x \in S_{\bd \, C(x^*, \alpha)}^0} x^*(x) \geq \alpha.
    $$

    We conclude the validity of \eqref{eq:sup_alpha}.
    \qed
\end{proof}

We now establish a technical result, which will be needed to prove Proposition \ref{prop:basic separation_BP_bd}.

\begin{lemma}\label{lemma_relative_position}
Consider a cone $C\subseteq X$ and take some $x^*\in X^*$ and $\alpha\in (-\|x^*\|_*,\|x^*\|_*)$. Then, the following assertions are valid:
\begin{itemize}
    \item[$1^\circ$] If $\alpha < \inf_{x \in S_{\bd \,C}} x^*(x)$, then exactly one of the following inclusions holds: $\intt C \subseteq C(x^*,\alpha)$
    or  $X \setminus\intt C \subseteq C(x^*,\alpha)$.
    \item[$2^\circ$]  If $\sup_{x \in S_{\bd \,C}} x^*(x) < \alpha$, then exactly one of the following inclusions holds: $C(x^*,\alpha) \subseteq X\setminus \intt C$   or $ C(x^*,\alpha) \setminus\{0_X\}\subseteq \intt C$.
\end{itemize}
\end{lemma}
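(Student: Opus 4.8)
Write $A:=C(x^*,\alpha)$ and $E:=X\setminus\cl\,C$. By Lemma~\ref{lem:properties_BP} (applicable since $\alpha\in(-\|x^*\|_*,\|x^*\|_*)$ forces $x^*\neq 0_{X^*}$) the set $A$ is a non-trivial, solid, closed cone with $\intt A=C^>(x^*,\alpha)$, $\bd A=\{x\mid x^*(x)=\alpha\|x\|\}$, $X\setminus A=C^>(-x^*,-\alpha)$, and $A\neq X$ (as $\alpha>-\|x^*\|_*$). The first, routine, observation is mutual exclusivity: in $1^\circ$ both inclusions would give $X\subseteq A$, and in $2^\circ$ the two inclusions would force $A\setminus\{0_X\}=\emptyset$; both are impossible. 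So in each item at most one alternative holds and the whole content is to show that at least one does. I would also dispose of the case $\intt C=\emptyset$ immediately (then the first alternative of $1^\circ$ holds vacuously and the first alternative of $2^\circ$ reads $A\subseteq X$), and work from here under the standing assumption $\intt C\neq\emptyset$.

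Next I would translate the hypotheses into the position of $\bd C$ relative to $A$. Since $x^*$ is linear, $\inf_{x\in S_{\bd C}}x^*=\inf_{x\in B_{\bd C}}x^*$, so the hypothesis of $1^\circ$ gives $x^*(x)>\alpha$ for every $x\in B_{\bd C}=(\bd C)\cap\mathbb{S}_X$, and positive homogeneity yields $\bd C\setminus\{0_X\}\subseteq C^>(x^*,\alpha)=\intt A$; as $0_X\in A$, this means $(\bd C)\cap(X\setminus A)=\emptyset$, so the open set $X\setminus A$ is contained in the disjoint union $\intt C\cup E$. The same computation for $2^\circ$ gives $\bd C\setminus\{0_X\}\subseteq X\setminus A$, whence $A\setminus\{0_X\}\subseteq\intt C\cup E$. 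The plan is then to show that the relevant region — $X\setminus A$ in $1^\circ$, $A\setminus\{0_X\}$ in $2^\circ$ — lies on a single side of $\bd C$, and to feed this into Lemma~\ref{lema_segmentos}.

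The crux, and the main obstacle, is a \emph{star-shapedness} property. For $1^\circ$ I claim $X\setminus A$ is star-shaped with respect to a suitable $e$. If $\alpha\le 0$ this is immediate, because $X\setminus A=\{x\mid x^*(x)-\alpha\|x\|<0\}$ is the strict sublevel set of the convex function $x\mapsto x^*(x)-\alpha\|x\|$, hence convex. The delicate case is $\alpha>0$, where $A$ is a pointed convex cone (Remark~\ref{rem:BPcone}) and $X\setminus A$ is not convex. Here I pick $e\in(-A)\setminus\{0_X\}$ (nonempty as $A$ is non-trivial; pointedness gives $e\in X\setminus A$) and argue purely by cone algebra: for $b\in X\setminus A$ and $t\in(0,1]$, if $c:=(1-t)e+tb$ lay in $A$, then $b=\tfrac1t c+\tfrac{1-t}{t}(-e)$ would be a sum of two elements of the convex cone $A$ (using $-e\in A$), forcing the contradiction $b\in A$; hence $[e,b]\subseteq X\setminus A$. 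This needs no finite-dimensionality or compactness. For $2^\circ$ the symmetric claim is that $A\setminus\{0_X\}$ is star-shaped: for $\alpha\ge0$ from any $e\in\intt A$ (convexity of $A$, with pointedness keeping $[e,b]$ off $0_X$), and for $\alpha<0$ from any $e\in-(X\setminus A)\setminus\{0_X\}$, running the identical argument with the open convex cone $X\setminus A=C^>(-x^*,-\alpha)$ in place of $A$.

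Finally, let $e$ be the star-centre just produced. Since $e$ lies in a region disjoint from $\bd C$, either $e\in\intt C$ or $e\in E$. If $e\in\intt C$, I apply Lemma~\ref{lema_segmentos} with the cone $C$ in the role of its ``$A$'' and the star-shaped region in the role of ``$B$'': every segment $(e,b]$ stays in that region, hence misses $\bd C$, so the lemma gives $X\setminus A\subseteq\intt C$ (resp.\ $A\setminus\{0_X\}\subseteq\intt C$). If $e\in E$, I argue identically with the cone $(X\setminus C)\cup\{0_X\}$, whose interior is $E$ and whose boundary is again $\bd C$, obtaining $X\setminus A\subseteq E$ (resp.\ $A\setminus\{0_X\}\subseteq E$). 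Translating back, $X\setminus A\subseteq\intt C$ is exactly $X\setminus\intt C\subseteq A$, while $X\setminus A\subseteq E$ gives $\cl\,C\subseteq A$ and in particular $\intt C\subseteq A$; together with the mutual exclusivity of the first paragraph this yields $1^\circ$. Likewise $A\setminus\{0_X\}\subseteq\intt C$ is the second alternative of $2^\circ$, whereas $A\setminus\{0_X\}\subseteq E\subseteq X\setminus\intt C$ combined with $0_X\in X\setminus\intt C$ gives $A\subseteq X\setminus\intt C$, the first alternative, completing $2^\circ$.
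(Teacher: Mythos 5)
Your proof is correct, and it reaches the conclusion by a genuinely different route than the paper. The paper proves $1^\circ$ by fixing, when $\intt C\not\subseteq C(x^*,\alpha)$, a point $x_0\in(\intt C)\cap\Omega$ with $\Omega:=X\setminus C(x^*,\alpha)$, and showing $\Omega\subseteq\intt C$ via Lemma~\ref{lema_segmentos}. For $\alpha\le 0$ it uses convexity of $\Omega$, exactly as you do; but in the delicate case $\alpha>0$ it joins $x_0$ to an arbitrary $y\in\Omega$ by a \emph{two-segment} path through an auxiliary point $z\in(x_0+\intt S(x^*,\alpha))\cap(y+\intt S(x^*,\alpha))$, whose existence rests on a linear separation argument, and it verifies with explicit norm-linear estimates that $[x_0,z]$ and $[z,y]$ stay in $\Omega$; assertion $2^\circ$ is then deduced from $1^\circ$ via the substitution $(x^*,\alpha)\mapsto(-x^*,-\alpha)$ together with an argument about $(\bd C(x^*,\alpha))\cap(\bd C)$. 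You replace all of this by a single structural fact: the relevant region ($X\setminus A$ in $1^\circ$, $A\setminus\{0_X\}$ in $2^\circ$) is star-shaped about one explicitly chosen center $e$, proved by clean cone algebra ($b=\tfrac1t c+\tfrac{1-t}{t}(-e)\in A+A\subseteq A$), after which Lemma~\ref{lema_segmentos} is invoked once, applied to $C$ or to $(X\setminus C)\cup\{0_X\}$ according to which side of $\bd C$ the center lies on. This removes the separation argument and the two-segment bookkeeping, treats $1^\circ$ and $2^\circ$ symmetrically by the same machinery (no sign-flip reduction), and yields as a by-product the sharper dichotomy recorded in Remark~\ref{rem:C_BP_cone} ($\cl C\subseteq C(x^*,\alpha)$, respectively $(\cl C)\setminus\{0_X\}\subseteq X\setminus C(x^*,\alpha)$); the modest price is the verification that $(X\setminus C)\cup\{0_X\}$ has interior $X\setminus\cl C$ and boundary $\bd C$, which is straightforward and which you use correctly.

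One small repair is needed in $2^\circ$. You split into $\alpha\ge0$ and $\alpha<0$ and, for $\alpha\ge 0$, invoke pointedness of $A=C(x^*,\alpha)$ to keep $[e,b]$ away from $0_X$. At $\alpha=0$ the cone $C(x^*,0)$ is a closed half-space, hence not pointed, so that citation fails there. The conclusion survives by a one-line direct argument: if $0_X\in(e,b)$ with $e\in\intt A$ and $b\in A\setminus\{0_X\}$, then $b=-\lambda e$ for some $\lambda>0$, so $x^*(b)=-\lambda x^*(e)<0\le\alpha\|b\|$, contradicting $b\in A$. Equivalently, split the cases as $\alpha>0$ versus $\alpha\le 0$: your cone-algebra argument with $W:=X\setminus A$ in the role of the convex cone applies verbatim at $\alpha=0$, since $W$ is then convex and closed under positive scalar multiplication.
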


\begin{proof} 
$1^\circ$. In view of Lemma \ref{lem:properties_BP} ($2^\circ$), under our assumption $\alpha\in (-\|x^*\|_*,\|x^*\|_*)$, we have $C(x^*,\alpha) \neq X$, so that both inclusions cannot be valid at the same time. It is clear that exactly one of the following conditions holds: $\intt C \subseteq C(x^*,\alpha)$ or $\intt C \not \subseteq C(x^*,\alpha)$. 
The case $\intt C = \emptyset$ is clear. Suppose $\intt C \neq \emptyset$. Of course, $\intt C \subseteq C(x^*,\alpha)$ can happen and we are done. Assume that $\intt C \not\subseteq C(x^*,\alpha)$, and denote $\Omega := X\setminus C(x^*,\alpha)$. Note that $\Omega = C^>(-x^*,-\alpha) = \intt\,C(-x^*,-\alpha)$ by Lemma \ref{lem:properties_BP} ($3^\circ$). Fix some $x_0\in (\intt C) \cap \Omega$.  We will prove that $\Omega\subseteq \intt C$, which is equivalent to $X \setminus\intt C \subseteq C(x^*,\alpha)$. Let us pick an arbitrary $y \in \Omega$. 
By the definition of $\Omega$ and the inequality $\alpha<\inf_{x \in S_{\bd \,C}} x^*(x)$ we get $\alpha \|x\| \leq x^*(x)$ for $x \in \bd \,C$, and $x^*(x) < \alpha \|x\|$ for $x \in \Omega$, which shows that $\Omega \cap \bd C=\emptyset$. 
Consider two cases:

\textit{Case 1:} Assume that $\alpha\in (-\|x^*\|_*,0]$. In this case,  $\Omega = C^>(-x^*,-\alpha)$ is convex assuring that $[x_0,y] \subseteq \Omega$, hence $[x_0,y]\cap \bd C \subseteq \Omega \cap \bd C =\emptyset$. Then, Lemma \ref{lema_segmentos} applied to $A=C$ and $B=[x_0,y]$ ensures that $y \in \intt C$. 

\textit{Case 2:} Assume that $\alpha \in (0,\|x^*\|_*)$. 
Since in this case, $\emptyset \neq -C^>(x^*,\alpha) = \intt S(x^*,\alpha)$, we can consider $z\in (x_0+\intt S(x^*,\alpha))\cap (y+\intt S(x^*,\alpha))$ (otherwise, assuming that the latter set is empty, one gets a contradiction by using a classical linear separation argument for convex sets, taking into account that $S(x^*,\alpha)$ is a solid, convex cone). 
Now, we claim that $[x_0,z]\subseteq \Omega$. Indeed, take any $x \in [x_0,z]$ and write $x = x_0 + \lambda (z-x_0) $ with $\lambda \in [0,1]$. Then,
\begin{align*}
     x^*(x) - \alpha \|x\| & =  x^*(x_0 + \lambda (z-x_0)) - \alpha\|x_0 + \lambda (z-x_0)\| \\
     &\leq  x^*(x_0)+x^*(\lambda (z-x_0))-\alpha (\|x_0\|-\lambda \|z-x_0\| )\\
     & =  x^*(x_0)-\alpha \|x_0\| + \lambda (x^*(z-x_0) + \alpha \|z-x_0 \|)\\
     & <0,
\end{align*}
where the latter strict inequality is ensured by $x_0 \in \Omega$ and $z-x_0 \in \intt S(x^*,\alpha)$. This shows that $x \in C^>(-x^*,-\alpha) = \Omega$.
As a consequence, we get $[x_0,z]\cap \bd C \subseteq \Omega \cap \bd C = \emptyset$, and by Lemma \ref{lema_segmentos} applied to $A = C$ and $B = [x_0,z]$ we conclude $z\in \intt C$. 
We also have $[z,y] \subseteq \Omega$. Indeed, for $x = y + \lambda (z-y)$ with $\lambda \in [0,1]$, $z\in (\intt C) \cap \Omega$, $y\in \Omega$, one can analogously prove that $x^*(x) - \alpha \|x\| < 0$, hence $x \in \Omega$. Finally, Lemma \ref{lema_segmentos} applied to $A = C$ and $B = [z,y]$ gives $y \in \intt C$. 

$2^\circ$. Since $\sup_{x \in S_{\bd \,C}} x^*(x) < \alpha$ is equivalent to $-\alpha<\inf_{x \in S_{\bd \,C}} (-x^*)(x)$, by $1^\circ$ we get $\intt C \subseteq C(-x^*,-\alpha)$ (or equivalently, $\intt C \subseteq \intt C(-x^*,-\alpha) = C^>(-x^*,-\alpha) = X\setminus C(x^*,\alpha)$) or $X \setminus\intt C \subseteq C(-x^*,-\alpha)$. 
Equivalently, we also get $C(x^*,\alpha) \subseteq X\setminus \intt C$ or $\intt C(x^*,\alpha) = C^>(x^*,\alpha) = X\setminus  C(-x^*,-\alpha) \subseteq \intt C$. 
From the assumptions in $2^\circ$ we infer $(\bd C(x^*,\alpha)) \cap (\bd C) = \{0_X\}$, hence $\intt C(x^*,\alpha) \subseteq \intt C$ if and only if $C(x^*,\alpha) \setminus\{0_X\}\subseteq \intt C$.
\qed
\end{proof}

\begin{remark} \label{rem:C_BP_cone}
In the previous Lemma \ref{lemma_relative_position}, under the assumptions in $1^\circ$ we have $(\bd C) \setminus \{0_X\} \subseteq C^>(x^*,\alpha) \subseteq C(x^*,\alpha)$, while under the assumptions in $2^\circ$ we have $(\bd C) \setminus \{0_X\} \subseteq C^>(-x^*,-\alpha) = X\setminus C(x^*,\alpha)$. Thus, the assertions in Lemma \ref{lemma_relative_position} can be stated as:
\begin{itemize}
    \item[$1^\circ$] If $\alpha<\inf_{x \in S_{\bd \,C}} x^*(x)$, then exactly one of the following inclusions holds: $\cl\,C \subseteq C(x^*,\alpha)$
    or  $X \setminus C(x^*,\alpha) \subseteq \intt C$.
    \item[$2^\circ$]  If $\sup_{x \in S_{\bd \,C}} x^*(x) < \alpha$, then exactly one of the following inclusions holds: $(\cl\,C)\setminus\{0_X\}\subseteq X\setminus C(x^*,\alpha)$ or $C(x^*,\alpha) \setminus\{0_X\}\subseteq \intt C$.
\end{itemize}  
\end{remark}

%---------------------------------------------------------
\subsection{Basics in non-symmetrical cone separation based on Bishop-Phelps separating cones} \label{sec:basics_nonsym_cone_separation}
%---------------------------------------------------------

Consider a non-trivial cone $C \subseteq X$ with the norm-base $B_{C}$, and a non-trivial cone $K \subseteq X$ with the norm-base $B_{K}$.

\begin{remark} \label{rem:sep_conditions}
For any $(x^*, \alpha) \in C^{a\#} \cap (X^* \times \mathbb{P})$, it is easy to check that the  non-symmetric strict separation condition 
\begin{equation*} \label{separation_condition_NCK}
C(x^*, \alpha) \in \mathcal{N}(C, K)
\end{equation*}
is equivalent to any of the following conditions:
\begin{align}  
&K \setminus \{0_X\} \subseteq X \setminus C(x^*, \alpha) \quad \text{and} \quad  C \setminus \{0_X\} \subseteq \intt\, C(x^*, \alpha) \label{separation_condition_cones_1}; \\
&K \cap C(x^*, \alpha) = \{0_X\} \quad \text{and} \quad  C \setminus \{0_X\} \subseteq \intt\, C(x^*, \alpha)\label{separation_condition_cones_2};\\
    &(-x^*)(k) + \alpha ||k|| >  0 > (-x^*)(c) + \alpha ||c||  \mbox{ for } k \in K\setminus \{0_X\}, c \in C \setminus \{0_X\}\label{separation_condition_inequalities2};\\
    &x^*(k) + \alpha ||k|| >  0 > x^*(c) + \alpha ||c||    \mbox{ for } k \in -K\setminus \{0_X\}, c \in -C \setminus \{0_X\}\label{separation_condition_inequalities}.
\end{align}

Note that conditions \eqref{separation_condition_inequalities2} and \eqref{separation_condition_inequalities} correspond to analytical formulations, while \eqref{separation_condition_cones_1} and \eqref{separation_condition_cones_2} can be viewed as geometrical formulations.

\end{remark}

In the next propositions, we recall known nonlinear cone separation results for (not necessarily convex) cones involving Bishop-Phelps separating cones / separating (norm-linear) functions.

\begin{proposition}[{\cite[Th. 5.2]{gunther2023nonlinear_reflexive}}] \label{th:sep_cones_Banach_3}
Assume that $\cl\,S_{C}$ is weakly compact (e.g. if $X$ is also reflexive). Then, the following conditions are equivalent:
	\begin{itemize}
		\item[$1^\circ$] 
		$
		(\cl\,S_K^0) \cap (\cl\, S_{C}) = \emptyset.
		$
		\item[$2^\circ$]  There exists $(x^*, \alpha) \in C^{aw\#} \cap (X^* \times \mathbb{P})$  such that 
        \eqref{separation_condition_inequalities}
        is valid.
		\item[$3^\circ$]  There exists $(x^*, \alpha) \in C^{aw\#}$ such that  \eqref{separation_condition_inequalities} is valid.
	\end{itemize}	
\end{proposition}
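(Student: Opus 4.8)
The plan is to establish the cycle of implications $1^\circ \Rightarrow 2^\circ \Rightarrow 3^\circ \Rightarrow 1^\circ$. Since $C^{aw\#} \cap (X^* \times \mathbb{P}) \subseteq C^{aw\#}$, the implication $2^\circ \Rightarrow 3^\circ$ is immediate, so all the work lies in $1^\circ \Rightarrow 2^\circ$ and $3^\circ \Rightarrow 1^\circ$. Throughout I would exploit that, for $x^* \in C^\#$ and $\alpha \geq 0$, the analytic condition \eqref{separation_condition_inequalities} is, by positive homogeneity (cf. the equivalent form \eqref{separation_condition_inequalities2} in Remark \ref{rem:sep_conditions}), nothing but the normalized pair of inequalities $x^*(x) > \alpha$ for all $x \in B_C$ and $x^*(x) < \alpha$ for all $x \in B_K$. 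I would also record the two elementary identities $\inf_{x \in \cl\,S_C} x^*(x) = \inf_{x \in B_C} x^*(x)$ and $\sup_{x \in \cl\,S_K^0} x^*(x) = \max\{0, \sup_{x \in B_K} x^*(x)\}$, which hold because $x^*$ is linear and continuous, $\cl\,S_C = \cl(\conv\,B_C)$, $\cl\,S_K^0 = \cl(\conv(\{0_X\} \cup B_K))$, and $x^*(0_X) = 0$. A further ingredient I would set up once and reuse is that $\cl_w B_C$ is weakly compact, being a weakly closed subset of $\cl_w S_C = \cl\,S_C$ (the equality because $S_C$ is convex), which is weakly compact by hypothesis.

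For $1^\circ \Rightarrow 2^\circ$ I would apply Proposition \ref{prop:basic separation} (implication $3^\circ \Rightarrow 2^\circ$) to the non-empty, closed, convex sets $\Omega^1 := \cl\,S_K^0$ and $\Omega^2 := \cl\,S_C$, the second of which is weakly compact. Disjointness $1^\circ$ then furnishes $x^* \in X^* \setminus \{0_{X^*}\}$ with $\gamma := \sup_{x \in \cl\,S_K^0} x^*(x) < \inf_{x \in \cl\,S_C} x^*(x) =: \beta$. Because $0_X \in \cl\,S_K^0$, we have $\gamma \geq 0$, hence $\beta > 0$; in particular $x^*(c) \geq \beta\,\|c\| > 0$ for every $c \in C \setminus \{0_X\}$, so $x^* \in C^\#$. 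I would then calibrate the level by choosing any $\alpha \in (\gamma, \beta)$. This gives $\alpha > \gamma \geq 0$, so $\alpha \in \mathbb{P}$; moreover $\inf_{x \in \cl_w B_C} x^*(x) = \inf_{x \in B_C} x^*(x) = \beta > \alpha$ yields $(x^*, \alpha) \in C^{aw\#}$, and $\sup_{x \in B_K} x^*(x) \leq \gamma < \alpha$ gives the $K$-inequality. Thus $(x^*, \alpha) \in C^{aw\#} \cap (X^* \times \mathbb{P})$ satisfies \eqref{separation_condition_inequalities}.

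For $3^\circ \Rightarrow 1^\circ$, let $(x^*, \alpha) \in C^{aw\#}$ satisfy \eqref{separation_condition_inequalities}. Using weak compactness of $\cl_w B_C$ and weak continuity of $x^*$, the infimum $\inf_{x \in \cl_w B_C} x^*(x)$ is attained and, since $x^*(x) > \alpha$ there by definition of $C^{aw\#}$, strictly exceeds $\alpha$; hence $\inf_{x \in \cl\,S_C} x^*(x) > \alpha$. On the other side, the $K$-part of \eqref{separation_condition_inequalities} gives $\sup_{x \in B_K} x^*(x) \leq \alpha$, so together with $\alpha \geq 0$ we obtain $\sup_{x \in \cl\,S_K^0} x^*(x) = \max\{0, \sup_{x \in B_K} x^*(x)\} \leq \alpha$. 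Combining, $\sup_{x \in \cl\,S_K^0} x^*(x) \leq \alpha < \inf_{x \in \cl\,S_C} x^*(x)$, and Proposition \ref{prop:basic separation} (implication $2^\circ \Rightarrow 3^\circ$) yields $(\cl\,S_K^0) \cap (\cl\,S_C) = \emptyset$, i.e. $1^\circ$.

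The main obstacle I anticipate is the careful handling of strictness together with the calibration of $\alpha$. The weak-compactness hypothesis is used precisely to upgrade the pointwise inequalities $x^*(x) > \alpha$ on $\cl_w B_C$ to a strict gap $\inf_{x \in \cl_w B_C} x^*(x) > \alpha$; this is exactly what distinguishes $C^{aw\#}$ from $C^{a\#}$ and makes $3^\circ \Rightarrow 1^\circ$ go through. Dually, the observation that $0_X \in \cl\,S_K^0$ forces the separation value $\gamma$ to be non-negative, which is what guarantees that the level $\alpha$ can be chosen strictly positive, thereby securing membership in the $\mathbb{P}$-part required by $2^\circ$.
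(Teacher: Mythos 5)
Your proof is correct. A point of context: this paper does not prove the proposition at all---it is imported verbatim, with citation, from \cite[Th. 5.2]{gunther2023nonlinear_reflexive}---so there is no internal proof to compare against; your argument supplies a self-contained derivation from the paper's own toolkit.

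What you do, concretely, is close the cycle $1^\circ \Rightarrow 2^\circ \Rightarrow 3^\circ \Rightarrow 1^\circ$ using only Proposition \ref{prop:basic separation} plus three elementary facts: positive homogeneity (reducing \eqref{separation_condition_inequalities} to the normalized inequalities $x^*(x) > \alpha$ on $B_C$, $x^*(x) < \alpha$ on $B_K$); the identities $\inf_{\cl\,S_C} x^* = \inf_{B_C} x^*$ and $\sup_{\cl\,S_K^0} x^* = \max\{0, \sup_{B_K} x^*\}$; and Mazur's theorem, which gives $\cl_w S_C = \cl\,S_C$ and hence weak compactness of $\cl_w B_C$ as a weakly closed subset of a weakly compact set. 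Both nontrivial implications are sound: in $1^\circ \Rightarrow 2^\circ$, the observation that $0_X \in \cl\,S_K^0$ forces the separation value $\gamma \geq 0$ is exactly what lets you calibrate $\alpha \in (\gamma, \beta) \subseteq \mathbb{P}$ and land in $C^{aw\#} \cap (X^* \times \mathbb{P})$; in $3^\circ \Rightarrow 1^\circ$, attaining $\inf_{\cl_w B_C} x^*$ on the weakly compact set $\cl_w B_C$ is precisely the step that upgrades the pointwise strict inequalities defining $C^{aw\#}$ to the uniform gap $\sup_{\cl\,S_K^0} x^* \leq \alpha < \inf_{\cl\,S_C} x^*$ needed to conclude disjointness. (One cosmetic remark: when you invoke Proposition \ref{prop:basic separation} ($2^\circ \Rightarrow 3^\circ$) at the end, its hypothesis $x^* \neq 0_{X^*}$ does hold---since $x^* \in C^{\#}$ and $C \neq \{0_X\}$---but the strict sup/inf gap already yields $(\cl\,S_K^0) \cap (\cl\,S_C) = \emptyset$ directly, so nothing is at stake.) Your proof has the additional merit of isolating exactly where the weak compactness hypothesis is used, namely only in $1^\circ \Rightarrow 2^\circ$ (to apply the strict separation of Proposition \ref{prop:basic separation}) and in $3^\circ \Rightarrow 1^\circ$ (for the attainment argument), which is consistent with how the cited source organizes the result.
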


\begin{proposition}[{\cite[Lem. 5.2]{gunther2023nonlinear_reflexive}}] \label{lem:new_lemma_strict_sep_conditions}
The following assertions are equivalent:
   \begin{itemize}
		\item[$1^\circ$]  There exists $(x^*, \alpha) \in \cor\,C^{a+}$ such that \eqref{separation_condition_inequalities} is valid.
        \item[$2^\circ$]  There exists $(x^*, \alpha) \in \cor\,C^{a+}$ such that \eqref{separation_condition_inequalities} with $\cl({\rm conv}\,C)$ in the role of $C$ and $\cl\, K$ in the role of $K$ is valid.
        \item[$3^\circ$]  There exist $\delta_2 > \delta_1 > 0$ and $x^* \in X^*$ such that, for any $\alpha \in (\delta_1, \delta_2)$, we have $(x^*, \alpha) \in \cor\,C^{a+}$, and \eqref{separation_condition_inequalities} with $\cl({\rm conv}\,C)$ in the role of $C$ and $\cl\, K$ in the role of $K$ is valid.  
        \item[$4^\circ$] There exist $\delta_2 > \delta_1 > 0$ and $x^* \in X^*$ such that, for any $\alpha \in (\delta_1, \delta_2)$, we have $(x^*, \alpha) \in C^{a\#}$, and \eqref{separation_condition_inequalities} with $\cl({\rm conv}\,C)$ in the role of $C$ and $\cl\, K$ in the role of $K$ is valid. 
   \end{itemize}  
\end{proposition}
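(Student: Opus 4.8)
The plan is to prove the cycle $1^\circ \Rightarrow 3^\circ \Rightarrow 4^\circ \Rightarrow 2^\circ \Rightarrow 1^\circ$, exploiting that $3^\circ$ and $4^\circ$ are merely interval-valued ("thickened") reformulations of $2^\circ$ and $1^\circ$. Before starting, I would rewrite \eqref{separation_condition_inequalities} in norm-base form: after normalizing $k = -\kappa$, $c = -\gamma$ and dividing by the norm, the condition for a pair $(C,K)$ becomes $\sup_{u \in B_K} x^*(u) < \alpha < \inf_{v \in B_C} x^*(v)$, with the understanding that in $1^\circ$ the $C$-part is already subsumed by $(x^*,\alpha)\in\cor\,C^{a+}$ via Proposition~\ref{prop:properties_augmented_dual_cone}($2^\circ$). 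This reduces the whole statement to elementary scalar inequalities among $\sup_{B_K}x^*$, $\inf_{B_C}x^*$, $\inf_{B_{\cl(\conv C)}}x^*$ and $\alpha$.

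Two implications are immediate. For $2^\circ \Rightarrow 1^\circ$, since $C \subseteq \cl(\conv C)$ and $K \subseteq \cl K$, the inequalities \eqref{separation_condition_inequalities} for the larger sets $\cl(\conv C), \cl K$ hold a fortiori for $C, K$, so the same pair $(x^*,\alpha)$ witnesses $1^\circ$. For $3^\circ \Rightarrow 4^\circ$, Proposition~\ref{prop:properties_augmented_dual_cone}($2^\circ$) gives $\cor\,C^{a+} \subseteq C^{aw\#}\cap(X^*\times\mathbb{P}) \subseteq C^{a\#}$, so every admissible pair of $3^\circ$ already lies in $C^{a\#}$ while the separation condition is identical; hence $4^\circ$ holds verbatim.

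The substantive step is $1^\circ \Rightarrow 3^\circ$. Here I would first invoke the invariance $C^{a+} = (\cl(\conv C))^{a+}$ from \eqref{eq:clconvCa+=Ca+}, which forces $\cor\,C^{a+} = \cor\,(\cl(\conv C))^{a+}$; applying Proposition~\ref{prop:properties_augmented_dual_cone}($2^\circ$) to the cone $\cl(\conv C)$ then upgrades the bound from $1^\circ$ to the strict inequality $\beta := \inf_{v \in B_{\cl(\conv C)}} x^*(v) > \alpha$. On the $K$-side, the inclusion $B_{\cl K} \subseteq \cl B_K$ (the first part of the proof of Lemma~\ref{lem:no_cl_vs_no_cl}) together with continuity of $x^*$ yields $\sigma := \sup_{u \in B_{\cl K}} x^*(u) = \sup_{u \in B_K} x^*(u) \leq \alpha$. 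Combining, $\sigma \leq \alpha < \beta$ with $\beta > 0$, so the open interval $(\max\{\sigma,0\},\beta)$ is non-empty; choosing $\delta_1 < \delta_2$ inside it, I claim $3^\circ$ holds for this very $x^*$. Indeed, for every $\alpha' \in (\delta_1,\delta_2)$ one has $0 < \alpha' < \beta \leq \inf_{B_C} x^*$ (using $B_C \subseteq B_{\cl(\conv C)}$), so $(x^*,\alpha') \in \cor\,C^{a+}$, while $\sigma < \delta_1 < \alpha'$ and $\alpha' < \delta_2 < \beta$ give exactly the two strict inequalities of \eqref{separation_condition_inequalities} for $\cl(\conv C), \cl K$.

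Finally, for $4^\circ \Rightarrow 2^\circ$ I would pass to the limit: since $(x^*,\alpha)\in C^{a\#}$ for every $\alpha \in (\delta_1,\delta_2)$ means $\inf_{B_C}x^* \geq \alpha$ for all such $\alpha$, letting $\alpha \uparrow \delta_2$ gives $\inf_{B_C}x^* \geq \delta_2$. Fixing any $\alpha^* \in (\delta_1,\delta_2)$ we then have $\inf_{B_C}x^* \geq \delta_2 > \alpha^* > 0$ and $x^* \in C^\# \subseteq C^+$, so $(x^*,\alpha^*)\in\cor\,C^{a+}$ by Proposition~\ref{prop:properties_augmented_dual_cone}($2^\circ$), and the separation for $\cl(\conv C),\cl K$ holds at $\alpha^*$ by hypothesis; this is precisely $2^\circ$. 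The main obstacle throughout is the $1^\circ\Rightarrow 3^\circ$ step, and specifically the upgrade from $\inf_{B_C}x^*$ to $\inf_{B_{\cl(\conv C)}}x^*$: it is what lets the separation be promoted to the convex closure $\cl(\conv C)$ without changing $x^*$, and it hinges entirely on the identity $C^{a+}=(\cl(\conv C))^{a+}$.
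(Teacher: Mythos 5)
The paper itself gives no proof of this proposition: it is imported as \cite[Lem. 5.2]{gunther2023nonlinear_reflexive}, so there is no internal argument to compare against, and your proposal must be judged on its own merits. Judged so, it is correct and self-contained relative to the tools the paper restates: the cycle $1^\circ \Rightarrow 3^\circ \Rightarrow 4^\circ \Rightarrow 2^\circ \Rightarrow 1^\circ$ closes; the substantive step $1^\circ \Rightarrow 3^\circ$ correctly combines the invariance $C^{a+} = (\cl(\conv\,C))^{a+}$ from \eqref{eq:clconvCa+=Ca+} (hence equality of the cores) with the characterization of $\cor\,C^{a+}$ in Proposition \ref{prop:properties_augmented_dual_cone} ($2^\circ$) applied to $\cl(\conv\,C)$, and with the inclusion $B_{\cl\,K} \subseteq \cl\,B_K$ extracted from the proof of Lemma \ref{lem:no_cl_vs_no_cl}; the resulting gap $\bigl(\max\{\sigma,0\},\beta\bigr)$ is precisely what furnishes the interval $(\delta_1,\delta_2)$ of admissible $\alpha$, and $4^\circ \Rightarrow 2^\circ$ via $\alpha \uparrow \delta_2$ and Proposition \ref{prop:properties_augmented_dual_cone} ($2^\circ$) is sound. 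One imprecision you should repair: your opening claim that \eqref{separation_condition_inequalities} ``becomes'' $\sup_{u\in B_K} x^*(u) < \alpha < \inf_{v\in B_C} x^*(v)$ is not an equivalence --- the pointwise strict inequalities in \eqref{separation_condition_inequalities} only yield $\sup_{B_K} x^* \le \alpha \le \inf_{B_C} x^*$, and strictness at the level of sup/inf can fail. Fortunately you never use the false direction: in $1^\circ \Rightarrow 3^\circ$ you correctly write $\sigma \le \alpha$ and recover strictness for the new parameters $\alpha'$ from $\sigma < \delta_1$ and $\delta_2 < \beta$, and $4^\circ \Rightarrow 2^\circ$ uses only $\inf_{B_C} x^* \ge \alpha$. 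So the proof stands; simply restate the norm-base reformulation with non-strict sup/inf inequalities, or drop that sentence.
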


\begin{proposition}[{\cite[Cor. 5.1]{gunther2023nonlinear_reflexive}}]
    Assume that $\cl\, S_{\cl({\rm conv}\,C)}$ is weakly compact. Then, the following assertions are equivalent:  
    \begin{itemize}
        \item[$1^\circ$] $(\cl\,S_K^0) \cap (\cl\, S_{C}) = \emptyset$.
        \item[$2^\circ$] $(\cl\,S_{\cl\, K}^0) \cap (\cl\, S_{\cl({\rm conv}\,C)}) = \emptyset$.
        \item[$3^\circ$] There exists $(x^*, \alpha) \in C^{aw\#} \cap (X^* \times \mathbb{P})$ such that \eqref{separation_condition_inequalities} is valid.
        \item[$4^\circ$] There exists $(x^*, \alpha) \in C^{aw\#} \cap (X^* \times \mathbb{P})$ such that \eqref{separation_condition_inequalities}         
        with $\cl({\rm conv}\,C)$ in the role of $C$ and $\cl\, K$ in the role of $K$ is valid.  
    \end{itemize}
\end{proposition}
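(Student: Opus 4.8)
The plan is to apply the already-established Proposition \ref{th:sep_cones_Banach_3} twice---once to the pair $(C,K)$ and once to the pair $(\cl(\conv\,C), \cl\,K)$---and then to bridge the two outputs via Proposition \ref{lem:new_lemma_strict_sep_conditions}, obtaining the chain $1^\circ \Leftrightarrow 3^\circ \Leftrightarrow 4^\circ \Leftrightarrow 2^\circ$. First I would record the preliminary observations that make both applications legitimate. Since $B_C = C \cap \mathbb{S}_X \subseteq \cl(\conv\,C) \cap \mathbb{S}_X = B_{\cl(\conv\,C)}$, we get $S_C \subseteq S_{\cl(\conv\,C)}$ and hence $\cl\,S_C \subseteq \cl\,S_{\cl(\conv\,C)}$. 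As $\cl\,S_C$ is closed and convex it is weakly closed, so being a subset of the weakly compact set $\cl\,S_{\cl(\conv\,C)}$ it is itself weakly compact; the same reasoning applied to $\cl_w\,B_C \subseteq \cl\,S_C$ and to $\cl_w\,B_{\cl(\conv\,C)} \subseteq \cl\,S_{\cl(\conv\,C)}$ shows that both these weak closures are weakly compact.

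The key algebraic step is to identify the two relevant augmented dual cones. By \eqref{eq:clconvCa+=Ca+} we have $C^{a+} = (\cl(\conv\,C))^{a+}$, so these cones have the same algebraic interior, $\cor\,C^{a+} = \cor\,(\cl(\conv\,C))^{a+}$. Since $\cl_w\,B_C$ and $\cl_w\,B_{\cl(\conv\,C)}$ are weakly compact, Proposition \ref{prop:properties_augmented_dual_cone} ($3^\circ$) yields $\cor\,C^{a+} = C^{aw\#} \cap (X^* \times \mathbb{P})$ and $\cor\,(\cl(\conv\,C))^{a+} = (\cl(\conv\,C))^{aw\#} \cap (X^* \times \mathbb{P})$. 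Combining these gives the identity
\[
C^{aw\#} \cap (X^* \times \mathbb{P}) = (\cl(\conv\,C))^{aw\#} \cap (X^* \times \mathbb{P}),
\]
which is precisely what lets condition $4^\circ$ (phrased with $C^{aw\#}$) be read off from an application of Proposition \ref{th:sep_cones_Banach_3} to $\cl(\conv\,C)$ (which naturally produces $(\cl(\conv\,C))^{aw\#}$). I expect this matching of the two augmented dual cones to be the main point that must not be skipped, since a naive reading would not see why $4^\circ$, stated with $C^{aw\#}$, comes out of applying the theorem to the closed convex hull.

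With these preliminaries in place I would close the equivalences as follows. Applying Proposition \ref{th:sep_cones_Banach_3} to $(C,K)$ --- legitimate because $\cl\,S_C$ is weakly compact --- yields $1^\circ \Leftrightarrow 3^\circ$. Applying the same proposition to $(\cl(\conv\,C), \cl\,K)$ --- legitimate because $\cl\,S_{\cl(\conv\,C)}$ is weakly compact by hypothesis --- gives the equivalence of $2^\circ$ (its assertion $1^\circ$ reads $(\cl\,S_{\cl\,K}^0) \cap (\cl\,S_{\cl(\conv\,C)}) = \emptyset$) with the existence of $(x^*,\alpha) \in (\cl(\conv\,C))^{aw\#} \cap (X^*\times\mathbb{P})$ satisfying \eqref{separation_condition_inequalities} for the pair $\cl(\conv\,C), \cl\,K$; by the displayed identity this existence statement is exactly $4^\circ$, so $2^\circ \Leftrightarrow 4^\circ$. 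Finally, using $\cor\,C^{a+} = C^{aw\#} \cap (X^* \times \mathbb{P})$ once more, conditions $3^\circ$ and $4^\circ$ coincide verbatim with assertions $1^\circ$ and $2^\circ$ of Proposition \ref{lem:new_lemma_strict_sep_conditions}, which are equivalent; hence $3^\circ \Leftrightarrow 4^\circ$. The three links together establish the mutual equivalence of $1^\circ$, $2^\circ$, $3^\circ$, $4^\circ$, completing the proof.
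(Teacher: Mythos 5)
The paper itself does not prove this proposition (it is recalled verbatim from the cited reference), so your derivation has to stand on the other results quoted in Section~\ref{sec:preliminaries}. Your skeleton is the natural one and most of it is correct: the inheritance of weak compactness by $\cl\,S_C$, $\cl_w\,B_C$ and $\cl_w\,B_{\cl(\conv\,C)}$ from the weakly compact set $\cl\,S_{\cl(\conv\,C)}$ is sound; the bridging identity
$C^{aw\#}\cap(X^*\times\mathbb{P})=\cor\,C^{a+}=\cor\,(\cl(\conv\,C))^{a+}=(\cl(\conv\,C))^{aw\#}\cap(X^*\times\mathbb{P})$,
obtained from \eqref{eq:clconvCa+=Ca+} and Proposition~\ref{prop:properties_augmented_dual_cone}~($3^\circ$), is exactly the observation needed to read $4^\circ$ off the closed convex hull version; and the links $1^\circ\Leftrightarrow 3^\circ$ (Proposition~\ref{th:sep_cones_Banach_3} applied to $(C,K)$) and $3^\circ\Leftrightarrow 4^\circ$ (Proposition~\ref{lem:new_lemma_strict_sep_conditions} combined with that identity) are correctly assembled. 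This is also the pattern the paper follows in its own proofs of Theorems~\ref{th:non-sym_sep_theorem_1} and~\ref{th:non-sym_sep_theorem_3}.

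There is, however, one genuine gap, in the link $2^\circ\Leftrightarrow 4^\circ$. Proposition~\ref{th:sep_cones_Banach_3} is stated under the standing assumption of Section~\ref{sec:basics_nonsym_cone_separation} that \emph{both} cones involved are non-trivial, and when you apply it to the pair $(\cl(\conv\,C),\cl\,K)$ you check only its weak-compactness hypothesis, declaring the application ``legitimate.'' Non-triviality of $\cl(\conv\,C)$ and $\cl\,K$ is not automatic: in $(\mathbb{R}^2,\|\cdot\|_2)$ the non-trivial cone $C=\{(x,y)\mid y\neq 0\}\cup\{0_X\}$ has $\cl(\conv\,C)=\mathbb{R}^2$, and the non-trivial cone $K=\mathbb{R}^2\setminus\{(x,0)\mid x>0\}$ has $\cl\,K=\mathbb{R}^2$, while the hypothesis still holds since $\cl\,S_{\cl(\conv\,C)}=\mathbb{B}_X$ is weakly compact there. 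So, as written, you invoke a recalled result outside its stated scope. The repair is the device the paper uses at the end of its proof of Theorem~\ref{th:non-sym_sep_theorem_1}: check that $2^\circ$ and $4^\circ$ each force $\cl(\conv\,C)\neq X$ and $\cl\,K\neq X$. Indeed, if $\cl(\conv\,C)=X$, then your own identity gives $C^{aw\#}\cap(X^*\times\mathbb{P})=X^{aw\#}\cap(X^*\times\mathbb{P})=\emptyset$ (so $4^\circ$ fails) and $0_X$ lies in both sets appearing in $2^\circ$; if $\cl\,K=X$, then $\cl\,S^0_{\cl\,K}=\mathbb{B}_X\supseteq\cl\,S_{\cl(\conv\,C)}\neq\emptyset$ (so $2^\circ$ fails) and the two strict inequalities of \eqref{separation_condition_inequalities} collide on $-\cl(\conv\,C)\setminus\{0_X\}\subseteq -\cl\,K\setminus\{0_X\}$ (so $4^\circ$ fails). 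Hence in the degenerate cases both assertions are false and the equivalence holds vacuously, while in all remaining cases your application of Proposition~\ref{th:sep_cones_Banach_3} is indeed legitimate; without this short verification the chain is incomplete.
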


\begin{proposition}[{\cite[Th. 5.3]{gunther2023nonlinear_reflexive}}] \label{th:separationBanachSpacePureConvex} Assume that $K$ is  closed and convex, and that $\cl\, S_{C}$ is weakly compact. Then, the following conditions are equivalent:
\begin{itemize}
    \item[$1^\circ$] $(\cl\,S_K^0) \cap (\cl\, S_{C}) = \emptyset$.		
    \item[$2^\circ$] $K \cap \cl({\rm conv}\,C) = \{0_X\}$ and $0_X \notin \cl\, S_{C}$.
\end{itemize}
If the set $\cl\,S_{\cl({\rm conv}\, C)}$ is weakly compact, then any of the assertions $1^\circ$ and $2^\circ$ is equivalent to
\begin{itemize}
    \item[$3^\circ$] $0_X \notin \cl\, S_{C}$ and there exists $x^* \in X^* \setminus \{0_{X^*}\}$ such that \eqref{eq:sep_lin_cones} with $\cl({\rm conv}\,C)$ in the role of $C$ is valid.
\end{itemize}
\end{proposition}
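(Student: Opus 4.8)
The plan is to prove $1^\circ\Longleftrightarrow 2^\circ$ by a direct geometric argument on the norm-bases, and then to obtain the equivalence with $3^\circ$ (under the additional weak compactness hypothesis) by reducing to the linear cone separation result in Proposition~\ref{prop:linear_cone_separation}. I expect the forward direction $1^\circ\Longrightarrow 2^\circ$ to be the only delicate point; everything else is either symmetric or an immediate application of an already established result.

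For $1^\circ\Longrightarrow 2^\circ$, I would first note that $0_X\in S_K^0\subseteq\cl\,S_K^0$, so the disjointness in $1^\circ$ forces $0_X\notin\cl\,S_C$, which is the second half of $2^\circ$. For the first half, suppose for contradiction that $k\in K\cap\cl(\conv\,C)$ with $k\neq 0_X$. Since $0_X\notin\cl\,S_C$, Lemma~\ref{lem:clconvCandclSC}~($2^\circ$) applies and yields that $\cl\,S_C$ is a bounded convex base of $\cl(\conv\,C)$ with $\cl(\conv\,C)\setminus\{0_X\}=\mathbb{P}\cdot\cl\,S_C$; hence $k=\lambda c_0$ for some $\lambda>0$ and $c_0\in\cl\,S_C$. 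The crucial observation is that $\cl\,S_C\subseteq\mathbb{B}_X$, so $\|c_0\|\leq 1$. Writing $\hat k:=k/\|k\|\in B_K$, we get $c_0=\|c_0\|\,\hat k$ with $\|c_0\|\in(0,1]$, so $c_0$ lies on the segment $[0_X,\hat k]\subseteq S_K^0$. Thus $c_0\in(\cl\,S_C)\cap(\cl\,S_K^0)$, contradicting $1^\circ$, and therefore $K\cap\cl(\conv\,C)=\{0_X\}$.

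Conversely, for $2^\circ\Longrightarrow 1^\circ$, I would take any $z\in(\cl\,S_K^0)\cap(\cl\,S_C)$ and derive a contradiction. Since $0_X\notin\cl\,S_C$ we have $z\neq 0_X$, while $z\in\cl\,S_C\subseteq\cl(\conv\,C)$; moreover, by Lemma~\ref{lem:clconvCandclSC}~($1^\circ$) together with the closedness and convexity of $K$, one has $z\in\cl\,S_K^0\subseteq\mathbb{R}_+\cdot\cl\,S_K^0=\cl(\conv\,K)=K$. Hence $z\in(K\cap\cl(\conv\,C))\setminus\{0_X\}$, contradicting $2^\circ$. This settles $1^\circ\Longleftrightarrow 2^\circ$ (and, notably, uses only the base structure, not the weak compactness of $\cl\,S_C$).

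Finally, under the additional assumption that $\cl\,S_{\cl(\conv\,C)}$ is weakly compact, I would prove $2^\circ\Longleftrightarrow 3^\circ$. For $2^\circ\Longrightarrow 3^\circ$, recall from Remark~\ref{rem:clconvKwell-based} (equation \eqref{eq:KwellbasedIntK+neqEmpty}) that $0_X\notin\cl\,S_C\iff 0_X\notin\cl\,S_{\cl(\conv\,C)}$, and that this makes $\cl(\conv\,C)$ a non-trivial, closed, convex (indeed well-based, hence pointed) cone. Then I apply Proposition~\ref{prop:linear_cone_separation} with $\cl(\conv\,C)$ in the role of $C$ and the non-trivial closed convex cone $K$ in the role of $K$: the hypotheses ($\cl\,S_{\cl(\conv\,C)}$ weakly compact, $0_X\notin\cl\,S_{\cl(\conv\,C)}$, and $\cl(\conv\,C)\cap K=\{0_X\}$ from $2^\circ$) deliver some $x^*\in X^*\setminus\{0_{X^*}\}$ satisfying \eqref{eq:sep_lin_cones} with $\cl(\conv\,C)$ in the role of $C$, which is exactly $3^\circ$. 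For $3^\circ\Longrightarrow 2^\circ$, any $z\in K\cap\cl(\conv\,C)$ satisfies $x^*(z)\geq 0$ by the first inequality in \eqref{eq:sep_lin_cones}; if $z\neq 0_X$, then $z\in\cl(\conv\,C)\setminus\{0_X\}$ would force $x^*(z)<0$, a contradiction, so $z=0_X$, giving $2^\circ$. The main obstacle, as anticipated, is the step in $1^\circ\Longrightarrow 2^\circ$ of recognizing that $\cl\,S_C\subseteq\mathbb{B}_X$ rescales the putative common ray point into the truncated set $S_K^0=\conv(\{0_X\}\cup B_K)$; once this is seen, the remaining implications are routine.
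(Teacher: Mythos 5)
Your proof is correct; note, however, that there is no internal proof to compare it against, since the paper states this proposition as a recalled result, citing \cite[Th. 5.3]{gunther2023nonlinear_reflexive}, and gives no argument of its own. Your route is a legitimate, self-contained reconstruction from the paper's own toolkit: the rescaling argument for $1^\circ\Rightarrow 2^\circ$ (any nonzero $k\in K\cap\cl(\conv\,C)$ equals $\lambda c_0$ with $c_0\in\cl\,S_C\subseteq\mathbb{B}_X$ by Lemma~\ref{lem:clconvCandclSC}~($2^\circ$), whence $c_0=\|c_0\|\,\hat{k}$ lies on the segment $[0_X,\hat{k}]\subseteq S_K^0$) is sound; the reverse direction via $\cl\,S_K^0\subseteq\cl\,K=K$ for closed convex $K$ is sound; and reducing $3^\circ$ to Proposition~\ref{prop:linear_cone_separation}, with \eqref{eq:KwellbasedIntK+neqEmpty} used to transfer $0_X\notin\cl\,S_C$ into $0_X\notin\cl\,S_{\cl(\conv\,C)}$, is the natural move --- just make fully explicit that $\cl(\conv\,C)$ is non-trivial (well-based $\Rightarrow$ pointed $\Rightarrow$ $\neq X$ since $X\neq\{0_X\}$), as that is the one hypothesis of Proposition~\ref{prop:linear_cone_separation} that is easy to overlook, though your parenthetical does cover it. Two further observations on the comparison: first, your proof of $1^\circ\Leftrightarrow 2^\circ$ uses no weak compactness at all, and this is consistent with the paper, since Proposition~\ref{prop:basic separation_BP_a} (the implications $3^\circ\Rightarrow 4^\circ\Leftrightarrow 5^\circ$ together with $5^\circ\Rightarrow 3^\circ$ under convexity of $K$) already yields exactly this equivalence for closed convex $K$ without any compactness; so your segment argument is in effect a direct re-proof of that special case, and the hypothesis that $\cl\,S_C$ be weakly compact is never actually invoked anywhere in your argument. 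Second, your derivation of $3^\circ$ buys a small economy over citing the external theorem wholesale: it exposes that the only place the additional assumption (weak compactness of $\cl\,S_{\cl(\conv\,C)}$) enters is the single application of the linear separation result in Proposition~\ref{prop:linear_cone_separation}, while $3^\circ\Rightarrow 2^\circ$ is purely elementary.
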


\begin{proposition}[{\cite[Th. 3.1]{GarciaCastano2023}}] \label{lem:GarciaCastano2023}
The following assertions are equivalent:
   \begin{itemize}
		\item[$1^\circ$] $0_X \notin \cl(S_{C} - S_{\bd\,K}^0)$.  
        \item[$2^\circ$] There exist $\delta_2 > \delta_1 > 0$ and $x^* \in X^*$ such that, for any $\alpha \in (\delta_1, \delta_2)$, we have $(x^*, \alpha) \in C^{a\#}$, and \eqref{separation_condition_inequalities} with $\cl({\rm conv}\,C)$ in the role of $C$ and $\bd\,K$ in the role of $K$ is valid.
   \end{itemize}  
\end{proposition}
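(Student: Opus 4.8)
The plan is to use Proposition~\ref{prop:basic separation} as the engine, applied to the two convex sets $\Omega^1 := S_{\bd\,K}^0$ and $\Omega^2 := S_C$. Observe that condition $1^\circ$ is exactly $0_X \notin \cl(\Omega^2 - \Omega^1)$, which by the equivalence $1^\circ \Leftrightarrow 2^\circ$ of Proposition~\ref{prop:basic separation} is the same as the existence of $x^* \in X^* \setminus \{0_{X^*}\}$ with
$$\beta := \sup_{x \in S_{\bd\,K}^0} x^*(x) < \inf_{x \in S_C} x^*(x) =: \gamma.$$
Since $0_X \in S_{\bd\,K}^0$ we have $\beta \geq 0$, whence $\gamma > \beta \geq 0$; in particular $x^*(b) \geq \gamma > 0$ for every $b \in B_C$, so $x^* \in C^\#$ and $0_X \notin \cl\,S_C$. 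The whole proof then reduces to translating this single strict \emph{linear} inequality into the \emph{norm-linear} separation required by $2^\circ$, and back.

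For $1^\circ \Rightarrow 2^\circ$ I would fix any $\delta_1, \delta_2$ with $\beta < \delta_1 < \delta_2 < \gamma$ (note $\delta_1 > 0$ automatically) and verify that every $\alpha \in (\delta_1, \delta_2)$ works. The $\bd\,K$-part is immediate: for $\tilde k \in (\bd\,K) \setminus \{0_X\}$ we have $\tilde k / \|\tilde k\| \in B_{\bd\,K} \subseteq S_{\bd\,K}^0$, so $x^*(\tilde k) / \|\tilde k\| \leq \beta < \alpha$. The $\cl(\conv\,C)$-part is the delicate one: here I would invoke Lemma~\ref{lem:clconvCandclSC} ($2^\circ$), which (using $0_X \notin \cl\,S_C$) guarantees $\cl(\conv\,C) \setminus \{0_X\} = \mathbb{P} \cdot \cl\,S_C$ with $\cl\,S_C$ a bounded convex base contained in $\mathbb{B}_X$. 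Writing $\tilde c = \lambda s$ with $\lambda > 0$, $s \in \cl\,S_C$, and using $x^*(s) \geq \gamma$ together with $\|s\| \leq 1$, one gets $x^*(\tilde c) / \|\tilde c\| = x^*(s)/\|s\| \geq x^*(s) \geq \gamma > \alpha$. This simultaneously yields the required strict norm-linear inequalities \eqref{separation_condition_inequalities} (with $\cl(\conv\,C)$ in the role of $C$ and $\bd\,K$ in the role of $K$) and the membership $(x^*,\alpha) \in C^{a\#}$, as $x^*(b) \geq \gamma > \alpha$ for $b \in B_C$ and $\alpha > 0$.

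For the converse $2^\circ \Rightarrow 1^\circ$ the essential device is to exploit that the inequalities hold on a whole \emph{interval} of levels, not just one. I would pick $\alpha_1 < \alpha_2$ inside $(\delta_1, \delta_2)$. From the $\cl(\conv\,C)$-inequality at level $\alpha_2$, restricted to $B_C$ (where $\|\cdot\| = 1$) and then extended to $S_C = \conv(B_C)$ by convexity, I obtain $\inf_{S_C} x^* \geq \alpha_2$; from the $\bd\,K$-inequality at level $\alpha_1$, together with $\alpha_1 > 0$ and the expansion of a generic point of $S_{\bd\,K}^0$ as a convex combination of $0_X$ and points of $B_{\bd\,K}$, I obtain $\sup_{S_{\bd\,K}^0} x^* \leq \alpha_1$. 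Hence $\sup_{S_{\bd\,K}^0} x^* \leq \alpha_1 < \alpha_2 \leq \inf_{S_C} x^*$, which is precisely assertion $2^\circ$ of Proposition~\ref{prop:basic separation}; its implication $2^\circ \Rightarrow 1^\circ$ then delivers $0_X \notin \cl(S_C - S_{\bd\,K}^0)$, i.e.\ condition $1^\circ$.

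The main obstacle, and the step demanding care, is the passage between the linear-functional inequality living on the norm-bases $S_C, S_{\bd\,K}^0$ and the norm-linear inequality \eqref{separation_condition_inequalities} living on the full cones $\cl(\conv\,C)$ and $\bd\,K$. In the forward direction this requires the base description of $\cl(\conv\,C)$ from Lemma~\ref{lem:clconvCandclSC} and the uniform bound $\cl\,S_C \subseteq \mathbb{B}_X$ in order to convert $x^*(s) \geq \gamma$ into the positively homogeneous estimate $x^*(\tilde c) \geq \gamma\,\|\tilde c\|$. In the converse direction the strictness in $1^\circ$ \emph{cannot} be recovered from a single level $\alpha$ (one only gets $\sup \leq \alpha \leq \inf$), so the interval $(\delta_1,\delta_2)$ of admissible levels is indispensable to prise the two bounds apart; this is the reason the statement of $2^\circ$ is phrased with an interval rather than a single $\alpha$.
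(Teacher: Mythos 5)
Your proof is correct, but note that the paper itself contains no proof of Proposition \ref{lem:GarciaCastano2023}: it is recalled from \cite[Th. 3.1]{GarciaCastano2023} without argument, so there is nothing in-paper to compare against line by line. What you have produced is an independent, self-contained proof, and it follows exactly the methodology the paper deploys for its sibling result Theorem \ref{th:non-sym_sep_theorem_1}: Proposition \ref{prop:basic separation} as the linear-separation engine on the convex sets $S_C$ and $S_{\bd\,K}^0$, Lemma \ref{lem:clconvCandclSC} ($2^\circ$) together with $\cl\,S_C \subseteq \mathbb{B}_X$ to homogenize the inequality $x^*(s) \geq \gamma$ on the base into $x^*(\tilde c) \geq \gamma\,\|\tilde c\|$ on all of $\cl(\conv\,C)$, and Proposition \ref{prop:properties_augmented_dual_cone}-type membership checks for $(x^*,\alpha)$. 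The difference is that in Theorem \ref{th:non-sym_sep_theorem_1} the paper outsources the converse direction to Proposition \ref{lem:new_lemma_strict_sep_conditions} (itself only cited from \cite{gunther2023nonlinear_reflexive}), whereas you prove the converse directly via the two-level device $\alpha_1 < \alpha_2$ inside $(\delta_1,\delta_2)$; this is exactly the right idea, and your closing observation --- that a single level only yields $\sup_{S_{\bd K}^0} x^* \leq \alpha \leq \inf_{S_C} x^*$, so the interval is what restores the strict inequality needed for $0_X \notin \cl(S_C - S_{\bd\,K}^0)$ --- is precisely the reason the statement is phrased with an interval. Both directions check out (the step $x^*(\tilde c)/\|\tilde c\| = x^*(s)/\|s\| \geq x^*(s) \geq \gamma$ legitimately uses $0 < \|s\| \leq 1$ and $x^*(s) > 0$). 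One cosmetic omission: when you feed $\sup_{S_{\bd K}^0} x^* \leq \alpha_1 < \alpha_2 \leq \inf_{S_C} x^*$ back into Proposition \ref{prop:basic separation} ($2^\circ \Rightarrow 1^\circ$), that assertion presupposes $x^* \neq 0_{X^*}$; this is immediate because $S_C \neq \emptyset$ and $\inf_{S_C} x^* \geq \alpha_2 > 0$, but it deserves a sentence.
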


%=========================================================
\section{On Symmetric and Non-Symmetric Separation Conditions Involving Cones} \label{sec:sym_and_non-sym_separation} 
%=========================================================

Given two non-trivial cones $C\subseteq X$ and $K \subseteq X$ (with the norm-bases $B_{C}$ and $B_K$) in the real normed space ($X$, $||\cdot||$), we are going to study relationships between symmetrical and non-symmetrical separation conditions that involve the sets $\cl\,S_C^0$, $\cl\,S_K^0$, $\cl\,S_C$ and $\cl\,S_K$, respectively.

%---------------------------------------------------------
\subsection{Non-Symmetric Separation}
%---------------------------------------------------------

First, we present our general result involving our main non-symmetric separation conditions.

\begin{proposition}
    \label{prop:basic separation_BP_a}
    Consider the following assertions:
    \begin{itemize}
    \item[$1^\circ$] $0_X \notin \cl(S_{C} - S_K^0)$.
    \item[$2^\circ$] There exists $x^* \in X^* \setminus \{0_{X^*}\}$ such that $\sup_{k \in \cl\,S_K^0} x^*(k) < \inf_{c \in \cl\, S_{C}} x^*(c)$.
    \item[$3^\circ$] $(\cl\,S_{K}^0) \cap (\cl\, S_{C}) = \emptyset$.
    \item[$4^\circ$] $(\cl\,K) \cap (\cl\, S_{C}) = \emptyset$.
    \item[$5^\circ$] $(\cl\,K) \cap \cl({\rm conv}\,C) = \{0_X\}$  and $0_X \notin \cl\, S_{C}$.
    \end{itemize}
    Then, $1^\circ \Longleftrightarrow 2^\circ \Longrightarrow 3^\circ \Longrightarrow 4^\circ \Longleftrightarrow 5^\circ$.
    If $\cl\,S_K^0$ or $\cl\,S_{C}$ is weakly compact (e.g. if $X$ is also reflexive), then $3^\circ \Longrightarrow 1^\circ$. Moreover, if $K$ is convex, then $5^\circ \Longrightarrow 3^\circ$.
\end{proposition}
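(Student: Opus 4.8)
The plan is to derive the entire chain by applying the key separation result, Proposition \ref{prop:basic separation}, to the two non-empty closed convex sets $\cl\,S_K^0$ and $\cl\,S_C$, and then to convert the three ``disjointness'' statements $3^\circ$, $4^\circ$, $5^\circ$ into one another by elementary cone/base manipulations supported by Lemma \ref{lem:clconvCandclSC}.

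First I would settle $1^\circ \Longleftrightarrow 2^\circ \Longrightarrow 3^\circ$ together with the weak-compactness implication $3^\circ \Longrightarrow 1^\circ$ in one stroke. I apply Proposition \ref{prop:basic separation} with $\Omega^1 := \cl\,S_K^0$ and $\Omega^2 := \cl\,S_C$ (both non-empty, since $C,K$ are non-trivial, and convex as closures of convex hulls). Its assertion $2^\circ$ reads $\sup_{k \in \cl\,S_K^0} x^*(k) < \inf_{c \in \cl\,S_C} x^*(c)$, i.e.\ exactly our $2^\circ$; its assertion $1^\circ$ reads $0_X \notin \cl(\cl\,S_C - \cl\,S_K^0)$, which by Remark \ref{rmk: equivalent_condition} is equivalent to our $1^\circ$; and its assertion $3^\circ$ is precisely our $3^\circ$. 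Thus Proposition \ref{prop:basic separation} delivers $1^\circ \Longleftrightarrow 2^\circ \Longrightarrow 3^\circ$, and since $\Omega^1, \Omega^2$ are closed, the hypothesis that one of them is weakly compact yields $3^\circ \Longrightarrow 2^\circ \Longleftrightarrow 1^\circ$.

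The substantive step is $3^\circ \Longrightarrow 4^\circ$, which I expect to be the main obstacle, since it is the only place where the gap between $\cl\,S_K^0$ and $\cl\,K$ must be bridged by a radial rescaling. Assuming $3^\circ$, note first that $0_X \in S_K^0 \subseteq \cl\,S_K^0$ forces $0_X \notin \cl\,S_C$, and that $\cl\,S_C \subseteq \mathbb{B}_X$ because $B_C \subseteq \mathbb{S}_X$. Suppose, for contradiction, that some $y \in (\cl\,K) \cap (\cl\,S_C)$; then $y \neq 0_X$ and $\|y\| \leq 1$. Writing $y = \lim_n k_n$ with $k_n \in K \setminus \{0_X\}$ for large $n$, I get $k_n/\|k_n\| \in B_K$ and $k_n/\|k_n\| \to y/\|y\|$, so $y/\|y\| \in \cl\,B_K \subseteq \cl\,S_K^0$. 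Since $0_X \in \cl\,S_K^0$ and $\cl\,S_K^0$ is convex, the segment $[0_X, y/\|y\|]$ lies in $\cl\,S_K^0$; as $\|y\| \in (0,1]$, the point $y = \|y\| \cdot (y/\|y\|)$ sits on this segment, whence $y \in (\cl\,S_K^0) \cap (\cl\,S_C)$, contradicting $3^\circ$.

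Finally I would establish $4^\circ \Longleftrightarrow 5^\circ$ and, under convexity of $K$, the implication $5^\circ \Longrightarrow 3^\circ$. For $4^\circ \Longrightarrow 5^\circ$: $0_X \in \cl\,K$ gives $0_X \notin \cl\,S_C$ as above, so Lemma \ref{lem:clconvCandclSC} ($2^\circ$) yields $\cl(\conv\,C) \setminus \{0_X\} = \mathbb{P} \cdot \cl\,S_C$; if $y \in (\cl\,K) \cap \cl(\conv\,C)$ with $y \neq 0_X$, then $y = \lambda c$ with $\lambda > 0$, $c \in \cl\,S_C$, and since $\cl\,K$ is a cone, $c = \lambda^{-1} y \in (\cl\,K) \cap (\cl\,S_C) = \emptyset$, a contradiction; hence $5^\circ$. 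Conversely $5^\circ \Longrightarrow 4^\circ$ is immediate from $\cl\,S_C \subseteq \cl(\conv\,C) \setminus \{0_X\}$ (using $0_X \notin \cl\,S_C$), which gives $(\cl\,K) \cap (\cl\,S_C) \subseteq ((\cl\,K) \cap \cl(\conv\,C)) \setminus \{0_X\} = \emptyset$. For $5^\circ \Longrightarrow 3^\circ$ when $K$ is convex, I use that $S_K^0 = \conv(\{0_X\} \cup B_K) \subseteq K$ by convexity, hence $\cl\,S_K^0 \subseteq \cl\,K$, so the already-proven $5^\circ \Longrightarrow 4^\circ$ gives $(\cl\,S_K^0) \cap (\cl\,S_C) \subseteq (\cl\,K) \cap (\cl\,S_C) = \emptyset$, i.e.\ $3^\circ$.
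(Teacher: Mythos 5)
Your proof is correct, and it diverges from the paper's in a way worth noting. The first half is essentially identical: like the paper, you apply Proposition \ref{prop:basic separation} to the non-empty, closed, convex sets $\cl\,S_K^0$ and $\cl\,S_C$ (with Remark \ref{rmk: equivalent_condition} bridging $0_X \notin \cl(\cl\,S_C - \cl\,S_K^0)$ and $0_X \notin \cl(S_C - S_K^0)$) to get $1^\circ \Longleftrightarrow 2^\circ \Longrightarrow 3^\circ$ and, under weak compactness, $3^\circ \Longrightarrow 1^\circ$. The second half is where you differ: the paper first uses Lemma \ref{lem:no_cl_vs_no_cl} to rewrite $3^\circ$ as $(\cl\,S_{\cl\,K}^0) \cap (\cl\,S_{\cl\,C}) = \emptyset$ and then delegates $3^\circ \Longrightarrow 4^\circ \Longleftrightarrow 5^\circ$ and (for convex $K$) $5^\circ \Longrightarrow 3^\circ$ entirely to the external reference \cite[Sec. 4]{gunther2023nonlinear_reflexive}, whereas you prove these implications from scratch: $3^\circ \Longrightarrow 4^\circ$ by the radial-rescaling argument ($y/\|y\| \in \cl\,B_K \subseteq \cl\,S_K^0$, then convexity of $\cl\,S_K^0$ together with $0_X \in \cl\,S_K^0$ and $\cl\,S_C \subseteq \mathbb{B}_X$ puts $y$ itself in $\cl\,S_K^0$); $4^\circ \Longleftrightarrow 5^\circ$ via the base identity $\cl(\conv\,C)\setminus\{0_X\} = \mathbb{P}\cdot \cl\,S_C$ from Lemma \ref{lem:clconvCandclSC} ($2^\circ$) and the cone property of $\cl\,K$; and $5^\circ \Longrightarrow 3^\circ$ for convex $K$ via $\cl\,S_K^0 \subseteq \cl\,K$. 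Your route buys self-containedness — the reader never has to consult the cited work, and the mechanism behind the equivalences (rescaling into the convex hull of the norm-base) is made explicit — at the cost of a longer write-up; the paper's route is shorter but opaque at exactly the steps you work out in detail.
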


\begin{proof} 
    The equivalence $1^\circ \iff 2^\circ$, the implication $1^\circ \Longrightarrow 3^\circ$, and, under the weak compactness assumption, the implication $3^\circ \Longrightarrow 1^\circ$ follow from Proposition \ref{prop:basic separation}. 
    Note that Lemma \ref{lem:no_cl_vs_no_cl} shows that the assertion $3^\circ$ is nothing else than $(\cl\,S_{\cl\,K}^0) \cap (\cl\, S_{\cl\,C}) = \emptyset$. Then, the implications
    $3^\circ \Longrightarrow 4^\circ \Longleftrightarrow 5^\circ$ and, under the convexity of $K$, the implication $5^\circ \Longrightarrow 3^\circ$ follow from \cite[Sec. 4]{gunther2023nonlinear_reflexive}.
    \qed
\end{proof}

The next result shows how the non-symmetric separation condition given in Proposition \ref{prop:basic separation_BP_a} ($1^\circ$) involving the non-trivial cones $C$ and $K$ is related to corresponding separation conditions involving the boundaries / closures of $C$ and $K$.

\begin{proposition}
    \label{prop:basic separation_BP_bd}
    The following assertions are equivalent:
    \begin{itemize}
    \item[$1^\circ$] $0_X \notin \cl(S_{C} - S_{K}^0)$. 
     \item[$2^\circ$] $0_X \notin \cl(S_{\cl\,C} - S_{\cl\,K}^0)$.
    \item[$3^\circ$] $0_X \notin \cl(S_{\bd\,C} - S_{\bd\,K}^0)$ and  $K\cap C = \{0_X\}$.
    \item[$4^\circ$] $0_X \notin \cl(S_{\bd\,C} - S_{\cl\,K}^0)$ and  $K\cap C = \{0_X\}$.
    \item[$5^\circ$] $0_X \notin \cl(S_{\cl\,C} - S_{\bd\,K}^0)$ and  $K\cap C = \{0_X\}$.
    \end{itemize}
\end{proposition}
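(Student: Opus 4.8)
The plan is to treat $1^\circ$ as a hub and reduce everything to one genuinely hard implication, organizing the remaining links through distances of the form $d(\cl\,S_{\ast C},\cl\,S_{\ast K}^0)$. First I would record the cheap facts. By Lemma~\ref{lem:no_cl_vs_no_cl} we have $\cl\,S_{\cl\,C}=\cl\,S_C$ and $\cl\,S_{\cl\,K}^0=\cl\,S_K^0$, and by Remark~\ref{rmk: equivalent_condition} the property $0_X\notin\cl(\Omega^2-\Omega^1)$ depends only on the closures $\cl\,\Omega^1,\cl\,\Omega^2$; combining these gives $1^\circ\Leftrightarrow2^\circ$ at once. Next, from $B_{\bd\,C}\subseteq B_{\cl\,C}$ and $B_{\bd\,K}\subseteq B_{\cl\,K}$ together with Lemma~\ref{lem:no_cl_vs_no_cl} one gets $\cl\,S_{\bd\,C}\subseteq\cl\,S_C$ and $\cl\,S_{\bd\,K}^0\subseteq\cl\,S_K^0$. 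Since enlarging either set can only shrink the distance of the difference set to the origin, positivity of $d(\cl\,S_C,\cl\,S_K^0)$ (i.e.\ $1^\circ$) forces positivity of the distances in $3^\circ,4^\circ,5^\circ$, while positivity of the distance in $4^\circ$ or in $5^\circ$ forces that of $3^\circ$. Finally $1^\circ$ forces $K\cap C=\{0_X\}$, for otherwise a common unit vector would lie in both $S_C$ and $S_K^0$, putting $0_X$ in their difference. These remarks yield $1^\circ\Rightarrow2^\circ,3^\circ,4^\circ,5^\circ$ as well as $4^\circ\Rightarrow3^\circ$ and $5^\circ\Rightarrow3^\circ$, so the whole proposition collapses to the single implication $3^\circ\Rightarrow1^\circ$.

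For $3^\circ\Rightarrow1^\circ$ I would feed the separation $0_X\notin\cl(S_{\bd\,C}-S_{\bd\,K}^0)$ into Proposition~\ref{prop:basic separation} (the boundary norm-bases being non-empty as $C,K$ are non-trivial) to obtain $x^*\in X^*\setminus\{0_{X^*}\}$ with $\beta:=\sup_{S_{\bd\,K}^0}x^*<\gamma:=\inf_{S_{\bd\,C}}x^*$. Because $0_X\in S_{\bd\,K}^0$ we have $\beta\geq0$, and since $B_{\bd\,C}\subseteq\mathbb{S}_X$ we have $\gamma\leq\|x^*\|_*$, so every $\alpha\in(\beta,\gamma)$ lies in $(0,\|x^*\|_*)$. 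For such $\alpha$ the hypotheses of Lemma~\ref{lemma_relative_position} hold for both cones: $\alpha<\gamma=\inf_{S_{\bd\,C}}x^*$ triggers part $1^\circ$ for $C$, and $\sup_{S_{\bd\,K}}x^*\leq\beta<\alpha$ triggers part $2^\circ$ for $K$. Read through Remark~\ref{rem:C_BP_cone}, these give two dichotomies: either $\cl\,C\subseteq C(x^*,\alpha)$ or $X\setminus C(x^*,\alpha)\subseteq\intt\,C$; and either $(\cl\,K)\setminus\{0_X\}\subseteq X\setminus C(x^*,\alpha)$ or $C(x^*,\alpha)\setminus\{0_X\}\subseteq\intt\,K$.

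The heart of the argument is excluding the wrong branches using $K\cap C=\{0_X\}$ and non-triviality. If $X\setminus C(x^*,\alpha)\subseteq\intt\,C$ and $(\cl\,K)\setminus\{0_X\}\subseteq X\setminus C(x^*,\alpha)$, then $K\setminus\{0_X\}\subseteq C$, contradicting $K\cap C=\{0_X\}$; if $\cl\,C\subseteq C(x^*,\alpha)$ and $C(x^*,\alpha)\setminus\{0_X\}\subseteq\intt\,K$, then $C\setminus\{0_X\}\subseteq K$, again a contradiction; and the joint-bad branch $X\setminus C(x^*,\alpha)\subseteq\intt\,C$ with $C(x^*,\alpha)\setminus\{0_X\}\subseteq\intt\,K$ is killed by choosing $p\in(\bd\,C(x^*,\alpha))\setminus\{0_X\}$ (non-empty by Lemma~\ref{lem:properties_BP}): then $p\in\intt\,K$ while $p$ is a limit of points of $X\setminus C(x^*,\alpha)\subseteq\intt\,C$, producing nonzero elements of $C\cap K$. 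Hence for every $\alpha\in(\beta,\gamma)$ the good branches hold: $\cl\,C\subseteq C(x^*,\alpha)$ and $(\cl\,K)\setminus\{0_X\}\subseteq X\setminus C(x^*,\alpha)$.

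To finish I would let $\alpha\uparrow\gamma$ and $\alpha\downarrow\beta$ in these inclusions: evaluating on unit vectors gives $\inf_{S_C}x^*=\inf_{B_C}x^*\geq\gamma$ and $\sup_{S_K^0}x^*=\max\{0,\sup_{B_K}x^*\}\leq\beta$, whence $\sup_{S_K^0}x^*\leq\beta<\gamma\leq\inf_{S_C}x^*$; Proposition~\ref{prop:basic separation} then returns $0_X\notin\cl(S_C-S_K^0)$, i.e.\ $1^\circ$. The main obstacle is precisely $3^\circ\Rightarrow1^\circ$: passing from a separation of the thin sets $S_{\bd\,C},S_{\bd\,K}^0$ to one of the full sets $S_C,S_K^0$ is not monotone and genuinely requires the relative-position dichotomy of Lemma~\ref{lemma_relative_position} together with the cone-intersection hypothesis. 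The subtle point inside it is that fixing a single $\alpha$ only yields the non-strict chain $\sup_{S_K^0}x^*\leq\alpha\leq\inf_{S_C}x^*$, so strictness (needed for $1^\circ$) must be recovered by exploiting the entire open interval $(\beta,\gamma)$ in the limiting step.
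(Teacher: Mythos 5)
Your proof is correct and follows essentially the same route as the paper: the same reduction of everything to the hard implication $3^\circ \Rightarrow 1^\circ$, the same use of Proposition \ref{prop:basic separation} to produce $x^*$ separating the boundary hulls, the same dichotomies from Lemma \ref{lemma_relative_position} (via Remark \ref{rem:C_BP_cone}) with the wrong branches excluded through $K\cap C=\{0_X\}$, and the same limiting argument over the whole interval of admissible $\alpha$ to recover the strict inequality needed for $1^\circ$. The only cosmetic differences are that you run both dichotomies at a single level $\alpha$ (the paper uses two levels $\alpha_1<\alpha_2$) and that you exclude bad branch \emph{combinations} — handling the doubly-bad case via a nonzero point of $\bd\,C(x^*,\alpha)$ from Lemma \ref{lem:properties_BP} — whereas the paper rules out each bad branch independently by pushing the boundary of the other cone into it.
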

  
\begin{proof}
Let us first prove $1^\circ \Longleftrightarrow 2^\circ$. The implication $2^\circ \Longrightarrow 1^\circ$ is obvious.
By Remark \ref{rmk: equivalent_condition}, $0_X \notin \cl(S_C - S_K^0) \Longleftrightarrow 0_X \notin \cl(\cl\,S_C - \cl\,S_K^0)$, and from Lemma~ \ref{lem:no_cl_vs_no_cl} we have  $ S_{\cl\,C} \subseteq \cl\,S_C$ and $S_{\cl\,K}^0 \subseteq \cl\,S_K^0$, hence  $0_X \notin \cl(\cl\,S_C - \cl\,S_K^0) \Longrightarrow  0_X \notin \cl(S_{\cl\,C} - S_{\cl\,K}^0)$. This shows that $1^\circ \Longrightarrow 2^\circ$.

We are going to prove $2^\circ \Longrightarrow 3^\circ$. Suppose that $2^\circ$ is valid. Since $S_{\bd\,C} \subseteq S_{\cl\,C}$ and $S_{\bd\,K}^0 \subseteq S_{\cl\,K}^0$, we have $\cl(S_{\bd\,C} - S_{\bd\,K}^0) \subseteq \cl(S_{\cl\,C} - S_{\cl\,K}^0)$, hence $0_X \notin \cl(S_{\cl\,C} - S_{\cl\,K}^0) \Longrightarrow 0_X \notin \cl(S_{\bd\,C} - S_{\bd\,K}^0)$.
By Proposition~ \ref{prop:basic separation} ($1^\circ \Longrightarrow 3^\circ$), we infer that 
$(\cl\,S_{\cl\,C}) \cap (\cl\,S_{\cl\,K}^0) = \emptyset$,
which implies $(\cl\,K) \cap (\cl({\rm conv}\,C)) = \{0_X\}$ (see \cite[Sec. 4]{gunther2023nonlinear_reflexive}). In particular, this shows that $K \cap C = \{0_X\}$.
We conclude that $3^\circ$ is valid.

Let us prove that $3^\circ \Longrightarrow 1^\circ$. Suppose that $3^\circ$ is valid, i.e., $0_X \notin \cl(S_{\bd C} - S_{\bd K}^0)$ and $K \cap C = \{0_X\}$. By Proposition \ref{prop:basic separation} combined with  $0_X \in S_{\bd K}^0$, there exists $x^* \in X^*$ such that  $0\leq \sup_{x \in S_{\bd K}^0} x^*(x) < \inf_{y \in S_{\bd C}} x^*(y)$. 
Furthermore, we have  $\sup_{x \in S_{\bd K}} x^*(x) \leq \sup_{x \in S_{\bd K}^0} x^*(x)$.  
Denote $L_K := \sup_{x \in S_{\bd K}} x^*(x)$, $L_K^0 := \sup_{x \in S_{\bd K}^0} x^*(x)$, $I_C := \inf_{y \in S_{\bd C}} x^*(y)$, and fix $\alpha_1, \alpha_2 \in \mathbb{R}$ such that $L_K \leq L_K^0 < \alpha_1 < \alpha_2 < I_C$. Note that 
$$0 < \alpha_1 < \alpha_2 < I_C \leq\sup_{y \in S_{\bd C}} x^*(y) \leq \sup_{y \in \mathbb{B}_X} |x^*(y)| = ||x^*||_*.$$

On the one hand, by Lemma \ref{lemma_relative_position} ($1^\circ$) and Remark \ref{rem:C_BP_cone}, we have either $C \subseteq C(x^*, \alpha_2)$ or $X \setminus C(x^*, \alpha_2) \subseteq \intt C$. However, the latter inclusion is false; otherwise (taking into account that $L_K < \alpha_2$ and Lemma \ref{lem:properties_BP} ($1^\circ$)) we get $$(\bd K) \setminus \{0_X\} \subseteq C^>(-x^*, -\alpha_2) = X \setminus C(x^*, \alpha_2) \subseteq \intt C.$$ Thus, for every $x \in \bd K$ with $x \neq 0_X$, there exists $y \in K \cap \intt C$, which contradicts the assumption that $K \cap C = \{0_X\}$.  

On the other hand, by Lemma \ref{lemma_relative_position} ($2^\circ$)  and Remark \ref{rem:C_BP_cone}, we have either $K \setminus \{0_X\} \subseteq X \setminus C(x^*, \alpha_1)$ or $C(x^*, \alpha_1) \setminus \{0_X\} \subseteq \intt K$. Also here the latter inclusion is false; otherwise (taking into account that $\alpha_1 < I_C$) we have  
$$(\bd C) \setminus \{0_X\} \subseteq C^>(x^*, \alpha_1) \subseteq C(x^*, \alpha_1) \setminus \{0_X\} \subseteq \intt K.$$
Thus, for every $x \in \bd C$ with $x \neq 0_X$, there exists $y \in (\intt K) \cap C$, which contradicts the assumption that $K \cap C = \{0_X\}$.
 
Therefore, we conclude that $C \subseteq C(x^*, \alpha_2)$ and $K \setminus \{0_X\}\subseteq X \setminus C(x^*, \alpha_1)$. 
Consequently, $B_C \subseteq \{x \in X \mid x^*(x) \geq \alpha_2\}$ and $B_K \subseteq \{x \in X \mid x^*(x) < \alpha_1\}$, hence $S_C - S_K \subseteq \{x \in X \mid x^*(x) \geq \alpha_2\} - \{x \in X \mid x^*(x) < \alpha_1\}$.  
Now, since $0_X \in \{x \in X \mid x^*(x) < \alpha_1\}$, it follows that  $S_C - S_K^0 \subseteq \{x \in X \mid x^*(x) \geq \alpha_2\} - \{x \in X \mid x^*(x) < \alpha_1\}$.  
As $\alpha_1$ and $\alpha_2$ were chosen freely under the condition $L_K^0 <\alpha_1 < \alpha_2 < I_C$, we can strengthen this to  $S_C - S_K^0 \subseteq \{x \in X \mid x^*(x) \geq I_C\} - \{x \in X \mid x^*(x) \leq L_K^0\}$.  Finally, since $0_X \notin \cl(\{x \in X \mid x^*(x) \geq I_C\} - \{x \in X \mid x^*(x) \leq L_K^0\})$ taking into account that $I_C > L^0_K$, we conclude that $0_X \notin \cl(S_C - S_K^0)$, i.e., $1^\circ$ is valid. 

It remains to prove the equivalence of the assertions $3^\circ$, $4^\circ$ and $5^\circ$. Under the assumption $K \cap C = \{0_X\}$, we have 
\begin{align*}
    0_X \notin \cl(S_{\cl\,C} - S_{\cl\,K}^0) & \Longrightarrow 0_X \notin \cl(S_{\bd\,C} - S_{\cl\,K}^0) \\
    & \Longrightarrow 0_X \notin \cl(S_{\bd\,C} - S_{\bd\,K}^0)\\
    & \Longrightarrow 0_X \notin \cl(S_{\cl\,C} - S_{\cl\,K}^0),
\end{align*}
which shows that $3^\circ\Longleftrightarrow 4^\circ$,
and
\begin{align*}
    0_X \notin \cl(S_{\cl\,C} - S_{\cl\,K}^0) & \Longrightarrow 0_X \notin \cl(S_{\cl\,C} - S_{\bd\,K}^0)\\
    & \Longrightarrow 0_X \notin \cl(S_{\bd\,C} - S_{\bd\,K}^0)\\
    & \Longrightarrow 0_X \notin \cl(S_{\cl\,C} - S_{\cl\,K}^0),
\end{align*}
which shows that $3^\circ\Longleftrightarrow 5^\circ$.
\qed
\end{proof}

\begin{remark}\label{remark_both_conditions}
The condition ``$K \cap C = \{0_X\}$'' in Proposition~\ref{prop:basic separation_BP_bd} can be replaced by the condition ``$(\cl K) \cap (\cl({\rm conv}\,C)) = \{0_X\}$'' or by the condition ``$(\cl K) \cap (\cl C) = \{0_X\}$''. The proofs are analogous. Note that Propositions \ref{prop:basic separation_BP_a} and \ref{prop:basic separation_BP_bd} provide interesting insights into the separation conditions discussed in the non-symmetric separation approach in the works 
\cite{GarciaCastano2023,GarciaCastano2024}, \cite{gunther2023nonlinear,gunther2023nonlinear_reflexive} and \cite{Kasimbeyli2010}.
\end{remark}

%---------------------------------------------------------
\subsection{Symmetric strict separation}
%---------------------------------------------------------

The results in the following proposition can be used to connect the symmetric cone separation approach with the non-symmetric cone separation approach.

\begin{proposition}
    \label{prop:basic separation_BP_e}
    The following assertions are equivalent:
    \begin{itemize}
    \item[$1^\circ$] $0_X \notin \cl(S_{C} - S_K)$.
    \item[$2^\circ$] There exists $x^* \in X^* \setminus \{0_{X^*}\}$ such that $\sup_{k \in \cl\,S_{K}} x^*(k) < \inf_{c \in \cl\, S_{C}} x^*(c)$.
    \item[$3^\circ$] There exists $x^* \in X^* \setminus \{0_{X^*}\}$ such that \\$\sup_{k \in \cl\,S_{K}^0} x^*(k) < \inf_{c \in \cl\, S_{C}} x^*(c)$ or $\sup_{k \in \cl\,S_{K}} x^*(k) < \inf_{c \in \cl\, S_{C}^0} x^*(c)$.
    \item[$4^\circ$] $0_X \notin \cl(S_{C} - S_K^0)$ or $0_X \notin \cl(S_{C}^0 - S_K)$.
    \end{itemize}
\end{proposition}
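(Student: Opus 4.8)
The plan is to reduce each of the four assertions to the linear-separation dichotomy of Proposition~\ref{prop:basic separation}, applied to suitable pairs among the non-empty convex sets $S_C, S_C^0, S_K, S_K^0$, and then to close the chain of equivalences by an elementary sign analysis. First I would settle $1^\circ \Longleftrightarrow 2^\circ$: these are precisely assertions $1^\circ$ and $2^\circ$ of Proposition~\ref{prop:basic separation} for the pair $\Omega^1 := S_K$, $\Omega^2 := S_C$, where continuity of $x^*$ lets me replace $S_K, S_C$ by their closures inside the supremum and infimum, and Remark~\ref{rmk: equivalent_condition} takes care of the outer closure in $1^\circ$. In exactly the same way, applying Proposition~\ref{prop:basic separation} to the pairs $(\Omega^1, \Omega^2) = (S_K^0, S_C)$ and $(\Omega^1, \Omega^2) = (S_K, S_C^0)$ converts the two disjuncts of $4^\circ$ into the two disjuncts of $3^\circ$, yielding $3^\circ \Longleftrightarrow 4^\circ$. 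It then remains to prove $2^\circ \Longleftrightarrow 3^\circ$.

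The implication $3^\circ \Longrightarrow 2^\circ$ I would obtain directly from monotonicity of the supremum/infimum under the inclusions $S_K \subseteq S_K^0$ and $S_C \subseteq S_C^0$: in the first disjunct, $\sup_{k \in \cl\,S_K} x^*(k) \le \sup_{k \in \cl\,S_K^0} x^*(k) < \inf_{c \in \cl\,S_C} x^*(c)$; in the second, $\sup_{k \in \cl\,S_K} x^*(k) < \inf_{c \in \cl\,S_C^0} x^*(c) \le \inf_{c \in \cl\,S_C} x^*(c)$. In either case the same $x^*$ witnesses $2^\circ$.

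The core of the argument is $2^\circ \Longrightarrow 3^\circ$. The key observation is that, because $0_X \in S_K^0 \cap S_C^0$ and $x^*$ is linear (so that extremizing $x^*$ over a convex hull reduces to its generating set), one has
\begin{equation*}
\sup_{k \in \cl\,S_K^0} x^*(k) = \max\{0, a\}
\qquad\text{and}\qquad
\inf_{c \in \cl\,S_C^0} x^*(c) = \min\{0, b\},
\end{equation*}
where $a := \sup_{k \in \cl\,S_K} x^*(k)$ and $b := \inf_{c \in \cl\,S_C} x^*(c)$. Taking the witness $x^*$ supplied by $2^\circ$, so that $a < b$, I would show that this \emph{same} functional already verifies $3^\circ$, by a case distinction on the sign of $a$: if $a \ge 0$, then $\sup_{k \in \cl\,S_K^0} x^*(k) = a < b = \inf_{c \in \cl\,S_C} x^*(c)$, so the first disjunct holds; if $a < 0$, then $\inf_{c \in \cl\,S_C^0} x^*(c) = \min\{0, b\} > a = \sup_{k \in \cl\,S_K} x^*(k)$ (using $a < 0$ together with $a < b$), so the second disjunct holds. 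These two cases are exhaustive, which finishes the proof.

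I do not anticipate a serious obstacle; the whole statement is, in essence, the assertion that a strict linear gap $a < b$ is automatically positioned either entirely at/above $0$ or strictly below $0$, so that one of the origin-augmented separations must hold. The one point requiring genuine care is the bookkeeping around the augmented sets $S_K^0$ and $S_C^0$: establishing the two $\max/\min$ identities above and making sure they survive the passage to closures (again by continuity of $x^*$, exactly as in the $1^\circ \Longleftrightarrow 2^\circ$ step).
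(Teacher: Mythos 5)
Your proof is correct and follows essentially the same route as the paper: both reduce $1^\circ \Longleftrightarrow 2^\circ$ and $3^\circ \Longleftrightarrow 4^\circ$ to Proposition~\ref{prop:basic separation}, obtain $3^\circ \Longrightarrow 2^\circ$ by monotonicity of the supremum/infimum, and prove $2^\circ \Longrightarrow 3^\circ$ by keeping the same witness $x^*$ and analyzing the position of $0$ relative to the gap $(a,b)$. Your identities $\sup_{k \in \cl\,S_K^0} x^*(k) = \max\{0,a\}$ and $\inf_{c \in \cl\,S_C^0} x^*(c) = \min\{0,b\}$ merely repackage, in two cases, the paper's three-case argument ($0$ strictly inside the gap, $a \geq 0$, $b \leq 0$).
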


\begin{proof}
    The equivalence of $1^\circ$ and $2^\circ$, respectively, of $3^\circ$ and $4^\circ$ follows from Proposition \ref{prop:basic separation}. 
    Taking into account that 
    $\sup_{k \in \cl\,S_{K}} x^*(k) \leq \sup_{k \in \cl\,S_{K}^0} x^*(k)$ and
    $\inf_{c \in \cl\, S_{C}^0} x^*(c) \leq \inf_{c \in \cl\, S_{C}} x^*(c)$, the implication $3^\circ \Longrightarrow 2^\circ$ is clear. Let us prove the reverse implication. Assume that $2^\circ$ is valid, i.e., there is $x^* \in X^* \setminus \{0_{X^*}\}$ such that $\sup_{k \in \cl\,S_{K}} x^*(k) < \inf_{c \in \cl\, S_{C}} x^*(c)$. Consider cases:
    
    \textit{Case 1:} Assume that $0 \in (\sup_{k \in \cl\,S_{K}} x^*(k), \inf_{c \in \cl\, S_{C}} x^*(c))$. Then, we have $\sup_{k \in \cl\,S_{K}^0} x^*(k) = 0 < \inf_{c \in \cl\, S_{C}} x^*(c)$ and $\sup_{k \in \cl\,S_{K}} x^*(k) < 0 = \inf_{c \in \cl\, S_{C}^0} x^*(c)$.
  
    \textit{Case 2:} Assume that $\sup_{k \in \cl\,S_{K}} x^*(k) \geq 0 = x^*(0_X)$ and pick some $\gamma \in (\sup_{k \in \cl\,S_{K}} x^*(k), \inf_{c \in \cl\, S_{C}} x^*(c))$. Then, $x^*(k) \leq \gamma$ for all $k \in B_{K} \cup \{0_X\}$, hence 
    $$\sup_{k \in \cl\,S_{K}^0} x^*(k) \leq \gamma < \inf_{c \in \cl\, S_{C}} x^*(c).$$

    \textit{Case 3:}  Assume that $\inf_{c \in \cl\, S_{C}} x^*(c) \leq 0 = x^*(0_X)$ and pick some $\gamma \in (\sup_{k \in \cl\,S_{K}} x^*(k), \inf_{c \in \cl\, S_{C}} x^*(c))$. Then, $x^*(c) \geq \gamma$ for all $c \in B_{C} \cup \{0_X\}$, hence 
    $$\inf_{c \in \cl\, S_{C}^0} x^*(c)) \geq \gamma > \sup_{k \in \cl\,S_{K}} x^*(k).$$
    
    Thus, $3^\circ$ is true. 
    
    \qed
\end{proof}

The next proposition relates (under further assumptions, e.g., the weak compactness of $\cl\,S_K$ or $\cl\,S_{C}$) the separation conditions mentioned in Proposition \ref{prop:basic separation_BP_e} to some other classical separation conditions involving cones.

\begin{proposition}
    \label{prop:basic separation_BP_f}
     Consider the following assertions:
    \begin{itemize}
    \item[$1^\circ$] $0_X \notin \cl(S_{C} - S_K)$.
    \item[$2^\circ$] $(\cl\,S_{K}^0) \cap (\cl\, S_{C})  = \emptyset$ or $(\cl\,S_{K}) \cap (\cl\, S_{C}^0)  = \emptyset$.
    \item[$3^\circ$] $(\cl\,S_{K}) \cap (\cl\, S_{C})  = \emptyset$. 
    \item[$4^\circ$] $(\cl K) \cap ({\rm cl}\, S_{C}) = \emptyset$ or $({\rm cl}\, S_{K}) \cap (\cl C) = \emptyset$.
    \item[$5^\circ$] ($(\cl K) \cap \cl({\rm conv}\,C) = \{0_X\}$ and $0_X \notin \cl\, S_{C}$) or ($\cl({\rm conv}\,K) \cap (\cl C) = \{0_X\}$ and $0_X \notin \cl\, S_{K}$).
    \item[$6^\circ$] $(\cl K) \cap  (\cl C) = \{0_X\}$ and [$0_X \notin  \cl\, S_{K}$ or  $0_X \notin \cl\, S_{C}$].
    \end{itemize}
    Then, $1^\circ \Longrightarrow 2^\circ \Longrightarrow  3^\circ  \Longrightarrow 6^\circ$, and moreover, $2^\circ \Longrightarrow  4^\circ  \Longleftrightarrow  5^\circ \Longrightarrow 6^\circ$.\\
    If $\cl\,S_K$ or $\cl\,S_{C}$ is weakly compact, then $3^\circ \Longrightarrow 1^\circ$ (i.e., $1^\circ-3^\circ$ are equivalent). \\
    If $C$ is convex or $K$ is convex, then $3^\circ \Longrightarrow 4^\circ$.\\
    If $C$ and $K$ are convex, then $6^\circ \Longrightarrow 5^\circ \Longrightarrow 2^\circ$  (i.e., $2^\circ-6^\circ$ are equivalent).
\end{proposition}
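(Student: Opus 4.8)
The plan is to exploit the systematic disjunctive structure of the statement: the symmetric conditions $1^\circ$ and $3^\circ$ are invariant under the swap $C \leftrightarrow K$, while each of $2^\circ$, $4^\circ$, $5^\circ$, $6^\circ$ is an ``or'' of two clauses that are interchanged by that swap. Accordingly, my strategy is to feed the already proved non-symmetric Proposition \ref{prop:basic separation_BP_a}, together with the bridging Proposition \ref{prop:basic separation_BP_e}, into the argument \emph{twice} --- once for the ordered pair $(C,K)$ and once for $(K,C)$ --- and glue the two outputs with a logical ``or''. Concretely, for $1^\circ \Rightarrow 2^\circ$ I would first rewrite $1^\circ$ via Proposition \ref{prop:basic separation_BP_e} ($1^\circ \Leftrightarrow 4^\circ$ there) as ``$0_X \notin \cl(S_C - S_K^0)$ or $0_X \notin \cl(S_C^0 - S_K)$''. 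Applying Proposition \ref{prop:basic separation_BP_a} ($1^\circ \Rightarrow 3^\circ$) to the first clause yields $(\cl\,S_K^0) \cap (\cl\,S_C) = \emptyset$; the second clause equals $0_X \notin \cl(S_K - S_C^0)$ (it is the negative of that difference set), and the swapped Proposition \ref{prop:basic separation_BP_a} gives $(\cl\,S_C^0) \cap (\cl\,S_K) = \emptyset$. Together these are exactly $2^\circ$. The implications $2^\circ \Rightarrow 4^\circ$ and $4^\circ \Leftrightarrow 5^\circ$ follow the same pattern, applying the portions $3^\circ \Rightarrow 4^\circ$, respectively $4^\circ \Leftrightarrow 5^\circ$, of Proposition \ref{prop:basic separation_BP_a} to each clause in turn.

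The remaining unconditional implications are elementary. For $2^\circ \Rightarrow 3^\circ$ I would only use the monotonicity $\cl\,S_K \subseteq \cl\,S_K^0$ and $\cl\,S_C \subseteq \cl\,S_C^0$, so that either clause of $2^\circ$ already forces $(\cl\,S_K) \cap (\cl\,S_C) = \emptyset$. For $3^\circ \Rightarrow 6^\circ$, observe that $0_X \in (\cl\,S_C) \cap (\cl\,S_K)$ would contradict $3^\circ$, which gives the bracketed alternative; and any nonzero $x \in (\cl\,K) \cap (\cl\,C)$ would, after normalization, produce $x/\|x\| \in (\cl\,S_K) \cap (\cl\,S_C)$ (using $B_{\cl\,C} \subseteq \cl\,B_C \subseteq \cl\,S_C$ as established in Lemma \ref{lem:no_cl_vs_no_cl}), again contradicting $3^\circ$; hence $(\cl\,K) \cap (\cl\,C) = \{0_X\}$. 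Finally $5^\circ \Rightarrow 6^\circ$ is immediate from $\cl\,C \subseteq \cl(\conv\,C)$ and $\cl\,K \subseteq \cl(\conv\,K)$.

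For the conditional claims I proceed as follows. Under weak compactness of $\cl\,S_K$ or $\cl\,S_C$, the implication $3^\circ \Rightarrow 1^\circ$ is obtained by applying the general separation Proposition \ref{prop:basic separation} (the part $3^\circ \Rightarrow 2^\circ$ therein) to the disjoint, closed, convex, nonempty sets $\cl\,S_K$ and $\cl\,S_C$ to produce a separating functional, and then reading off $1^\circ$ from Proposition \ref{prop:basic separation_BP_e} (the part $2^\circ \Rightarrow 1^\circ$). When $C$ and $K$ are both convex, $6^\circ \Rightarrow 5^\circ$ is a direct rewriting: since $\cl(\conv\,C) = \cl\,C$ and $\cl(\conv\,K) = \cl\,K$, the alternative ``$0_X \notin \cl\,S_K$ or $0_X \notin \cl\,S_C$'' combines with $(\cl\,K)\cap(\cl\,C) = \{0_X\}$ into the two clauses of $5^\circ$. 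And $5^\circ \Rightarrow 2^\circ$ is again the ``twice'' trick, applying the convex implication $5^\circ \Rightarrow 3^\circ$ of Proposition \ref{prop:basic separation_BP_a} to each clause (the first clause uses convexity of $K$, the swapped second clause uses convexity of $C$).

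The main obstacle is $3^\circ \Rightarrow 4^\circ$ under the weaker hypothesis that only one of $C,K$ is convex, say $C$: here the symmetric disjointness $(\cl\,S_K) \cap (\cl\,S_C) = \emptyset$ must be upgraded to a disjointness involving a full cone, and the two clauses of $4^\circ$ are \emph{not} symmetric in the available hypothesis, so I cannot simply apply Proposition \ref{prop:basic separation_BP_a} twice. My plan is to split on whether $0_X \in \cl\,S_C$, using the identity $\cl\,S_C^0 = (\cl\,C) \cap \mathbb{B}_X$ valid for convex $C$ (from Lemma \ref{lem:no_cl_vs_no_cl} together with \cite[Th.~3.1(2)]{gunther2023nonlinear_reflexive}, exactly as invoked in the proof of Lemma \ref{lem:clconvCandclSC}). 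If $0_X \notin \cl\,S_C$, then any $t \in (\cl\,K) \cap (\cl\,S_C)$ satisfies $t \in \cl\,S_C \subseteq \cl\,S_C^0 \subseteq \cl\,C$, so $t \in (\cl\,K)\cap(\cl\,C) = \{0_X\}$ by $3^\circ \Rightarrow 6^\circ$, contradicting $0_X \notin \cl\,S_C$; hence the first clause of $4^\circ$ holds. If instead $0_X \in \cl\,S_C$, then $\cl\,S_C = \cl\,S_C^0 = (\cl\,C)\cap \mathbb{B}_X$ and $6^\circ$ forces $0_X \notin \cl\,S_K$; now any $t \in (\cl\,S_K)\cap(\cl\,C)$ lies in $\mathbb{B}_X \cap \cl\,C = \cl\,S_C$, whence $t \in (\cl\,S_K)\cap(\cl\,S_C) = \emptyset$, so the second clause of $4^\circ$ holds. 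The symmetric case ``$K$ convex'' is handled by swapping the roles of $C$ and $K$, which closes the equivalences $2^\circ$--$6^\circ$ under full convexity.
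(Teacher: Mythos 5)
Your proposal is correct and takes essentially the same approach as the paper's proof: both arguments glue Propositions~\ref{prop:basic separation_BP_a} and~\ref{prop:basic separation_BP_e} applied to the two ordered pairs $(C,K)$ and $(K,C)$, obtain the weak-compactness implication $3^\circ \Rightarrow 1^\circ$ from Proposition~\ref{prop:basic separation}, and prove $3^\circ \Rightarrow 6^\circ$ by the same normalization argument. The only deviation is cosmetic: for $3^\circ \Rightarrow 4^\circ$ with one convex cone you split on whether $0_X \in \cl\,S_C$ and argue directly via the identity $\cl\,S_C^0 = (\cl\,C)\cap \mathbb{B}_X$, whereas the paper splits into three cases ($0_X \in \cl\,S_K$, $0_X \in \cl\,S_C$, or neither) and reuses Proposition~\ref{prop:basic separation_BP_a} for the first two; both case analyses are sound.
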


\begin{proof} 
$1^\circ \Longrightarrow 2^\circ$ follows from Propositions \ref{prop:basic separation_BP_a} and \ref{prop:basic separation_BP_e}, while the implication $2^\circ \Longrightarrow 3^\circ$ and the implication $5^\circ \Longrightarrow  6^\circ$ are obvious. Moreover, $2^\circ  \Longrightarrow 4^\circ \Longleftrightarrow  5^\circ$ is an immediate consequence of Proposition \ref{prop:basic separation_BP_a} ($3^\circ \Longrightarrow 4^\circ \Longleftrightarrow 5^\circ$).

$3^\circ \Longrightarrow 6^\circ$: Assume that $3^\circ$ is valid. It is clear that $0_X \notin  \cl\, S_{K}$ or $0_X \notin \cl\, S_{C}$. On the other hand, by contradiction, suppose that $x\in (\cl K) \cap  (\cl C)$, $x\not = 0_X$. Then, there exists $(x_n)_n\subseteq K$ such that $\lim_{n \to \infty}\frac{x_n}{\|x_n\|}=\frac{x}{\|x\|}\in \cl(K\cap \mathbb{S}_X) = \cl\, B_K \subseteq \cl\,S_K$. Analogously, we can show that $\frac{x}{\|x\|}\in \cl(C\cap \mathbb{S}_X) = \cl\, B_C  \subseteq \cl\,S_C$, which is impossible by $3^\circ$.
We conclude the validity of $6^\circ$.

By Proposition \ref{prop:basic separation}, if $\cl\,S_K$ or $\cl\,S_{C}$ is weakly compact, then $3^\circ \Longrightarrow 1^\circ$. 

W.l.o.g. assume that $C$ is convex.

$3^\circ \Longrightarrow 4^\circ$: Suppose that $3^\circ$ is valid. If $0_X \in \cl\,S_{K}$, then 
$\cl\,S_{K}^0 = \cl\,S_{K}$ by \cite[Th. 3.1 (2)]{gunther2023nonlinear_reflexive}, hence $(\cl K) \cap ({\rm cl}\, S_{C})=\emptyset$ by Proposition \ref{prop:basic separation_BP_a} ($3^\circ \Longrightarrow 4^\circ$). In the previous result one can also change the roles of $C$ and $K$. 
Now, assume that $0_X \notin (\cl\,S_{K}) \cup (\cl\,S_{C})$.
As $3^\circ \Longrightarrow 6^\circ$ is valid, we have $(\cl K) \cap  (\cl C) = \{0_X\}$. Now, by convexity of $C$ we have $\cl S_C\subseteq \cl C$, hence $(\cl S_C) \cap ({\rm cl}\, K) = \emptyset$ (taking into account that $0_X\not \in \cl S_C$). This shows that $4^\circ$ is valid.

Finally, if $C$ and $K$ are convex, then $5^\circ \Longrightarrow 2^\circ$ follows from Proposition \ref{prop:basic separation_BP_a} ($5^\circ \Longrightarrow 3^\circ$), while the implication $6^\circ \Longrightarrow 5^\circ$ is obvious.\qed
\end{proof}

Also for the symmetric case, we relate the separation condition given in Proposition \ref{prop:basic separation_BP_e} ($1^\circ$) involving the non-trivial cones $C$ and $K$ to corresponding separation conditions involving the boundaries / closures of $C$ and $K$.

\begin{proposition}\label{prop:basic separation_BP_bd_1}
    The following assertions are equivalent:
    \begin{itemize}
    \item[$1^\circ$] $0_X \notin \cl(S_C - S_K)$.
    \item[$2^\circ$] $0_X \notin \cl(S_{\cl\,C} - S_{\cl\,K})$.    
    \item[$3^\circ$] $0_X \notin \cl(S_{\bd\,C} - S_{\bd\,K})$ and  $K \cap C = \{0_X\}$.
    \item[$4^\circ$] $0_X \notin \cl(S_{\bd\,C} - S_{\cl\,K})$ and $K \cap C = \{0_X\}$.
    \item[$5^\circ$] $0_X \notin \cl(S_{\cl\,C} - S_{\bd\,K})$ and $K \cap C = \{0_X\}$.
    \end{itemize}
\end{proposition}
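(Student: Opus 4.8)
The plan is to avoid redoing the delicate Bishop--Phelps machinery behind Lemma \ref{lemma_relative_position}, and instead deduce everything from the non-symmetric counterpart, Proposition \ref{prop:basic separation_BP_bd}, applied \emph{twice} --- once to $(C,K)$ and once to the swapped pair $(K,C)$ --- with the two instances glued together by Proposition \ref{prop:basic separation_BP_e}, used at the level of $C,K$ and again at the level of $\bd C,\bd K$. As a warm-up I would settle $1^\circ \iff 2^\circ$ exactly as in Proposition \ref{prop:basic separation_BP_bd}: the implication $2^\circ \Rightarrow 1^\circ$ is immediate from $S_C \subseteq S_{\cl\,C}$ and $S_K \subseteq S_{\cl\,K}$, while $1^\circ \Rightarrow 2^\circ$ follows by combining Remark \ref{rmk: equivalent_condition} (to pass to $\cl\,S_C,\cl\,S_K$) with Lemma \ref{lem:no_cl_vs_no_cl}, which yields $S_{\cl\,C} \subseteq \cl\,S_C = \cl\,S_{\cl\,C}$ and likewise for $K$.

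The core is $1^\circ \iff 3^\circ$. First I would apply Proposition \ref{prop:basic separation_BP_e} ($1^\circ \iff 4^\circ$) to rewrite $1^\circ$ as the disjunction $0_X \notin \cl(S_C - S_K^0)$ or $0_X \notin \cl(S_C^0 - S_K)$. Using the elementary reflection identity $0_X \in \cl(A) \iff 0_X \in \cl(-A)$, the second alternative becomes $0_X \notin \cl(S_K - S_C^0)$. Now the first alternative is precisely assertion $1^\circ$ of Proposition \ref{prop:basic separation_BP_bd} for $(C,K)$, and the rewritten second is its assertion $1^\circ$ applied to $(K,C)$. Invoking Proposition \ref{prop:basic separation_BP_bd} ($1^\circ \iff 3^\circ$) on each, and observing that both resulting conditions carry the \emph{same} side condition $K \cap C = \{0_X\}$, I obtain
$$1^\circ \iff \Big[K \cap C = \{0_X\}\Big] \wedge \Big[0_X \notin \cl(S_{\bd\,C} - S_{\bd\,K}^0) \ \text{ or }\ 0_X \notin \cl(S_{\bd\,K} - S_{\bd\,C}^0)\Big].$$
To finish, I would apply Proposition \ref{prop:basic separation_BP_e} once more, this time with $\bd\,C$ in the role of $C$ and $\bd\,K$ in the role of $K$ (again using the reflection identity on the second disjunct), which collapses the bracketed disjunction to the single condition $0_X \notin \cl(S_{\bd\,C} - S_{\bd\,K})$. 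This is exactly $3^\circ$.

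For the equivalences $3^\circ \iff 4^\circ \iff 5^\circ$ I would argue as in Proposition \ref{prop:basic separation_BP_bd}: assuming $K \cap C = \{0_X\}$ throughout, the inclusions $S_{\bd\,C} \subseteq S_{\cl\,C}$ and $S_{\bd\,K} \subseteq S_{\cl\,K}$ give the chain
$$0_X \notin \cl(S_{\cl\,C} - S_{\cl\,K}) \Rightarrow 0_X \notin \cl(S_{\bd\,C} - S_{\cl\,K}) \Rightarrow 0_X \notin \cl(S_{\bd\,C} - S_{\bd\,K}) \Rightarrow 0_X \notin \cl(S_{\cl\,C} - S_{\cl\,K}),$$
and symmetrically with $S_{\cl\,C} - S_{\bd\,K}$, where the final implication in each chain is the already-established $3^\circ \Rightarrow 2^\circ$.

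The main obstacle I anticipate is purely the bookkeeping around the second, boundary-level use of Proposition \ref{prop:basic separation_BP_e}: that result is stated for non-trivial cones, so I must first verify that $\bd\,C$ and $\bd\,K$ are cones (for a non-trivial cone $C$ one has $0_X \in \bd\,C$ and $\mathbb{R}_+ \cdot \bd\,C = \bd\,C$, since scaling is a homeomorphism), and then handle the degenerate cases in which a boundary cone is trivial. For instance, if $\bd\,C = \{0_X\}$ then $B_{\bd\,C} = \emptyset$ and $S_{\bd\,C} = \emptyset$, so every difference involving $S_{\bd\,C}$ has empty closure and the corresponding conditions hold vacuously; one checks directly that the displayed equivalences remain valid in these cases, leaving the conclusion unaffected. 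All the genuinely hard geometric content (the separation via Bishop--Phelps cones) stays encapsulated inside Proposition \ref{prop:basic separation_BP_bd}, so it is reused rather than reproved.
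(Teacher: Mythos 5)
Your proposal is correct and takes essentially the same route as the paper's own proof: both reduce the symmetric statement to the non-symmetric Proposition \ref{prop:basic separation_BP_bd}, applied to $(C,K)$ and to the swapped pair $(K,C)$, with the two instances glued by the disjunction equivalence of Proposition \ref{prop:basic separation_BP_e} used once at the cone level and once more at the boundary level (the paper merely organizes the central chain in terms of $\cl C$, $\cl K$ after establishing $1^\circ \Longleftrightarrow 2^\circ$, which is immaterial). Your explicit verification that $\bd\,C$, $\bd\,K$ are cones and your treatment of the degenerate case $S_{\bd\,C} = \emptyset$ address a point the paper leaves implicit, but do not change the substance of the argument.
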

\color{black}

\begin{proof} 
Let us first prove $1^\circ \Longleftrightarrow 2^\circ$. The implication $2^\circ \Longrightarrow 1^\circ$ is obvious.
By Remark \ref{rmk: equivalent_condition}, $0_X \notin \cl(S_C - S_K) \Longleftrightarrow 0_X \notin \cl(\cl\,S_C - \cl\,S_K)$, and from Lemma~ \ref{lem:no_cl_vs_no_cl} we have  $ S_{\cl\,C} \subseteq \cl\,S_C $ and $S_{\cl\,K} \subseteq \cl\,S_K $, hence  $0_X \notin \cl(\cl\,S_C - \cl\,S_K) \Longrightarrow  0_X \notin \cl(S_{\cl\,C} - S_{\cl\,K})$. Thus, $1^\circ \Longrightarrow 2^\circ$ is valid.

We also get the equivalence of $2^\circ-5^\circ$, since
 Propositions \ref{prop:basic separation_BP_bd} and \ref{prop:basic separation_BP_e} ensure on the one hand that
    \begin{align*}
       0_X \notin \cl(S_{\cl\,C} - S_{\cl\,K}) 
      & \Longleftrightarrow  \left[ 0_X \notin \cl(S_{\cl\,C} - S_{\cl\,K}^0) \text{ or } 0_X \notin \cl(S_{\cl\,C}^0 - S_{\cl\,K}) \right]\\
      & \Longrightarrow K \cap C = \{0_X\},
     \end{align*}
 and on the other hand, under the condition $K \cap C = \{0_X\}$,
    \begin{align*}
    & \left[0_X \notin \cl(S_{\cl\,C} - S_{\cl\,K}^0) \text{ or } 0_X \notin \cl(S_{\cl\,C}^0 - S_{\cl\,K})\right]\\
       \Longleftrightarrow \; & \left[0_X \notin \cl(S_{\bd\,C} - S_{\bd\,K}^0) \text{ or } 0_X \notin \cl(S_{\bd\,C}^0 - S_{\bd\,K}) \right] \Longleftrightarrow 0_X \notin \cl(S_{\bd\,C} - S_{\bd\,K})\\
     \Longleftrightarrow  \;  &  \left[0_X \notin \cl(S_{\bd\,C} - S_{\cl\,K}^0) \text{ or } 0_X \notin \cl(S_{\bd\,C}^0 - S_{\cl\,K}) \right] \Longleftrightarrow 0_X \notin \cl(S_{\bd\,C} - S_{\cl\,K}) \\
      \Longleftrightarrow  \;  &  \left[0_X \notin \cl(S_{\cl\,C} - S_{\bd\,K}^0) \text{ or } 0_X \notin \cl(S_{\cl\,C}^0 - S_{\bd\,K}) \right]  \Longleftrightarrow  0_X \notin \cl(S_{\cl\,C} - S_{\bd\,K}).  
    \end{align*}
    \qed
\end{proof}

\begin{remark}
The condition ``$K \cap C = \{0_X\}$'' in Proposition~\ref{prop:basic separation_BP_bd_1} can be replaced either by the condition ``$(\cl K) \cap (\cl C) = \{0_X\}$'' or, alternatively, by the condition 
``$(\cl K) \cap (\cl({\rm conv}\,C)) = \{0_X\}$ or $(\cl({\rm conv}\,K)) \cap (\cl C) = \{0_X\}$''. The proofs are analogous.
\end{remark}

%=========================================================
\section{On Symmetric and Non-Symmetric Cone Separation Based on Bishop-Phelps Separating Cones}\label{sec:sym_and_non-sym_cone_separation}
%=========================================================

The aim of this section is to present new (non-)symmetrical strict cone separation theorems in the real normed space $(X, ||\cdot||)$.  Consider the cones $C$ and $K$ from Section \ref{sec:sym_and_non-sym_separation}.

%---------------------------------------------------------
\subsection{Non-Symmetric Strict Cone Separation}
%---------------------------------------------------------
Let us first discuss the non-symmetrical case. We like to find characterizations for
$\mathcal{N}(C, K \mid \mathcal{C}_{{\rm BP}^*}) \neq \emptyset
$ 
for some family of cones  
$\mathcal{C}_{{\rm BP}^*} \subseteq \mathcal{C}_{{\rm BP}}$, which then also ensure that $C$ and $K$ can be strictly separated by a Bishop-Phelps cone $C(x^*, \alpha)$ (with $||x^*||_* > \alpha > 0$) in a non-symmetric way, i.e., 
$\mathcal{N}(C, K \mid \mathcal{C}_{{\rm BP}}) \neq \emptyset$.
More precisely, for any non-trivial cone $D \subseteq X$, we consider a family $\mathcal{C}_{{\rm BP}^*}$ given by one of the following families
\begin{align*}
    \mathcal{C}_{{\rm BP}^*_1}(D) & := \{C(x^*, \alpha) \mid (x^*, \alpha) \in \cor\,D^{a+}\},\\
    \mathcal{C}_{{\rm BP}^*_2}(D) & := \{C(x^*, \alpha) \mid (x^*, \alpha) \in D^{aw\#} \cap (X^* \times \mathbb{P})\}.
\end{align*}

In view of Proposition \ref{prop:properties_augmented_dual_cone} ($2^\circ$) we have $\mathcal{C}_{{\rm BP}^*_1}(D) \subseteq \mathcal{C}_{{\rm BP}^*_2}(D) \subseteq \mathcal{C}_{{\rm BP}}$, and if $\cl_w B_D$ is weakly compact, then  $\mathcal{C}_{{\rm BP}^*_1}(D) = \mathcal{C}_{{\rm BP}^*_2}(D)$.
In addition, we are interested in a family of cones given by
\begin{align*}
    \mathcal{C}_{{\rm BP}^*_3}(D) & := \{C(x^*, \alpha) \mid (x^*, \alpha) \in D^{aw\#}\}.
\end{align*}
Note that $\mathcal{C}_{{\rm BP}^*_2}(D) \subseteq \mathcal{C}_{{\rm BP}^*_3}(D) \subseteq \mathcal{C}_{{\rm Lin}} \cup \mathcal{C}_{{\rm BP}}$ and $\mathcal{C}_{{\rm BP}^*_3}(D) \setminus \mathcal{C}_{{\rm BP}^*_2}(D) \subseteq \mathcal{C}_{{\rm Lin}}$.

Let us first present our general result (without involving a weak compactness assumption), which in particular gives characterizations for $\mathcal{N}(C, K \mid \mathcal{C}_{{\rm BP}^*_1}(C)) \neq \emptyset$.

\begin{theorem} \label{th:non-sym_sep_theorem_1}
The following conditions are equivalent:
	\begin{itemize}
        \item[$1^\circ$] $0_X \notin \cl(S_{C} - S_K^0)$.
        \item[$2^\circ$] There exists $x^* \in X^* \setminus \{0_{X^*}\}$ such that, for any 
        $$\alpha \in (\sup_{k \in \cl\,S_{K}^0} x^*(k), \inf_{c \in \cl\, S_{C}} x^*(c)) \quad(\subseteq (0,+\infty)),$$
        we have $(x^*, \alpha) \in \cor\,C^{a+}$ and 
        \eqref{separation_condition_inequalities}
        is valid, where $\sup_{k \in \cl\,S_{K}^0} x^*(k) < \inf_{c \in \cl\, S_{C}} x^*(c)$.
        \item[$3^\circ$] There exist $\delta_2 > \delta_1 > 0$ and $x^* \in X^* \setminus \{0_{X^*}\}$ such that, for any $\alpha \in (\delta_1, \delta_2)$, we have $(x^*, \alpha) \in \cor\,C^{a+}$ and \eqref{separation_condition_inequalities} is valid. 
        \item[$4^\circ$] There exists $(x^*, \alpha) \in \cor\,C ^{a+}$ such that \eqref{separation_condition_inequalities} is valid (i.e., $\mathcal{N}(C, K \mid \mathcal{C}_{{\rm BP}^*_1}(C)) \neq \emptyset$).
	\end{itemize}	
    Moreover, the above assertions are also equivalent to the corresponding assertions $1^\circ$--$4^\circ$ where the cone $C$ is replaced by $\cl({\rm conv}\, C)$ (respectively, $\cl\, C$) while $K$ is replaced by $\cl\,K$. 
\end{theorem}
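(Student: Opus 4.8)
The plan is to prove the fourfold equivalence by running the cycle $1^\circ \Rightarrow 2^\circ \Rightarrow 3^\circ \Rightarrow 4^\circ \Rightarrow 1^\circ$, and then to obtain the ``moreover'' part by reducing it, for each substitution, to this equivalence applied to the new pair of cones.

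For $1^\circ \Rightarrow 2^\circ$ I would first apply Proposition \ref{prop:basic separation} (the equivalence $1^\circ \Leftrightarrow 2^\circ$ there, with $\Omega^1 = S_K^0$ and $\Omega^2 = S_C$) to obtain some $x^* \in X^* \setminus \{0_{X^*}\}$ with $\sup_{k \in \cl\,S_K^0} x^*(k) < \inf_{c \in \cl\, S_C} x^*(c)$. Since $0_X \in \cl\,S_K^0$, the left-hand supremum is $\geq x^*(0_X) = 0$, so the open interval between the two quantities lies in $(0, +\infty)$, as asserted. Fixing $\alpha$ in this interval, I would verify the two conclusions. Membership $(x^*, \alpha) \in \cor\,C^{a+}$ follows from the description in Proposition \ref{prop:properties_augmented_dual_cone} ($2^\circ$): one has $\inf_{x \in B_C} x^*(x) = \inf_{c \in \cl\, S_C} x^*(c) > \alpha > 0$ (using $B_C \subseteq S_C$ and that the infimum of a linear functional over a convex hull equals its infimum over the generating set), which in particular forces $x^* \in C^+$. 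The inequalities \eqref{separation_condition_inequalities} I would get by normalising: for $c \in -C \setminus \{0_X\}$ one has $-c/\|c\| \in B_C$, so $x^*(-c/\|c\|) > \alpha$ yields $x^*(c) + \alpha\|c\| < 0$; symmetrically, for $k \in -K \setminus \{0_X\}$ one has $-k/\|k\| \in B_K \subseteq \cl\,S_K^0$, so $x^*(-k/\|k\|) < \alpha$ yields $x^*(k) + \alpha\|k\| > 0$.

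The implications $2^\circ \Rightarrow 3^\circ \Rightarrow 4^\circ$ are essentially bookkeeping: for $2^\circ \Rightarrow 3^\circ$ I would take $\delta_2 := \inf_{c \in \cl\, S_C} x^*(c)$ and $\delta_1 := \sup_{k \in \cl\,S_K^0} x^*(k)$, replacing $\delta_1$ by any strictly positive number below $\delta_2$ should that supremum vanish (the validity on the smaller interval is inherited); and $3^\circ \Rightarrow 4^\circ$ is immediate by choosing a single $\alpha \in (\delta_1, \delta_2)$. For the closing implication $4^\circ \Rightarrow 1^\circ$ I would run the normalisation backwards: from $(x^*, \alpha) \in \cor\,C^{a+}$ I get $\inf_{c \in \cl\, S_C} x^*(c) = \inf_{x \in B_C} x^*(x) > \alpha$, while the left half of \eqref{separation_condition_inequalities} evaluated on $B_K$ gives $x^*(b) < \alpha$ for all $b \in B_K$, whence $\sup_{k \in \cl\,S_K^0} x^*(k) = \max\{0, \sup_{b \in B_K} x^*(b)\} \leq \alpha$ (using $\alpha > 0$ to absorb the origin of $S_K^0$). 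Thus $\sup_{k \in \cl\,S_K^0} x^*(k) \leq \alpha < \inf_{c \in \cl\, S_C} x^*(c)$, and Proposition \ref{prop:basic separation} ($2^\circ \Rightarrow 1^\circ$) delivers $0_X \notin \cl(S_C - S_K^0)$.

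For the ``moreover'' part I would handle the two substitutions separately, in each case reducing to the fourfold equivalence already established for the new pair (which stays a pair of non-trivial cones; the degenerate case where $\cl(\conv C)$ or $\cl\,K$ equals $X$ makes $\cor\,C^{a+} = \emptyset$ and hence all four conditions fail simultaneously, and is disposed of directly). For $(\cl\,C, \cl\,K)$ it suffices to match condition $1^\circ$: by Lemma \ref{lem:no_cl_vs_no_cl} one has $\cl\,S_{\cl\,C} = \cl\,S_C$ and $\cl\,S_{\cl\,K}^0 = \cl\,S_K^0$, so Remark \ref{rmk: equivalent_condition} yields $0_X \notin \cl(S_C - S_K^0) \Leftrightarrow 0_X \notin \cl(S_{\cl\,C} - S_{\cl\,K}^0)$. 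The pair $(\cl(\conv C), \cl\,K)$ is more delicate, since $\cl\,S_{\cl(\conv C)} \neq \cl\,S_C$ in general (e.g.\ when $C$ is a union of two rays, $\cl\,S_{\conv C}$ is two-dimensional while $\cl\,S_C$ is a segment), so a direct matching of $1^\circ$ is unavailable; instead I would match condition $4^\circ$, using \eqref{eq:clconvCa+=Ca+} to identify $\cor\,C^{a+} = \cor(\cl(\conv C))^{a+}$ and invoking Proposition \ref{lem:new_lemma_strict_sep_conditions} (equivalence $1^\circ \Leftrightarrow 2^\circ$) to pass between \eqref{separation_condition_inequalities} for $(C,K)$ and for $(\cl(\conv C), \cl\,K)$. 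I expect this last point---recognising that the closed-convex-hull substitution must be routed through $4^\circ$ rather than $1^\circ$---to be the main subtlety of the argument; the rest reduces to careful matching of suprema and infima over the relevant bases, which is routine given Propositions \ref{prop:basic separation} and \ref{prop:properties_augmented_dual_cone}.
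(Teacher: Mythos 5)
Your proposal is correct and takes essentially the same route as the paper: the same cycle $1^\circ \Rightarrow 2^\circ \Rightarrow 3^\circ \Rightarrow 4^\circ \Rightarrow 1^\circ$ built on Propositions \ref{prop:basic separation} and \ref{prop:properties_augmented_dual_cone}, and the same two-track treatment of the ``moreover'' part---the pair $(\cl\,C, \cl\,K)$ matched through condition $1^\circ$ (which is exactly Proposition \ref{prop:basic separation_BP_bd} ($1^\circ \Leftrightarrow 2^\circ$), itself proved from Lemma \ref{lem:no_cl_vs_no_cl} and Remark \ref{rmk: equivalent_condition} as you do), and the pair $(\cl(\conv\,C), \cl\,K)$ matched through condition $4^\circ$ via Proposition \ref{lem:new_lemma_strict_sep_conditions} together with \eqref{eq:clconvCa+=Ca+}, which is indeed the key subtlety. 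One small repair to your degenerate-case remark: if $\cl\,K = X$ while $\cl(\conv\,C) \neq X$, then $\cor\,C^{a+}$ need not be empty; all four conditions still fail, but for a different reason (validity of \eqref{separation_condition_inequalities} against a dense $K$ would force $||x^*||_* \leq \alpha$, contradicting $\alpha < \inf_{x \in B_C} x^*(x) \leq ||x^*||_*$, and $1^\circ$ fails since $S_C \subseteq \cl\,S_K^0$)---the paper sidesteps this by observing that every one of the twelve assertions implies non-triviality of all the cones involved.
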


\begin{proof}
By Proposition \ref{prop:basic separation} assertion $1^\circ$ is valid if and only if there is $x^* \in X^* \setminus \{0_{X^*}\}$ such that $\sup_{k \in \cl\,S_{K}^0} x^*(k) < \inf_{c \in \cl\, S_{C}} x^*(c)$.
Since $0_X \in \cl\,S_{K}^0$ we have $0 < \alpha < \inf_{c \in \cl\, S_{C}} x^*(c)$ for any $\alpha \in (\sup_{k \in \cl\,S_{K}^0} x^*(k), \inf_{c \in \cl\, S_{C}} x^*(c))$, which shows that $(x^*, \alpha) \in \cor\, C^{a+}$ by Proposition \ref{prop:properties_augmented_dual_cone} ($2^\circ$). Moreover, it is easy to check that $\sup_{k \in \cl\,S_{K}^0} x^*(k) < \alpha < \inf_{c \in \cl\, S_{C}} x^*(c)$
implies \eqref{separation_condition_inequalities}. 
This shows that $1^\circ \Longleftrightarrow 2^\circ$. The implication $2^\circ \Longrightarrow 3^\circ$ is obvious.
The equivalence $3^\circ \Longleftrightarrow 4^\circ$ is provided by Proposition \ref{lem:new_lemma_strict_sep_conditions}. 
Let us show that the implication $4^\circ \Longrightarrow 1^\circ$ is obvious. Assume that $4^\circ$ is valid, that is, there exists $(x^*, \alpha) \in \cor\,C^{a+}$ such that \eqref{separation_condition_inequalities} is valid. 
Consequently, in view of Proposition \ref{prop:properties_augmented_dual_cone} ($2^\circ$), we have 
$0 < \alpha < \inf_{c \in B_C} x^*(c) = \inf_{c \in \cl\, S_{C}} x^*(c)$. The left inequality in \eqref{separation_condition_inequalities} yields $x^*(k) \leq \alpha$ for all $k \in B_K \cup \{0_X\}$, hence $x^*(k) \leq \alpha$ for all $k \in \cl\, S_{K}^0$, and so $\sup_{k \in \cl\,S_K^0} x^*(k) \leq \alpha$.
We conclude that $\sup_{k \in \cl\,S_{K}^0} x^*(k) \leq \alpha < \inf_{c \in \cl\, S_{C}} x^*(c)$, which means that $1^\circ$ is valid (by Proposition \ref{prop:basic separation}). 

Let us prove the remaining part of the theorem. 
Proposition \ref{lem:new_lemma_strict_sep_conditions} ($1^\circ \Longleftrightarrow 2^\circ$) yields the equivalence of assertion $4^\circ$ and assertion $4^\circ$ with $\cl({\rm conv}\, C)$ in the role of $C$ and $\cl\,K$ in the role of $K$. Moreover, Proposition \ref{prop:basic separation_BP_bd} ($1^\circ \Longleftrightarrow 2^\circ$) yields the equivalence of assertion $1^\circ$ and assertion $1^\circ$ with $\cl\,C$ in the role of $C$ and $\cl\,K$ in the role of $K$. It is easy to check that the assertions $1^\circ-4^\circ$, the assertions $1^\circ-4^\circ$ with $\cl({\rm conv}\, C)$ in the role of $C$ and $\cl\,K$ in the role of $K$, and the assertions $1^\circ-4^\circ$ with $\cl\, C$ in the role of $C$ and $\cl\,K$ in the role of $K$ all imply the conditions $\{0_X\} \subsetneq \cl\, C \subseteq \cl({\rm conv}\, C) \neq X$ and $\{0_X\} \subsetneq \cl\, K \neq X$. 
Then the proven results applied for the non-trivial cones $C$ and $K$, for the non-trivial cones $\cl({\rm conv}\, C)$ and $\cl\,K$, and for the non-trivial cones $\cl\,C$ and $\cl\,K$ yield the result.

\qed
\end{proof}

A similar result (involving the separation property \eqref{separation_condition_inequalities}) we also get in the case that we change the roles of $C$ and $K$. In particular, we derive a characterization for $\mathcal{N}(K, C \mid \mathcal{C}_{{\rm BP}^*_1}(K)) \neq \emptyset$.

\begin{theorem} \label{th:non-sym_sep_theorem_2}
The following conditions are equivalent:
	\begin{itemize}
        \item[$1^\circ$] $0_X \notin \cl(S_{C}^0 - S_{K})$.
        \item[$2^\circ$] There exists $x^* \in X^* \setminus \{0_{X^*}\}$ such that, for any $$\alpha \in (\sup_{c \in \cl\,S_{C}^0} x^*(c), \inf_{k \in \cl\, S_{K}} x^*(k)) \quad (\subseteq (0,\infty)),$$ we have $(x^*, \alpha) \in \cor\,K^{a+}$ and 
        \begin{align}
        x^*(c) + \alpha ||c|| &  >  0 > x^*(k) + \alpha ||k||   \quad \mbox{for } k \in -K\setminus \{0_X\}, c \in -C \setminus \{0_X\},  \label{separation_condition_inequalities_new}
        \end{align}	
        is valid, or equivalently,
        \begin{align}
        (-x^*)(c) - \alpha ||c|| &  <  0 < (-x^*)(k) - \alpha ||k||   \quad \mbox{for } k \in -K\setminus \{0_X\}, c \in -C \setminus \{0_X\},  \label{separation_condition_inequalities_new2}
        \end{align}	
        is valid, where $\sup_{c \in \cl\,S_{C}^0} x^*(c) < \inf_{k \in \cl\, S_{K}} x^*(k)$.
        \item[$3^\circ$] There exists $x^* \in X^* \setminus \{0_{X^*}\}$ such that, for any 
        $$\alpha \in (\sup_{k \in \cl\, S_{K}} x^*(k), \inf_{c\in \cl\,S_{C}^0} x^*(c)) \quad(\subseteq (-\infty, 0)),$$
        we have $(x^*, \alpha) \in -\cor\,K^{a+}$ and 
        \eqref{separation_condition_inequalities} is valid, where $\sup_{k \in \cl\, S_{K}} x^*(k) < \inf_{c\in \cl\,S_{C}^0} x^*(c)$.
        \item[$4^\circ$] There exist $\delta_1 < \delta_2 < 0$ and $x^* \in X^* \setminus \{0_{X^*}\}$ such that, for any $\alpha \in (\delta_1, \delta_2)$, we have $(x^*, \alpha) \in -\cor\,K^{a+}$ and \eqref{separation_condition_inequalities} is valid. 
        \item[$5^\circ$] There exists $(x^*, \alpha) \in -\cor\,K^{a+}$ such that \eqref{separation_condition_inequalities} is valid (i.e., $\mathcal{N}(K, C \mid \mathcal{C}_{{\rm BP}^*_1}(K)) \neq \emptyset$).
	\end{itemize}	
    Moreover, the above assertions $1^\circ$--$5^\circ$ are also equivalent to the corresponding assertions where the cone $K$ is replaced by $\cl({\rm conv}\, K)$ (respectively, $\cl\, K$) while $C$ is replaced by $\cl\,C$. 
\end{theorem}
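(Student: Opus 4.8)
The plan is to prove Theorem~\ref{th:non-sym_sep_theorem_2} by recognizing it as the mirror image of Theorem~\ref{th:non-sym_sep_theorem_1}, obtained by swapping the roles of $C$ and $K$. The key observation is that the condition in Theorem~\ref{th:non-sym_sep_theorem_1} ($1^\circ$) reads $0_X \notin \cl(S_C - S_K^0)$, while the condition here in ($1^\circ$) reads $0_X \notin \cl(S_C^0 - S_K)$; the latter is exactly the former with $C$ and $K$ interchanged, since $\cl(S_C^0 - S_K) = -\cl(S_K - S_C^0)$ and $0_X \notin \cl(S_K - S_C^0)$ is Theorem~\ref{th:non-sym_sep_theorem_1} ($1^\circ$) applied to the pair $(K,C)$. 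Thus I would first apply Theorem~\ref{th:non-sym_sep_theorem_1} verbatim to the swapped pair $(K,C)$, obtaining the equivalence of $0_X \notin \cl(S_K - S_C^0)$ with the existence of some $(y^*,\beta) \in \cor\,K^{a+}$ satisfying the analogue of \eqref{separation_condition_inequalities} (with $K$ playing the role of the ``inner'' cone).

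\emph{Second,} I would translate this swapped statement back into the form required here via the substitution $x^* := -y^*$, $\alpha := -\beta$. This is the heart of the bookkeeping: under $(x^*,\alpha) = (-y^*,-\beta)$ we have $(x^*,\alpha) \in -\cor\,K^{a+}$ precisely when $(y^*,\beta) \in \cor\,K^{a+}$, which matches the membership conditions in assertions $3^\circ$--$5^\circ$. The sign flip explains why $\alpha$ now ranges in a negative interval $(\sup_{k}x^*(k), \inf_{c}x^*(c)) \subseteq (-\infty,0)$ in $3^\circ$, and why $5^\circ$ characterizes $\mathcal{N}(K,C \mid \mathcal{C}_{{\rm BP}^*_1}(K)) \neq \emptyset$. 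I would verify that \eqref{separation_condition_inequalities} for the original $(x^*,\alpha)$ is equivalent to the separation inequality for the swapped pair written in terms of $(y^*,\beta)$, using Lemma~\ref{lem:properties_BP} ($1^\circ$) to handle the $\pm x^*$, $\pm\alpha$ identities.

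\emph{Third,} the only genuinely new assertion compared to the direct translation is $2^\circ$, which phrases things with $\alpha \in (\sup_{c}x^*(c), \inf_{k}x^*(k)) \subseteq (0,\infty)$ and uses the conditions \eqref{separation_condition_inequalities_new} and \eqref{separation_condition_inequalities_new2} (which are immediate rewordings of each other, differing only by multiplying through by $-1$). To connect $2^\circ$ with the negative-interval description in $3^\circ$, I would again invoke the sign-change $x^* \mapsto -x^*$ (keeping $\alpha > 0$): Proposition~\ref{prop:basic separation} guarantees that $0_X \notin \cl(S_C^0 - S_K)$ is equivalent to the existence of a functional strictly separating $\cl\,S_C^0$ from $\cl\,S_K$, and either sign convention for that functional yields a valid formulation. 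I would use Proposition~\ref{prop:properties_augmented_dual_cone} ($2^\circ$) to certify that the chosen $\alpha$ lands in $\cor\,K^{a+}$ (or its negative), exactly as in the proof of Theorem~\ref{th:non-sym_sep_theorem_1}.

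\emph{The main obstacle} will be keeping the four sign conventions coherent: the interplay between $\pm x^*$, $\pm\alpha$, the direction of the strict inequalities in \eqref{separation_condition_inequalities}, \eqref{separation_condition_inequalities_new}, \eqref{separation_condition_inequalities_new2}, and the membership in $\cor\,K^{a+}$ versus $-\cor\,K^{a+}$. The cleanest route is to fix once and for all the dictionary ``apply Theorem~\ref{th:non-sym_sep_theorem_1} to $(K,C)$, then substitute $x^* \to -x^*$, $\alpha \to -\alpha$,'' and to verify each of the five assertions against that dictionary rather than proving each implication from scratch. For the final ``moreover'' sentence, the passage to $\cl({\rm conv}\,K)$ and $\cl\,C$ follows from the corresponding ``moreover'' clause of Theorem~\ref{th:non-sym_sep_theorem_1} (with roles swapped) together with Proposition~\ref{prop:basic separation_BP_bd} ($1^\circ \Longleftrightarrow 2^\circ$), requiring no further work beyond the same substitution.
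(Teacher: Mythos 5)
Your proposal is correct and follows essentially the same route as the paper: the paper's proof likewise observes that $1^\circ$ is equivalent to $0_X \notin \cl(S_K - S_C^0)$, invokes Theorem~\ref{th:non-sym_sep_theorem_1} with $K$ and $C$ in the roles of $C$ and $K$ to obtain $1^\circ \Longleftrightarrow 2^\circ \Longleftrightarrow 4^\circ \Longleftrightarrow 5^\circ$, and then handles $2^\circ \Longleftrightarrow 3^\circ$ (and the passage between \eqref{separation_condition_inequalities_new}, \eqref{separation_condition_inequalities_new2}, \eqref{separation_condition_inequalities}, and $\cor K^{a+}$ versus $-\cor K^{a+}$) by exactly your substitution $(y^*,\beta) := (-x^*,-\alpha)$, with the ``moreover'' clause also read off from Theorem~\ref{th:non-sym_sep_theorem_1} applied to the swapped pair. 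Your sign-flip dictionary is coherent, so the argument goes through as planned.
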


\begin{proof}
First, observe that $1^\circ \Longleftrightarrow 0_X \notin \cl(S_{K} - S_{C}^0)$. Hence, the equivalences $1^\circ\Longleftrightarrow 2^\circ\Longleftrightarrow 4^\circ \Longleftrightarrow 5^\circ$ follow from Theorem \ref{th:non-sym_sep_theorem_1} (applied for $K$ and $C$ in the role of $C$ and $K$). Note that the following statements are equivalent:
\begin{itemize}
    \item[$\bullet$] There exists $(x^*, \alpha) \in \cor\,K^{a+}$ such that \eqref{separation_condition_inequalities} (with $K$ and $C$ in the role of $C$ and $K$) is valid.
    \item[$\bullet$] There exists $(x^*, \alpha) \in \cor\,K^{a+}$ such that \eqref{separation_condition_inequalities_new} is valid.
    \item[$\bullet$] There exists $(x^*, \alpha) \in \cor\,K^{a+}$ such that \eqref{separation_condition_inequalities_new2} is valid.
    \item[$\bullet$] There exists $(x^*, \alpha) \in -\cor\,K^{a+}$ such that \eqref{separation_condition_inequalities} is valid. 
\end{itemize}

Let us prove the equivalence $2^\circ \iff 3^\circ$. Consider $x^* \in X^* \setminus \{0_{X^*}\}$ and $\alpha \in \mathbb{P}$. Put $y^* := -x^*\; (\in X^* \setminus \{0_{X^*}\})$ and $\beta := -\alpha\; (\in -\mathbb{P})$. Noting that
\begin{itemize}
   \item[$\bullet$]  $(x^*, \alpha)$ satisfies 
 \eqref{separation_condition_inequalities_new2} $\iff$ $(y^*, \beta)$ satisfies 
 \eqref{separation_condition_inequalities} (with ($y^*, \beta$) in the role of ($x^*, \alpha$));
 \item[$\bullet$] $(x^*, \alpha) \in \cor\,K^{a+}$ $\iff$ $(y^*, \beta) \in -\cor\,K^{a+}$;
 \item[$\bullet$] $\alpha \in (\sup_{c \in \cl\,S_{C}^0} x^*(c), \inf_{k \in \cl\, S_{K}} x^*(k))$ \\
 $\iff$ $\beta \in (-\inf_{k \in \cl\, S_{K}} (-y^*)(k), -\sup_{c \in \cl\,S_{C}^0} (-y^*)(c))$ \\
 $\iff$ $\beta \in (\sup_{k \in \cl\, S_{K}} y^*(k), \inf_{c \in \cl\,S_{C}^0} y^*(c))$
\end{itemize}
we conclude that $2^\circ \iff 3^\circ$. 

The remaining part of the theorem with the equivalence of the assertions $1^\circ$--$5^\circ$ with the corresponding assertions where the cone $K$ is replaced by $\cl({\rm conv}\, K)$ (respectively, $\cl\, K$) while $C$ is replaced by $\cl\,C$ follows directly from Theorem \ref{th:non-sym_sep_theorem_1} (applied to $K$ and $C$ in the role of $C$ and $K$).
\qed
\end{proof}

Under the weak compactness of the set $\cl\, S_{C}$ or/and $\cl\, S_K^0$ we can state some further characterizations, in particular, we can characterize the condition $\mathcal{N}(C, K \mid \mathcal{C}_{{\rm BP}^*_i}(C)) \neq \emptyset$ for $i \in \{2,3\}$.

\begin{theorem} \label{th:non-sym_sep_theorem_3}
Consider the following assertions:
	\begin{itemize}
        \item[$1^\circ$] $0_X \notin \cl(S_{C} - S_K^0)$.
		\item[$2^\circ$] $(\cl\,S_K^0) \cap (\cl\, S_{C}) = \emptyset$.
		\item[$3^\circ$] There exists $(x^*, \alpha) \in C^{aw\#} \cap (X^* \times \mathbb{P})$  such that \eqref{separation_condition_inequalities} is valid (i.e., $\mathcal{N}(C, K \mid \mathcal{C}_{{\rm BP}^*_2}(C)) \neq \emptyset$).
        \item[$4^\circ$]  There exists $(x^*, \alpha) \in C^{aw\#}$ such that  \eqref{separation_condition_inequalities} is valid (i.e., $\mathcal{N}(C, K \mid \mathcal{C}_{{\rm BP}^*_3}(C)) \neq \emptyset$).
	\end{itemize}	
    Then, $1^\circ \Longrightarrow 2^\circ$ and $1^\circ \Longrightarrow 3^\circ \Longrightarrow 4^\circ$. 
    If one of the sets $\cl\, S_{C}$ and $\cl\, S_K^0$ is weakly compact, then $2^\circ \Longrightarrow 1^\circ$.
    If $\cl\, S_{C}$ is weakly compact, then $4^\circ \Longrightarrow 1^\circ$.
\end{theorem}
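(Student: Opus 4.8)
The plan is to reduce every implication to results already established, after identifying the concrete pair of sets to which the abstract separation machinery of Proposition~\ref{prop:basic separation} applies. Throughout I would take $\Omega^1 := \cl\,S_K^0$ and $\Omega^2 := \cl\,S_C$, which are non-empty (as $C$ and $K$ are non-trivial, so $B_C, B_K \neq \emptyset$), closed, and convex. By Remark~\ref{rmk: equivalent_condition}, the condition $0_X \notin \cl(S_C - S_K^0)$ in $1^\circ$ is equivalent to $0_X \notin \cl(\Omega^2 - \Omega^1)$, so that our $1^\circ$ is exactly assertion $1^\circ$ of Proposition~\ref{prop:basic separation} for this choice of sets, while our $2^\circ$ is exactly assertion $3^\circ$ there.

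First I would obtain $1^\circ \Longrightarrow 2^\circ$ directly from the implication $1^\circ \Longrightarrow 3^\circ$ of Proposition~\ref{prop:basic separation}, and the converse $2^\circ \Longrightarrow 1^\circ$ (under weak compactness of one of $\cl\,S_C$, $\cl\,S_K^0$) from the implication $3^\circ \Longrightarrow 2^\circ \Longleftrightarrow 1^\circ$ of the same proposition, whose weak-compactness hypothesis is met precisely in that case. For $1^\circ \Longrightarrow 3^\circ$ I would invoke Theorem~\ref{th:non-sym_sep_theorem_1} ($1^\circ \Longrightarrow 4^\circ$): it yields $(x^*,\alpha) \in \cor\,C^{a+}$ satisfying \eqref{separation_condition_inequalities}, and the inclusion $\cor\,C^{a+} \subseteq C^{aw\#} \cap (X^* \times \mathbb{P})$ from Proposition~\ref{prop:properties_augmented_dual_cone} ($2^\circ$) immediately upgrades this pair to a witness for $3^\circ$. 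The implication $3^\circ \Longrightarrow 4^\circ$ is then just the trivial set inclusion $C^{aw\#} \cap (X^* \times \mathbb{P}) \subseteq C^{aw\#}$.

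The only implication carrying real content is $4^\circ \Longrightarrow 1^\circ$ under weak compactness of $\cl\,S_C$. For this I would not argue from scratch but route through $2^\circ$: Proposition~\ref{th:sep_cones_Banach_3} (whose hypothesis is exactly weak compactness of $\cl\,S_C$) gives the equivalence of its three assertions, in particular $4^\circ \Longrightarrow 2^\circ$; then, since $\cl\,S_C$ is one of the two sets allowed to be weakly compact, the already-proven implication $2^\circ \Longrightarrow 1^\circ$ closes the loop. I expect the main subtlety to be the bookkeeping of the weak-compactness hypotheses: Proposition~\ref{prop:basic separation} needs only one of $\cl\,S_C$, $\cl\,S_K^0$ to be weakly compact, whereas Proposition~\ref{th:sep_cones_Banach_3} specifically requires $\cl\,S_C$. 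This is precisely why the final implication is stated only under weak compactness of $\cl\,S_C$, and care is needed not to conflate the two hypotheses when chaining the two cited results.
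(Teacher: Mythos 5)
Your proposal is correct and follows essentially the same route as the paper's proof: $1^\circ \Leftrightarrow 2^\circ$ via the basic separation machinery (the paper cites Proposition~\ref{prop:basic separation_BP_a}, which itself rests on Proposition~\ref{prop:basic separation}), $1^\circ \Rightarrow 3^\circ$ via Theorem~\ref{th:non-sym_sep_theorem_1} combined with $\cor C^{a+} \subseteq C^{aw\#} \cap (X^* \times \mathbb{P})$, the trivial inclusion for $3^\circ \Rightarrow 4^\circ$, and $4^\circ \Rightarrow 2^\circ \Rightarrow 1^\circ$ via Proposition~\ref{th:sep_cones_Banach_3} under weak compactness of $\cl S_C$. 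Your attention to which weak-compactness hypothesis feeds which cited result matches the paper's bookkeeping exactly.
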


\begin{proof} 
$1^\circ \Longrightarrow 2^\circ$ follows by Proposition \ref{prop:basic separation_BP_a}; $1^\circ \Longrightarrow 3^\circ$ follows by Theorem~\ref{th:non-sym_sep_theorem_1} taking into account $\cor\,C^{a+} \subseteq C^{aw\#} \cap (X^* \times \mathbb{P})$ by Proposition \ref{prop:properties_augmented_dual_cone} ($2^\circ$); $3^\circ \Longrightarrow 4^\circ$  follows by the fact that $C^{aw\#} \cap (X^* \times \mathbb{P}) \subseteq C^{aw\#}$. If one of the sets $\cl\, S_{C}$ and $\cl\, S_K^0$ is weakly compact, then $2^\circ \Longrightarrow 1^\circ$ follows by Proposition \ref{prop:basic separation_BP_a}.
If $\cl\, S_{C}$ is weakly compact, then $4^\circ \Longrightarrow 2^\circ$ by Proposition \ref{th:sep_cones_Banach_3}, hence $4^\circ \Longrightarrow 1^\circ$ (by the previous proven implication).
\qed
\end{proof}

\begin{theorem} \label{th:non-sym_sep_theorem_4}
Consider the following assertions:
	\begin{itemize}
        \item[$1^\circ$] $0_X \notin \cl(S_{C}^0 - S_{K})$.
		\item[$2^\circ$] $(\cl\,S_{C}^0) \cap (\cl\, S_{K}) = \emptyset$.
		\item[$3^\circ$] There exists $(x^*, \alpha) \in (-K^{aw\#}) \cap (X^* \times -\mathbb{P})$ such that \eqref{separation_condition_inequalities} is valid (i.e., $\mathcal{N}(K, C \mid \mathcal{C}_{{\rm BP}^*_2}(K)) \neq \emptyset$).
        \item[$4^\circ$]  There exists $(x^*, \alpha) \in -K^{aw\#}$ such that  \eqref{separation_condition_inequalities} is valid (i.e., $\mathcal{N}(K, C \mid \mathcal{C}_{{\rm BP}^*_3}(K)) \neq \emptyset$).
	\end{itemize}	
    Then, $1^\circ \Longrightarrow 2^\circ$ and $1^\circ \Longrightarrow 3^\circ \Longrightarrow 4^\circ$. 
    If one of the sets $\cl\, S_{K}$ and $\cl\, S_{C}^0$ is weakly compact, then $2^\circ \Longrightarrow 1^\circ$.
    If $\cl\, S_{K}$ is weakly compact, then $4^\circ \Longrightarrow 1^\circ$.
\end{theorem}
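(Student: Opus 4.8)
The statement is the exact mirror of Theorem~\ref{th:non-sym_sep_theorem_3} with the roles of $C$ and $K$ swapped, so my plan is to derive everything from the already-proven Theorem~\ref{th:non-sym_sep_theorem_3} (together with Proposition~\ref{prop:basic separation_BP_a} and Proposition~\ref{th:sep_cones_Banach_3}) by the same sign-flip / role-swap device that was used inside the proof of Theorem~\ref{th:non-sym_sep_theorem_2}. The first observation is that $1^\circ$, namely $0_X \notin \cl(S_C^0 - S_K)$, is equivalent to $0_X \notin \cl(S_K - S_C^0)$, which is precisely assertion $1^\circ$ of Theorem~\ref{th:non-sym_sep_theorem_3} with $K$ in the role of $C$ and $C$ in the role of $K$. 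Likewise, assertion $2^\circ$ here, $(\cl\,S_C^0)\cap(\cl\,S_K) = \emptyset$, is assertion $2^\circ$ of that theorem after the same swap.

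The implications $1^\circ \Longrightarrow 2^\circ$, the conditional $2^\circ \Longrightarrow 1^\circ$ (under weak compactness of $\cl\,S_K$ or $\cl\,S_C^0$), the chain $1^\circ \Longrightarrow 3^\circ \Longrightarrow 4^\circ$, and the conditional $4^\circ \Longrightarrow 1^\circ$ (under weak compactness of $\cl\,S_K$) then follow directly once I match the separating-data. Concretely, I would invoke Theorem~\ref{th:non-sym_sep_theorem_3} applied to the pair $(K,C)$ to obtain, in place of its assertion $3^\circ$ (resp.\ $4^\circ$), the existence of $(x^*,\alpha)\in K^{aw\#}\cap(X^*\times\mathbb{P})$ (resp.\ $K^{aw\#}$) satisfying \eqref{separation_condition_inequalities} with $K$ and $C$ interchanged. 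The remaining task is to translate that swapped separation datum into the datum stated here. For this I reuse the bullet-point equivalences established in the proof of Theorem~\ref{th:non-sym_sep_theorem_2}: setting $(y^*,\beta) := (-x^*,-\alpha)$, one has $(x^*,\alpha)\in K^{aw\#}\cap(X^*\times\mathbb{P})$ if and only if $(y^*,\beta)\in (-K^{aw\#})\cap(X^*\times -\mathbb{P})$, and $(x^*,\alpha)$ satisfies \eqref{separation_condition_inequalities} with $K$, $C$ swapped exactly when $(y^*,\beta)$ satisfies \eqref{separation_condition_inequalities} in its original form (since flipping the sign of both $x^*$ and $\alpha$ sends $\varphi_{x^*,\alpha}$ to $\varphi_{-x^*,-\alpha}$ and simultaneously interchanges which cone lands in $\intt\,C(\cdot)$ versus its complement). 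This yields precisely assertions $3^\circ$ and $4^\circ$ as stated.

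\textbf{Main obstacle.} The only delicate point is bookkeeping the sign conventions so that the membership $(x^*,\alpha)\in -K^{aw\#}$ with $\alpha\in -\mathbb{P}$ is correctly paired with the \emph{unchanged} inequality \eqref{separation_condition_inequalities}; one must check that the negative-$\alpha$ cone $C(x^*,\alpha)$ (with $\alpha<0$) plays the separating role via Lemma~\ref{lem:properties_BP} ($1^\circ$), and that the weak-compactness hypothesis on $\cl\,S_K$ (needed for $4^\circ\Longrightarrow 1^\circ$) is the correct transport of the hypothesis on $\cl\,S_C$ in Theorem~\ref{th:non-sym_sep_theorem_3} after the swap. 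Once the dictionary $(x^*,\alpha)\leftrightarrow(-x^*,-\alpha)$ is fixed, every implication is a direct quotation of the swapped Theorem~\ref{th:non-sym_sep_theorem_3} together with Propositions~\ref{prop:basic separation_BP_a} and~\ref{th:sep_cones_Banach_3}; no genuinely new estimate is required.
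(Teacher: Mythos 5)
Your proposal is correct and follows essentially the same route as the paper's own proof: the paper likewise observes that $1^\circ$ is equivalent to $0_X \notin \cl(S_K - S_C^0)$, applies Theorem~\ref{th:non-sym_sep_theorem_3} with $K$ and $C$ in the roles of $C$ and $K$, and then translates the resulting separation data via the same sign-flip dictionary $(x^*,\alpha) \leftrightarrow (-x^*,-\alpha)$, noting that membership in $K^{aw\#}\cap(X^*\times\mathbb{P})$ (respectively, $K^{aw\#}$) with the swapped inequality \eqref{separation_condition_inequalities} is equivalent to membership in $(-K^{aw\#})\cap(X^*\times-\mathbb{P})$ (respectively, $-K^{aw\#}$) with the original one. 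Your bookkeeping of the weak-compactness hypothesis (on $\cl\,S_K$ after the swap) also matches the paper exactly.
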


\begin{proof} 
Taking into account that $1^\circ \Longleftrightarrow 0_X \notin \cl(S_{K} - S_C^0)$, the result follows immediately from Theorem \ref{th:non-sym_sep_theorem_3} (applied for $K$ and $C$ in the role of $C$ and $K$). Note that the following statements are equivalent:
\begin{itemize}
    \item[$\bullet$] There exists $(x^*, \alpha) \in K^{aw\#} \cap (X^* \times \mathbb{P})$ (respectively, $(x^*, \alpha) \in K^{aw\#}$) such that \eqref{separation_condition_inequalities} (with $K$ and $C$ in the role of $C$ and $K$) is valid.
    \item[$\bullet$] There exists $(x^*, \alpha) \in K^{aw\#} \cap (X^* \times \mathbb{P})$ (respectively, $(x^*, \alpha) \in K^{aw\#}$) such that \eqref{separation_condition_inequalities_new} is valid.
    \item[$\bullet$] There exists $(x^*, \alpha) \in (-K^{aw\#}) \cap (X^* \times -\mathbb{P})$ (respectively, $(x^*, \alpha) \in -K^{aw\#}$) such that \eqref{separation_condition_inequalities} is valid. 
\end{itemize}
\qed
\end{proof}

\begin{remark}
    Note that Theorems \ref{th:non-sym_sep_theorem_1} and \ref{th:non-sym_sep_theorem_3} (respectively, Theorems \ref{th:non-sym_sep_theorem_2} and \ref{th:non-sym_sep_theorem_4}) extend the separation results based on the non-symmetric approach derived in \cite{GarciaCastano2023}, \cite{gunther2023nonlinear_reflexive} and \cite{Kasimbeyli2010}. In particular, Theorems \ref{th:non-sym_sep_theorem_1} and \ref{th:non-sym_sep_theorem_2} provide refinements to the separation results appearing in \cite{GarciaCastano2023} by explicitly fixing the bounds on $\alpha$ via the supremum and infimum, and by proving that the pair $(x^*, \alpha)$ lies in the core of the augmented dual cone. Combining these theorems with Proposition \ref{prop:basic separation_BP_bd}, we observe that, when the cones meet only at the origin, the separation property introduced in \cite{GarciaCastano2023} can be obtained by considering only the boundaries of the cones (using an argument similar to that presented in Remark \ref{remark:separation_frontiers} below). On the other hand, our current separation results improve those of \cite[Th. 4.3]{Kasimbeyli2010}, since we obtain separation characterizations in general normed spaces, whereas \cite[Th. 4.3]{Kasimbeyli2010} restricted the analysis to reflexive spaces and, in some cases, required finite-dimensional spaces to establish equivalences. Moreover, \cite[Th. 4.3]{Kasimbeyli2010} requires closedness assumptions, which is not the case here in our results (similar to \cite{GarciaCastano2023} and \cite{gunther2023nonlinear_reflexive}, as can be seen in the restated results in Section \ref{sec:basics_nonsym_cone_separation}). In \cite{gunther2023nonlinear_reflexive}, characterizations based on $(\cl\,S_{K}^0) \cap (\cl\, S_{C}) = \emptyset$ are established (under weak compactness assumptions on $\cl\, S_{C}$), not on $0_X \notin \cl(S_{K}^0 - S_{C})$.
\end{remark}

The following result shows that two given cones, one of which contains the other, can be interpolated by a Bishop-Phelps cone under suitable assumptions.

\begin{theorem} \label{th:Omega1and2_BP_cone_nonsym}
The following assertions are valid:
\begin{itemize}
\item[$1^\circ$] If $0_X \notin \cl(S_{C} - S_{K}^0)$ (in particular, $K \cap C = \{0_X\}$), then there exists $C_{\rm BP} \in\mathcal{C}_{{\rm BP}^*_1}(C)$ such that
\begin{equation}
\label{eq:Omega1Omega2_cond1}
K \cap C_{\rm BP} = \{0_X\} \quad \text{ and } \quad C \setminus \{0_X\} \subseteq \intt\, C_{\rm BP} \quad \text{ and } \quad C \subseteq C_{\rm BP}.
\end{equation}
\item[$2^\circ$] Define $\widehat{K} := (X \setminus K)\cup \{0_X\}$. If $C \subseteq K$ and $0_X \notin \cl(S_{C} - S_{\widehat{K}}^0)$ (both conditions imply $\widehat{K} \cap C = \{0_X\}$), then there exists $C_{\rm BP} \in\mathcal{C}_{{\rm BP}^*_1}(C)$ such that
\begin{equation}
\label{eq:Omega1Omega2_cond2}
C \setminus \{0_X\} \subseteq \intt\, C_{\rm BP} \subseteq C_{\rm BP} \subseteq K,
\end{equation}
which implies
\begin{equation}
\label{eq:Omega1Omega2_cond3}
\intt\,C \subseteq \intt\,C_{\rm BP} \subseteq \intt\,K \quad \text{ and }\quad
C \subseteq C_{\rm BP} \subseteq K.
\end{equation}
\end{itemize}	
\end{theorem}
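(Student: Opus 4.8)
The plan is to read both parts off Theorem~\ref{th:non-sym_sep_theorem_1}, translating its analytic separation inequality \eqref{separation_condition_inequalities} into the geometric form recorded in Remark~\ref{rem:sep_conditions}. For $1^\circ$, the hypothesis $0_X \notin \cl(S_C - S_K^0)$ is verbatim assertion $1^\circ$ of Theorem~\ref{th:non-sym_sep_theorem_1}, so by the equivalence $1^\circ \Longleftrightarrow 4^\circ$ I obtain a pair $(x^*,\alpha) \in \cor\,C^{a+}$ for which \eqref{separation_condition_inequalities} holds, and I set $C_{\rm BP} := C(x^*,\alpha) \in \mathcal{C}_{{\rm BP}^*_1}(C)$. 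Because $\cor\,C^{a+} \subseteq C^{a\#}\cap(X^*\times\mathbb{P})$ by Proposition~\ref{prop:properties_augmented_dual_cone}~($2^\circ$), the standing hypothesis of Remark~\ref{rem:sep_conditions} is met, so \eqref{separation_condition_inequalities} is equivalent to the geometric condition \eqref{separation_condition_cones_2}; this delivers $K \cap C_{\rm BP} = \{0_X\}$ and $C\setminus\{0_X\} \subseteq \intt\,C_{\rm BP}$ simultaneously, and $C\subseteq C_{\rm BP}$ follows since $0_X\in C_{\rm BP}$.

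For $2^\circ$ the idea is to feed $\widehat{K} := (X\setminus K)\cup\{0_X\}$ into part $1^\circ$ in the role of $K$. First I would verify that $\widehat{K}$ is a non-trivial cone: it contains $0_X$, and since $K$ is a cone, $x\notin K$ forces $\lambda x \notin K$ for every $\lambda>0$, giving $\widehat{K}=\mathbb{R}_+\cdot\widehat{K}$, while $K\neq X$ and $K\neq\{0_X\}$ rule out $\widehat{K}=\{0_X\}$ and $\widehat{K}=X$, respectively. Our hypothesis $0_X\notin\cl(S_C-S_{\widehat{K}}^0)$ is then exactly the hypothesis of part $1^\circ$ for the pair $(C,\widehat{K})$, so I get $C_{\rm BP}\in\mathcal{C}_{{\rm BP}^*_1}(C)$ with $\widehat{K}\cap C_{\rm BP}=\{0_X\}$ and $C\setminus\{0_X\}\subseteq\intt\,C_{\rm BP}$.

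The remaining step is to turn $\widehat{K}\cap C_{\rm BP}=\{0_X\}$ into the upper inclusion $C_{\rm BP}\subseteq K$. Since $K$ is a cone, $0_X\in K$, hence $0_X\notin X\setminus K$ and $\widehat{K}\cap C_{\rm BP}=((X\setminus K)\cap C_{\rm BP})\cup\{0_X\}$; the assumption therefore forces $(X\setminus K)\cap C_{\rm BP}=\emptyset$, i.e.\ $C_{\rm BP}\subseteq K$. Combined with $C\setminus\{0_X\}\subseteq\intt\,C_{\rm BP}$ this yields the chain \eqref{eq:Omega1Omega2_cond2}, and \eqref{eq:Omega1Omega2_cond3} is immediate by monotonicity of the interior ($C\subseteq C_{\rm BP}$ gives $\intt\,C\subseteq\intt\,C_{\rm BP}$, and $C_{\rm BP}\subseteq K$ gives $\intt\,C_{\rm BP}\subseteq\intt\,K$). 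I expect no serious obstacle here: the one point demanding care is the bookkeeping that $\widehat{K}$ is a non-trivial cone so that part $1^\circ$ genuinely applies, together with the complement identity above; all the rest is a direct passage between the analytic separation \eqref{separation_condition_inequalities} and its geometric counterpart \eqref{separation_condition_cones_2}.
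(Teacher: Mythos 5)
Your proposal is correct and follows essentially the same route as the paper: both parts are read off from Theorem~\ref{th:non-sym_sep_theorem_1} (yielding $(x^*,\alpha)\in\cor\,C^{a+}$, hence $C_{\rm BP}=C(x^*,\alpha)\in\mathcal{C}_{{\rm BP}^*_1}(C)$) together with the analytic-to-geometric translation of Remark~\ref{rem:sep_conditions}, with part $2^\circ$ obtained by substituting $\widehat{K}$ for $K$. The only cosmetic difference is that in $2^\circ$ the paper reads off the form \eqref{separation_condition_cones_1} directly to get $X\setminus K\subseteq X\setminus C_{\rm BP}$, whereas you invoke part $1^\circ$ and convert $\widehat{K}\cap C_{\rm BP}=\{0_X\}$ into $C_{\rm BP}\subseteq K$ by a complement identity — an equivalent step, and your explicit check that $\widehat{K}$ is a non-trivial cone fills in a detail the paper leaves implicit.
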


\begin{proof}
    $1^\circ$. By Theorem \ref{th:non-sym_sep_theorem_1}, if $0_X \notin \cl(S_{C} - S_{K}^0)$, there exists $(x^*, \alpha) \in \cor\,C^{a+}$ such that \eqref{separation_condition_inequalities}, or equivalently \eqref{separation_condition_cones_2}, is valid, i.e., 
    \begin{equation*}
    K \cap C(x^*, \alpha) = \{0_X\} \quad \text{and} \quad  C \setminus \{0_X\} \subseteq \intt\, C(x^*, \alpha).
    \end{equation*}
    Thus, for $C_{\rm BP} := C(x^*, \alpha) \in\mathcal{C}_{{\rm BP}^*_1}(C)$ the statement in \eqref{eq:Omega1Omega2_cond1} is valid.

    $2^\circ$. Under the given assumptions, the cone $\widehat{K}$ is non-trivial. Again by Theorem \ref{th:non-sym_sep_theorem_1}, if $0_X \notin \cl(S_{C} - S_{\widehat{K}}^0)$, then there exists $(x^*, \alpha) \in \cor\,C^{a+}$ such that \eqref{separation_condition_inequalities} (with $\widehat{K}$ in the role of $K$), or equivalently \eqref{separation_condition_cones_1}, is valid, i.e., 
    \begin{equation*}
    \widehat{K} \setminus \{0_X\} \subseteq X \setminus C(x^*, \alpha) \quad \text{and} \quad  C \setminus \{0_X\} \subseteq \intt\, C(x^*, \alpha).
    \end{equation*}
    Thus, for $C_{\rm BP} := C(x^*, \alpha) \in\mathcal{C}_{{\rm BP}^*_1}(C)$ we have 
    \begin{equation*}
    X \setminus K \subseteq X \setminus C_{\rm BP}\quad \text{and} \quad  C \setminus \{0_X\} \subseteq \intt\, C_{\rm BP},
    \end{equation*}
    or equivalently, \eqref{eq:Omega1Omega2_cond2} is valid.
    Noting that $\intt\,C \subseteq C \setminus \{0_X\}$ (since $C \neq X$), the remaining inclusions given in \eqref{eq:Omega1Omega2_cond3} are obvious.\qed
\end{proof}

\begin{remark}\label{remark:separation_frontiers}
We note that, taking into account Proposition \ref{prop:basic separation_BP_bd}, the conclusions of the previous theorem for the cones  $C$ and $K$ also hold under the assumption of the separation property of their boundaries. More specifically, in $1^\circ$, the condition $0_X \notin \cl(S_{C} - S_{K}^0)$ can be replaced by the conditions $0_X \notin \cl(S_{\bd C} - S_{\bd K}^0)$ and $K \cap C = \{0_X\}$. Likewise, in $2^\circ$, the condition $0_X \notin \cl(S_{C} - S^0_{\widehat{K}})$ can be replaced by
$0_X \notin \cl(S_{\bd C} - S_{\bd K}^0)$, while maintaining the inclusion $C\subseteq K$.
\end{remark}

Under a convexity assumption concerning $K$, we get, in addition to the result in Proposition \ref{prop:basic separation_BP_a}, the following non-symmetric cone separation result (involving linear separation).

\begin{proposition} \label{prop:basic separation_BP_b} 
Assume that $K$ is convex. Consider the following assertions:
\begin{itemize}
\item[$1^\circ$] $0_X \notin \cl(S_{C} - S_{K}^0)$.
\item[$2^\circ$]
$0_X \notin \cl\, S_{C}$ and there exists $x^* \in X^* \setminus \{0_{X^*}\}$ such that 
\begin{equation*}
    x^*(k) \geq 0 > x^*(c) \quad \mbox{for all } k \in \cl\,K \mbox{ and  }c \in \cl(\conv\,C)\setminus \{0_X\}.
\end{equation*}
\item[$3^\circ$]
$0_X \notin \cl\, S_{C}$ and there exists $x^* \in X^* \setminus \{0_{X^*}\}$ such that 
\begin{equation*}
    x^*(k) \geq 0 > x^*(c) \quad \mbox{for all } k \in \cl\,S_{K}^0 \mbox{ and  }c \in \cl\, S_{C}.
\end{equation*}
\item[$4^\circ$] There exists $x^* \in X^* \setminus \{0_{X^*}\}$ such that $\sup_{k \in \cl\,S_{K}^0} x^*(k) = 0 < \inf_{c \in \cl\, S_{C}} x^*(c)$.
\end{itemize}	
Then, $4^\circ \Longrightarrow 1^\circ \Longrightarrow 2^\circ \Longleftrightarrow 3^\circ$, and if $\cl\, S_{C}$ is weakly compact, then $3^\circ \Longrightarrow 4^\circ$.
\end{proposition}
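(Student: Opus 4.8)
The plan is to route the four implications through the generic separation statement Proposition~\ref{prop:basic separation} together with an elementary equivalence $2^\circ \Longleftrightarrow 3^\circ$, reserving the genuine work for $1^\circ \Longrightarrow 2^\circ$. The implication $4^\circ \Longrightarrow 1^\circ$ is immediate: $4^\circ$ supplies $x^* \in X^* \setminus \{0_{X^*}\}$ with $\sup_{k \in \cl\,S_K^0} x^*(k) < \inf_{c \in \cl\,S_C} x^*(c)$, and Proposition~\ref{prop:basic separation} ($2^\circ \Longrightarrow 1^\circ$, with $\Omega^1 := S_K^0$ and $\Omega^2 := S_C$) gives $0_X \notin \cl(S_C - S_K^0)$. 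For $2^\circ \Longleftrightarrow 3^\circ$ I would argue by hand, since both assertions share the clause $0_X \notin \cl\,S_C$ and only the separating inequalities must be matched. These transfer verbatim using the cone structure: from $0_X \notin \cl\,S_C$, Lemma~\ref{lem:clconvCandclSC} gives $\cl(\conv C)\setminus\{0_X\} = \mathbb{P}\cdot \cl\,S_C$, so $x^*(c) < 0$ on $\cl\,S_C$ is equivalent to $x^*(c) < 0$ on $\cl(\conv C)\setminus\{0_X\}$; likewise, as $B_K \subseteq K \subseteq \cl\,K$ and $\cl\,S_K^0 \subseteq \cl\,K$, positivity of $x^*$ on $\cl\,S_K^0$ and on $\cl\,K$ are interchangeable by positive homogeneity and continuity. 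The same $x^*$ then works in both directions.

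The core is $1^\circ \Longrightarrow 2^\circ$. First I would upgrade the strong separation of the bases to a strong separation of $\cl\,S_C$ from the entire cone $\cl\,K$. Indeed, $1^\circ$ means $d(S_C, S_K^0) > 0$ (Remark~\ref{rmk: equivalent_condition}); since $S_C \subseteq \mathbb{B}_X$ is bounded while every far-out point of $K$ has large norm, any sequence in $K$ approaching $S_C$ must stay bounded, and writing such points as $r_n b_n$ with $b_n \in B_K$ and $r_n \to 1$ forces them arbitrarily close to $\cl\,S_K^0$, contradicting $d(S_C, S_K^0) > 0$. Hence $d(\cl\,S_C, \cl\,K) > 0$, i.e. $0_X \notin \cl(\cl\,S_C - \cl\,K)$. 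Applying Proposition~\ref{prop:basic separation} to the convex sets $\cl\,S_C$ and $\cl\,K$ (here the convexity of $K$ is used) yields $y^* \in X^*\setminus\{0_{X^*}\}$ with $\sup_{k \in \cl\,K} y^*(k) < \inf_{c \in \cl\,S_C} y^*(c)$. Because $\cl\,K$ is a cone, a finite supremum forces $y^*|_{\cl\,K} \leq 0$, as otherwise $y^*(\lambda k) = \lambda\,y^*(k) \to +\infty$; thus $\sup_{k \in \cl\,K} y^*(k) = 0 < \inf_{c \in \cl\,S_C} y^*(c)$. Setting $x^* := -y^*$ gives $x^*(k) \geq 0$ on $\cl\,K$ and $x^*(c) < 0$ on $\cl\,S_C$, hence on $\cl(\conv C)\setminus\{0_X\}$ by homogeneity; together with $0_X \notin \cl\,S_C$ this is precisely $2^\circ$.

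For $3^\circ \Longrightarrow 4^\circ$ under weak compactness of $\cl\,S_C$, I would take the functional $y^*$ from $3^\circ$ and set $x^* := -y^*$, so that $\sup_{k\in\cl\,S_K^0} x^*(k) = 0$ (the value $0$ is attained at $0_X \in \cl\,S_K^0$ and is an upper bound by $3^\circ$). The only gap is promoting the pointwise inequality $x^*(c) > 0$ on $\cl\,S_C$ to $\inf_{c\in\cl\,S_C} x^*(c) > 0$; weak compactness closes it, since the weakly continuous $x^*$ attains its infimum on $\cl\,S_C$ at a point where it is strictly positive, giving $4^\circ$. I expect the main obstacle to be the homogenization inside $1^\circ \Longrightarrow 2^\circ$, namely converting the strong separation of the bounded base $S_C$ from $S_K^0$ into a separation of $\cl\,S_C$ from the unbounded cone $\cl\,K$ via the distance estimate $d(\cl\,S_C,\cl\,K)>0$; once that is in place, the cone argument $\sup_{\cl\,K} y^* = 0$ and the sign flip are routine.
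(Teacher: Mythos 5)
Your proof is correct, and its overall skeleton ($4^\circ \Rightarrow 1^\circ$ via Proposition~\ref{prop:basic separation}, the equivalence $2^\circ \Leftrightarrow 3^\circ$ via Lemma~\ref{lem:clconvCandclSC} and positive homogeneity, $3^\circ \Rightarrow 4^\circ$ via weak compactness and sign flip) matches the paper's. But the core implication $1^\circ \Rightarrow 2^\circ$ is proved by a genuinely different route. The paper first invokes Theorem~\ref{th:non-sym_sep_theorem_1} together with Remark~\ref{rem:sep_conditions} to produce a Bishop--Phelps cone $C_{\rm BP} = C(y^*,\alpha)$ with $(\cl\,K) \cap C_{\rm BP} = \{0_X\}$ and $\cl(\conv\,C)\setminus\{0_X\} \subseteq \intt\,C_{\rm BP}$, and then applies Eidelheit's weak separation (Proposition~\ref{prop:linear_cone_separation_weak}) to the convex sets $\cl\,K$ and $\intt\,C_{\rm BP}$; the nonlinear cone separation theorem is the engine, and the linear functional comes from a second, weak separation step. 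You instead homogenize the hypothesis directly: from $0_X \notin \cl(S_C - S_K^0)$ you deduce $d(\cl\,S_C,\cl\,K)>0$ by a rescaling argument (a point $k \in K$ close to the bounded set $S_C \subseteq \mathbb{B}_X$ has $\|k\| \le 1 + o(1)$, so its truncation onto $S_K^0$ is within $\max(0,\|k\|-1) \to 0$ of $k$), then apply the strong separation Proposition~\ref{prop:basic separation} to $\cl\,S_C$ and the convex cone $\cl\,K$, and use positive homogeneity to pin $\sup_{k \in \cl\,K} y^*(k) = 0$. This is more elementary and self-contained: it bypasses Theorem~\ref{th:non-sym_sep_theorem_1} and the augmented-dual-cone machinery behind it. It also yields something slightly stronger than the stated $2^\circ$: your functional comes with a uniform gap, $0 = \sup_{k \in \cl\,S_K^0} y^*(k) < \inf_{c \in \cl\,S_C} y^*(c)$, i.e.\ you in fact prove $1^\circ \Rightarrow 4^\circ$ with no weak compactness, so $1^\circ \Leftrightarrow 4^\circ$ holds unconditionally (weak compactness is then only needed to pass from the pointwise inequalities of $3^\circ$ back to the gap). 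What the paper's detour buys instead is structural: the separating hyperplane is exhibited as supporting a Bishop--Phelps separating cone, the object exploited throughout Section~\ref{sec:sym_and_non-sym_cone_separation}.

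Two small points to tighten. In the homogenization step you assert the scalars satisfy ``$r_n \to 1$''; that is not what the argument gives (the norms $r_n = \|k_n\|$ may converge to anything in $[0,1]$), nor what you need: since $\|c_n\| \le 1$ you only get $\limsup_n r_n \le 1$, which suffices because points with $r_n \le 1$ already lie in $S_K^0$ while the excess $\max(0,r_n-1)$ tends to $0$. Also record explicitly that $0_X \notin \cl\,S_C$ follows from $1^\circ$ (since $0_X \in S_K^0$, one has $\cl\,S_C \subseteq \cl(S_C - S_K^0)$); this is needed both for the identity $\cl(\conv\,C)\setminus\{0_X\} = \mathbb{P}\cdot\cl\,S_C$ from Lemma~\ref{lem:clconvCandclSC} and for the first clause of $2^\circ$.
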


\begin{proof} Note that $1^\circ$ means exactly (in view of Proposition \ref{prop:basic separation_BP_a}) that there is $x^* \in X^* \setminus \{0_{X^*}\}$ such that $\sup_{k \in \cl\,S_{K}^0} x^*(k) < \inf_{c \in \cl\, S_{C}} x^*(c)$.
Thus, the implication $4^\circ \Longrightarrow 1^\circ$ is obvious.

$1^\circ \Longrightarrow 2^\circ$: Assume that $1^\circ$ is valid. Of course, $0_X \notin \cl\,S_{C}$. In view of Theorem \ref{th:non-sym_sep_theorem_1} and Remark \ref{rem:sep_conditions}, there exists $(y^*, \alpha) \in \cor\,C^{a+}$ such that
$(\cl\,K) \cap C_{\rm BP} = \{0_X\}$ and 
$\cl(\conv\,C) \setminus \{0_X\} \subseteq \intt\,C_{\rm BP}$ for $C_{\rm BP} := C(y^*, \alpha)$. By the linear weak separation result in Proposition \ref{prop:linear_cone_separation_weak} (noting that $(\cl\,K) \cap (\intt\,C_{\rm BP}) =  \emptyset$ since $0_X \notin \intt\,C_{\rm BP}$) there is $x^* \in X^* \setminus \{0_{X^*}\}$ such that 
$$x^*(k) \geq 0 > x^*(c) \quad \text{for all } k \in \cl\,K \text{ and } c \in \intt\,C_{\rm BP} \,(\supseteq \cl(\conv\,C) \setminus \{0_X\}),$$
hence $2^\circ$ is valid. 

$2^\circ \Longleftrightarrow 3^\circ$: This equivalence follows immediately from the fact that $\mathbb{P} \cdot \cl\,S_{C} = \cl(\conv\,C) \setminus \{0_X\}$ and $\mathbb{R}_+ \cdot \cl\,S_K^0 = \cl(\conv\,K) = \cl\,K$ (by Lemma \ref{lem:clconvCandclSC}).

$3^\circ \Longrightarrow 4^\circ$: Assume that $3^\circ$ holds true. Then, we easily infer $\sup_{k \in \cl\,S_{K}^0} y^*(k) = 0 < y^*(c)$ for all $c \in \cl\, S_{C}$, where $y^* := -x^* \in X^* \setminus \{0_{X^*}\}$. Under the weak compactness of $\cl\, S_{C}$, we conclude $4^\circ$. \qed
\end{proof}

\begin{remark}
    The result in Proposition \ref{prop:basic separation_BP_b} combined with Proposition \ref{prop:basic separation_BP_a}  extends the cone separation result (for non-trivial closed convex cones) mentioned in Proposition \ref{prop:linear_cone_separation} derived under a weak compactness assumption on $\cl\, S_{C}$. 
\end{remark}
   
%---------------------------------------------------------
\subsection{Symmetric strict cone separation}
%---------------------------------------------------------

Let us now discuss the symmetric case. Our aim is to find characterizations for
$\mathcal{S}(C, K \mid \mathcal{C}_{{\rm BP}^*}) \neq \emptyset
$ 
for some family of cones  
$\mathcal{C}_{{\rm BP}^*} \subseteq \mathcal{C}_{{\rm BP}}$, which then also ensure that $C$ and $K$ can be strictly separated by a Bishop-Phelps cone $C(x^*, \alpha)$ (with $||x^*||_* > \alpha > 0$) in a symmetric way, i.e, $\mathcal{S}(C, K \mid \mathcal{C}_{{\rm BP}}) \neq \emptyset$.
More precisely, for any non-trivial cones $D^1, D^2 \subseteq X$, we consider a family $\mathcal{C}_{{\rm BP}^*}$ given by one of the following two families
\begin{align*}
    \mathcal{C}_{{\rm BP}^*_1}(D^1, D^2) & := \mathcal{C}_{{\rm BP}^*_1}(D^1) \cup \mathcal{C}_{{\rm BP}^*_1}(D^2) \\
    & = \{C(x^*, \alpha) \mid (x^*, \alpha) \in (\cor\,(D^1)^{a+}) \cup(\cor\,(D^2)^{a+})\};\\
    \mathcal{C}_{{\rm BP}^*_2}(D^1, D^2) & := \mathcal{C}_{{\rm BP}^*_2}(D^1) \cup \mathcal{C}_{{\rm BP}^*_2}(D^2) \\
    & = \{C(x^*, \alpha) \mid (x^*, \alpha) \in ((D^1)^{aw\#}) \cup ((D^2)^{aw\#}) \cap (X^* \times \mathbb{P})\};
\end{align*}
In view of Proposition \ref{prop:properties_augmented_dual_cone} ($2^\circ$) we have $\mathcal{C}_{{\rm BP}^*_1}(D^1, D^2) \subseteq \mathcal{C}_{{\rm BP}^*_2}(D^1, D^2) \subseteq \mathcal{C}_{{\rm BP}}$, and if $\cl_w B_{D^1}$ and $\cl_w B_{D^2}$ are weakly compact, then  $\mathcal{C}_{{\rm BP}^*_1}(D^1, D^2) = \mathcal{C}_{{\rm BP}^*_2}(D^1, D^2)$.
In addition, we need the following family of cones
\begin{align*}
    \mathcal{C}_{{\rm BP}^*_3}(D^1, D^2) & := \mathcal{C}_{{\rm BP}^*_3}(D^1) \cup \mathcal{C}_{{\rm BP}^*_3}(D^2) \\
    & = \{C(x^*, \alpha) \mid (x^*, \alpha) \in ((D^1)^{aw\#}) \cup ((D^2)^{aw\#})\}.
\end{align*}
Note that $\mathcal{C}_{{\rm BP}^*_2}(D^1, D^2) \subseteq \mathcal{C}_{{\rm BP}^*_3}(D^1, D^2) \subseteq \mathcal{C}_{{\rm Lin}} \cup \mathcal{C}_{{\rm BP}}$ and $\mathcal{C}_{{\rm BP}^*_3}(D^1, D^2) \setminus \mathcal{C}_{{\rm BP}^*_2}(D^1, D^2) \subseteq \mathcal{C}_{{\rm Lin}}$.

Consider $i \in \{1,2,3\}$. In this section, we are able to give  characterizations for the condition
$$
\mathcal{S}(C, K \mid \mathcal{C}_{{\rm BP}^*_i}(C, K)) = \mathcal{N}(C, K \mid \mathcal{C}_{{\rm BP}^*_i}(C, K))  \cup \mathcal{N}(K, C \mid \mathcal{C}_{{\rm BP}^*_i}(C, K))  \neq \emptyset. 
$$
Since
$\mathcal{C}_{{\rm BP}^*_i}(C, K) = \mathcal{C}_{{\rm BP}^*_i}(C) \cup \mathcal{C}_{{\rm BP}^*_i}(K) = \mathcal{C}_{{\rm BP}^*_i}(K, C)$ 
we have
\begin{align*}
    \mathcal{N}(C, K \mid \mathcal{C}_{{\rm BP}^*_i}(C, K)) = \mathcal{N}(C, K \mid \mathcal{C}_{{\rm BP}^*_i}(C)) \cup \mathcal{N}(C, K \mid \mathcal{C}_{{\rm BP}^*_i}(K)),\\
    \mathcal{N}(K, C \mid \mathcal{C}_{{\rm BP}^*_i}(C, K)) = \mathcal{N}(K, C \mid \mathcal{C}_{{\rm BP}^*_i}(C)) \cup \mathcal{N}(K, C \mid \mathcal{C}_{{\rm BP}^*_i}(K)).
\end{align*}
It is easy to check that $\mathcal{N}(C, K \mid \mathcal{C}_{{\rm BP}^*_i}(K)) = \mathcal{N}(K, C \mid \mathcal{C}_{{\rm BP}^*_i}(C)) = \emptyset$, hence
\begin{align*}
    \mathcal{N}(C, K \mid \mathcal{C}_{{\rm BP}^*_i}(C, K)) & = \mathcal{N}(C, K \mid \mathcal{C}_{{\rm BP}^*_i}(C)),\\
    \mathcal{N}(K, C \mid \mathcal{C}_{{\rm BP}^*_i}(C, K)) & = \mathcal{N}(K, C \mid \mathcal{C}_{{\rm BP}^*_i}(K)),\\
    \mathcal{S}(C, K \mid \mathcal{C}_{{\rm BP}^*_i}(C, K)) & = \mathcal{N}(C, K \mid \mathcal{C}_{{\rm BP}^*_i}(C))  \cup \mathcal{N}(K, C \mid \mathcal{C}_{{\rm BP}^*_i}(K)).
\end{align*}

Let us first present our general result (without involving a weak compactness assumption) that gives characterizations for $\mathcal{S}(C, K \mid \mathcal{C}_{{\rm BP}^*_1}(C, K)) \neq \emptyset$.

\begin{theorem} \label{th:sym_sep_theorem_1a}
The following assertions are equivalent:
    \begin{itemize}
    \item[$1^\circ$] $0_X \notin \cl(S_{C} - S_K)$.
    \item[$2^\circ$]  There exists $x^* \in X^* \setminus \{0_{X^*}\}$ such that, for any $$\alpha \in (\sup_{k \in \cl\,S_{K}} x^*(k), \inf_{c \in \cl\, S_{C}} x^*(c)),$$
    we have \eqref{separation_condition_inequalities} is valid, where $\sup_{k \in \cl\,S_{K}} x^*(k) < \inf_{c \in \cl\, S_{C}} x^*(c)$.    
    \item[$3^\circ$]  There exists $x^* \in X^*$ such that $(x^*, \alpha) \in \cor\,C^{a+}$ and \eqref{separation_condition_inequalities}
    are valid for all 
    $$\alpha \in (\sup_{k \in \cl\,S_{K}^0} x^*(k), \inf_{c \in \cl\, S_{C}} x^*(c)) \quad (\subseteq (0,+\infty)),$$ where $\sup_{k \in \cl\,S_{K}^0} x^*(k) < \inf_{c \in \cl\, S_{C}} x^*(c)$,
    or $(x^*, \alpha) \in -\cor\,K^{a+}$ and  
    \eqref{separation_condition_inequalities}
    are valid for all $$\alpha \in (\sup_{k \in \cl\, S_{K}} x^*(k), \inf_{c \in \cl\,S_{C}^0} x^*(c)) \quad (\subseteq (-\infty, 0)),$$ where $\sup_{k \in \cl\, S_{K}} x^*(k) < \inf_{c \in \cl\,S_{C}^0} x^*(c)$.
    \item[$4^\circ$] There exists $(x^*, \alpha) \in (\cor\,C^{a+}) \cup (-\cor\,K^{a+})$ such that \eqref{separation_condition_inequalities} is valid (i.e., $\mathcal{S}(C, K \mid \mathcal{C}_{{\rm BP}^*_1}(C, K)) \neq \emptyset$).
	\end{itemize}	
 Moreover, the assertions $1^\circ$--$4^\circ$ are also equivalent to the corresponding assertions where the cone $C$ is replaced by $\cl\,C$ while $K$ is replaced by $\cl\,K$.
\end{theorem}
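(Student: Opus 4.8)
The plan is to obtain the whole equivalence by assembling the already-proven non-symmetric results, applied twice (once treating $C$ as the ``inner'' cone and once treating $K$), and then gluing the two cases together via the disjunctive characterization of $0_X \notin \cl(S_{C} - S_K)$ recorded in Proposition~\ref{prop:basic separation_BP_e}. In particular I do not expect to carry out any genuinely new separation argument: everything should reduce to Theorems~\ref{th:non-sym_sep_theorem_1} and~\ref{th:non-sym_sep_theorem_2} together with Propositions~\ref{prop:basic separation}, \ref{prop:basic separation_BP_e} and~\ref{prop:basic separation_BP_bd_1}.

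I would first dispose of $1^\circ \Longleftrightarrow 2^\circ$ directly. Applying Proposition~\ref{prop:basic separation} to $\Omega^1 = S_K$ and $\Omega^2 = S_{C}$, assertion $1^\circ$ is equivalent to the existence of $x^* \in X^* \setminus \{0_{X^*}\}$ with $\sup_{k \in \cl\,S_{K}} x^*(k) < \inf_{c \in \cl\, S_{C}} x^*(c)$ (the supremum and infimum over a set agreeing with those over its closure by continuity of $x^*$). For such an $x^*$ and any $\alpha$ in the indicated open interval, a routine normalization gives \eqref{separation_condition_inequalities}: for $k \in -K\setminus\{0_X\}$ the inequality $x^*(k) + \alpha\|k\| > 0$ reduces to $x^*(b) < \alpha$ for every $b \in B_K$, which is forced by $\alpha > \sup_{k \in \cl\,S_{K}} x^*(k)$, and symmetrically the condition on $c \in -C\setminus\{0_X\}$ is forced by $\alpha < \inf_{c \in \cl\,S_{C}} x^*(c)$. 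Conversely, only the strict inequality in the ``where'' clause of $2^\circ$ is needed, so $2^\circ \Longrightarrow 1^\circ$ again follows from Proposition~\ref{prop:basic separation}.

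The heart of the argument is $1^\circ \Longleftrightarrow 3^\circ \Longleftrightarrow 4^\circ$, which I would prove by recognizing each disjunct. The first disjunct of $3^\circ$ is verbatim assertion $2^\circ$ of Theorem~\ref{th:non-sym_sep_theorem_1}, and the first disjunct of $4^\circ$, namely the existence of $(x^*,\alpha) \in \cor\,C^{a+}$ with \eqref{separation_condition_inequalities}, is assertion $4^\circ$ of that same theorem; hence both are equivalent to $0_X \notin \cl(S_{C} - S_K^0)$. Symmetrically, the second disjunct of $3^\circ$ is assertion $3^\circ$ of Theorem~\ref{th:non-sym_sep_theorem_2}, and the second disjunct of $4^\circ$, the case $(x^*,\alpha) \in -\cor\,K^{a+}$, is its assertion $5^\circ$; hence both are equivalent to $0_X \notin \cl(S_{C}^0 - S_K)$. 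Taking the disjunctions, I conclude that each of $3^\circ$ and $4^\circ$ is equivalent to ``$0_X \notin \cl(S_{C} - S_K^0)$ or $0_X \notin \cl(S_{C}^0 - S_K)$'', which by Proposition~\ref{prop:basic separation_BP_e} ($1^\circ \Longleftrightarrow 4^\circ$) is precisely $1^\circ$.

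For the ``moreover'' clause I would first invoke Proposition~\ref{prop:basic separation_BP_bd_1} ($1^\circ \Longleftrightarrow 2^\circ$) to get $0_X \notin \cl(S_{C} - S_K) \Longleftrightarrow 0_X \notin \cl(S_{\cl\,C} - S_{\cl\,K})$, and observe that any of $1^\circ$--$4^\circ$ forces $\cl\,C$ and $\cl\,K$ to be non-trivial (through Proposition~\ref{prop:basic separation_BP_f}, since $(\cl\,K)\cap(\cl\,C)=\{0_X\}$ rules out $\cl\,C = X$ or $\cl\,K = X$). Applying the equivalences already established to the non-trivial closed cones $\cl\,C,\cl\,K$ then yields the remaining statement, using $C^{a+} = (\cl\,C)^{a+}$ and $K^{a+} = (\cl\,K)^{a+}$ from \eqref{eq:clconvCa+=Ca+}. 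The main obstacle I anticipate is purely organizational: keeping the role-swap and sign change straight when matching the second disjunct to Theorem~\ref{th:non-sym_sep_theorem_2} (where $\alpha$ ranges in $(-\infty,0)$ and one works with $-\cor\,K^{a+}$), and verifying that the various $\sup$/$\inf$ bounds appearing in $3^\circ$ coincide exactly with those in the cited theorems rather than performing any new computation.
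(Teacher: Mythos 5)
Your proposal is correct and follows essentially the same route as the paper's proof: Proposition \ref{prop:basic separation} for $1^\circ \Longleftrightarrow 2^\circ$; Proposition \ref{prop:basic separation_BP_e} combined with the internal equivalences of Theorems \ref{th:non-sym_sep_theorem_1} and \ref{th:non-sym_sep_theorem_2} (role-swap and sign change included) for $1^\circ \Longleftrightarrow 3^\circ \Longleftrightarrow 4^\circ$; and Proposition \ref{prop:basic separation_BP_bd_1} plus a non-triviality check before re-applying the equivalences to $\cl\,C$, $\cl\,K$ for the ``moreover'' clause. The only minor looseness is that your non-triviality argument via Proposition \ref{prop:basic separation_BP_f} directly covers the assertions stated for $C$ and $K$, whereas for the assertions stated for $\cl\,C$ and $\cl\,K$ one still needs the short direct verification that they exclude $\cl\,C = X$ or $\cl\,K = X$ --- a point the paper itself dismisses with ``it is easy to check'' for all eight assertions.
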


\begin{proof} 
By Proposition \ref{prop:basic separation} assertion $1^\circ$ is valid if and only if there is $x^* \in X^* \setminus \{0_{X^*}\}$ such that $\sup_{k \in \cl\,S_{K}} x^*(k) < \inf_{c \in \cl\, S_{C}} x^*(c)$. Moreover, it is easy to check that $\sup_{k \in \cl\,S_{K}} x^*(k) < \alpha < \inf_{c \in \cl\, S_{C}} x^*(c)$
implies \eqref{separation_condition_inequalities}. This shows that $1^\circ \Longleftrightarrow 2^\circ$. By Proposition \ref{prop:basic separation_BP_e} ($1^\circ \Longleftrightarrow 4^\circ$) the assertion $1^\circ$ is equivalent to $0_X \notin \cl(S_{C} - S_K^0)$ or $0_X \notin \cl(S_{C}^0 - S_K)$. Using Theorem \ref{th:non-sym_sep_theorem_1} ($1^\circ \Longleftrightarrow 2^\circ$) and Theorem \ref{th:non-sym_sep_theorem_2} ($1^\circ \Longleftrightarrow 3^\circ$) the latter statement is equivalent to $3^\circ$. By Theorem \ref{th:non-sym_sep_theorem_1} ($2^\circ \Longleftrightarrow 4^\circ$) and Theorem \ref{th:non-sym_sep_theorem_2} ($3^\circ \Longleftrightarrow 5^\circ$) we obtain $3^\circ \Longleftrightarrow 4^\circ$. 

It remains to show that $1^\circ$--$4^\circ$ are also equivalent to the corresponding assertions where the cone $C$ is replaced by $\cl\,C$ while $K$ is replaced by 
$\cl\,K$. It is easy to check that each of these 8 assertions implies the conditions $\{0_X\} \subsetneq \cl\, C  \neq X$ and $\{0_X\} \subsetneq \cl\, K \neq X$. Now, taking into account  %{\color{red} Note that 
%$C$ is non-trivial if and only if $\cl\,C$ is non-trivial.} 
the equivalence of $1^\circ$ with $0_X \notin \cl(S_{\cl\,C} - S_{\cl\, K})
$ in view of Proposition \ref{prop:basic separation_BP_bd_1} ($1^\circ\Longleftrightarrow 2^\circ$), we get the desired equivalences (using the proven results for the non-trivial cones $C$ and $K$, as well as for the non-trivial cones $\cl\,C$ and $\cl\,K$).
%the assertions $1^\circ$--$4^\circ$ (applied for $C$ and $K$) and the assertions $1^\circ$--$4^\circ$ applied for $\cl\,C$ and $\cl\,K$ we get the result. 
\qed
\end{proof}

Under the weak compactness of $\cl\, S_{C}$ or $\cl\, S_K$, we can state some further characterizations; in particular, we can characterize the condition $\mathcal{S}(C, K \mid \mathcal{C}_{{\rm BP}^*_i}(C, K)) \neq \emptyset$ for $i \in \{2,3\}$.

\begin{theorem} \label{th:sym_sep_theorem_1b}
Consider the following assertions:
	\begin{itemize}
        \item[$1^\circ$] 
        $0_X \notin \cl(S_{C} - S_K)$.
        \item[$2^\circ$] 
		$(\cl\,S_{K}) \cap (\cl\, S_{C})  = \emptyset$. 
		\item[$3^\circ$] There exists $(x^*, \alpha) \in (C^{aw\#} \cup -K^{aw\#}) \cap (X^* \times (\mathbb{R} \setminus \{0\}))$ such that \eqref{separation_condition_inequalities} is valid (i.e., $\mathcal{S}(C, K \mid \mathcal{C}_{{\rm BP}^*_2}(C, K)) \neq \emptyset$).
        \item[$4^\circ$]  There exists $(x^*, \alpha) \in (C^{aw\#} \cup -K^{aw\#})$ such that  \eqref{separation_condition_inequalities} is valid (i.e., $\mathcal{S}(C, K \mid \mathcal{C}_{{\rm BP}^*_3}(C, K)) \neq \emptyset$).
		
	\end{itemize}	
    Then, $1^\circ \Longrightarrow 2^\circ$ and $1^\circ \Longrightarrow 3^\circ \Longrightarrow 4^\circ$. 
    If one of the sets $\cl\, S_{C}$ and $\cl\, S_K$ is weakly compact, then $2^\circ \Longrightarrow 1^\circ$.
    If both $\cl\, S_{C}$ and $\cl\, S_{K}$ are weakly compact, then $4^\circ \Longrightarrow 1^\circ$.
\end{theorem}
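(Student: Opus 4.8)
The plan is to reduce everything to the non-symmetric theorems (Theorems \ref{th:non-sym_sep_theorem_3} and \ref{th:non-sym_sep_theorem_4}), applied once with $C$ in the privileged role and once with $K$ in the privileged role, and then to glue the two cases together using the connecting Propositions \ref{prop:basic separation_BP_e} and \ref{prop:basic separation_BP_f}. The organizing observation is that, by Proposition \ref{prop:basic separation_BP_e} ($1^\circ \Longleftrightarrow 4^\circ$), assertion $1^\circ$ splits as
$$
0_X \notin \cl(S_{C} - S_K^0) \quad \text{or} \quad 0_X \notin \cl(S_{C}^0 - S_K),
$$
that is, as ``$1^\circ$ of Theorem \ref{th:non-sym_sep_theorem_3} or $1^\circ$ of Theorem \ref{th:non-sym_sep_theorem_4}''; moreover $2^\circ$ here is literally assertion $3^\circ$ of Proposition \ref{prop:basic separation_BP_f}, and $3^\circ$ (resp.\ $4^\circ$) here is the disjunction of assertion $3^\circ$ (resp.\ $4^\circ$) of Theorem \ref{th:non-sym_sep_theorem_3} with assertion $3^\circ$ (resp.\ $4^\circ$) of Theorem \ref{th:non-sym_sep_theorem_4}, once one notes that $C^{aw\#} \cap (X^* \times (\mathbb{R}\setminus\{0\})) = C^{aw\#} \cap (X^* \times \mathbb{P})$ and $(-K^{aw\#}) \cap (X^* \times (\mathbb{R}\setminus\{0\})) = (-K^{aw\#}) \cap (X^* \times -\mathbb{P})$.

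For the forward implications I would argue as follows. The implication $1^\circ \Longrightarrow 2^\circ$ is precisely Proposition \ref{prop:basic separation_BP_f} ($1^\circ \Longrightarrow 3^\circ$). For $1^\circ \Longrightarrow 3^\circ$ I would split along the dichotomy above: if $0_X \notin \cl(S_{C} - S_K^0)$, then Theorem \ref{th:non-sym_sep_theorem_3} ($1^\circ \Longrightarrow 3^\circ$) produces a pair $(x^*,\alpha) \in C^{aw\#} \cap (X^* \times \mathbb{P})$ satisfying \eqref{separation_condition_inequalities}; if instead $0_X \notin \cl(S_{C}^0 - S_K)$, then Theorem \ref{th:non-sym_sep_theorem_4} ($1^\circ \Longrightarrow 3^\circ$) produces a pair $(x^*,\alpha) \in (-K^{aw\#}) \cap (X^* \times -\mathbb{P})$ satisfying \eqref{separation_condition_inequalities}. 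Since $\alpha > 0$ in the first subcase and $\alpha < 0$ in the second, in either event the pair lies in $(C^{aw\#} \cup -K^{aw\#}) \cap (X^* \times (\mathbb{R}\setminus\{0\}))$, which is exactly $3^\circ$. The implication $3^\circ \Longrightarrow 4^\circ$ is the trivial inclusion of the witness sets.

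For the reverse implications under compactness, $2^\circ \Longrightarrow 1^\circ$ is again Proposition \ref{prop:basic separation_BP_f}, whose $3^\circ \Longrightarrow 1^\circ$ holds as soon as one of $\cl\,S_C$, $\cl\,S_K$ is weakly compact. For $4^\circ \Longrightarrow 1^\circ$ I would split according to whether a witnessing pair lies in $C^{aw\#}$ or in $-K^{aw\#}$. In the first case, Theorem \ref{th:non-sym_sep_theorem_3} ($4^\circ \Longrightarrow 1^\circ$), which requires $\cl\,S_C$ weakly compact, yields $0_X \notin \cl(S_{C} - S_K^0)$; in the second case, Theorem \ref{th:non-sym_sep_theorem_4} ($4^\circ \Longrightarrow 1^\circ$), which requires $\cl\,S_K$ weakly compact, yields $0_X \notin \cl(S_{C}^0 - S_K)$. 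In both subcases Proposition \ref{prop:basic separation_BP_e} ($4^\circ \Longrightarrow 1^\circ$) then upgrades the conclusion to $0_X \notin \cl(S_{C} - S_K)$, i.e.\ $1^\circ$.

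The only genuine obstacle lies in this last implication: a witness for $4^\circ$ may come either from $C^{aw\#}$ or from $-K^{aw\#}$, and there is no a priori way to decide which, so one must be ready to invoke \emph{either} Theorem \ref{th:non-sym_sep_theorem_3} \emph{or} Theorem \ref{th:non-sym_sep_theorem_4}, each of which demands weak compactness of its own slice set. This is exactly why $4^\circ \Longrightarrow 1^\circ$ needs \emph{both} $\cl\,S_C$ and $\cl\,S_K$ weakly compact, whereas for $2^\circ \Longrightarrow 1^\circ$ a single compactness hypothesis suffices (the hypothesis of Proposition \ref{prop:basic separation_BP_f} is an ``or''). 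Beyond this case analysis, the entire argument is bookkeeping: one need only track the sign of $\alpha$ — positive on the $C$-side, negative on the $K$-side — so that the two one-sided witnesses assemble into the single symmetric condition phrased over $X^* \times (\mathbb{R}\setminus\{0\})$.
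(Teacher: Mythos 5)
Your proposal is correct and follows essentially the same route as the paper's proof: Proposition \ref{prop:basic separation_BP_f} handles $1^\circ \Longleftrightarrow 2^\circ$ (one-sided compactness for the converse), Proposition \ref{prop:basic separation_BP_e} splits $1^\circ$ into the two non-symmetric conditions, and Theorems \ref{th:non-sym_sep_theorem_3} and \ref{th:non-sym_sep_theorem_4} (applied with $C$, respectively $K$, in the privileged role) give $1^\circ \Longrightarrow 3^\circ \Longrightarrow 4^\circ$ and, under both compactness assumptions, $4^\circ \Longrightarrow 1^\circ$. Your write-up merely makes explicit the sign bookkeeping on $\alpha$ and the witness case analysis that the paper compresses into a single sentence.
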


\begin{proof} 
By Proposition \ref{prop:basic separation_BP_f}, we have $1^\circ \Longrightarrow 2^\circ$, while by Proposition \ref{prop:basic separation_BP_e} we have $1^\circ \Longleftrightarrow [0_X \notin \cl(S_{C} - S_K^0)$ or $0_X \notin \cl(S_{C}^0 - S_K)]$. Applying Theorems \ref{th:non-sym_sep_theorem_3} and \ref{th:non-sym_sep_theorem_4} we obtain $1^\circ \Longrightarrow 3^\circ \Longrightarrow 4^\circ$. The remaining implications (under weak compactness assumptions) follow from Proposition \ref{prop:basic separation_BP_f} and Theorems \ref{th:non-sym_sep_theorem_3} and \ref{th:non-sym_sep_theorem_4}. \qed
\end{proof}

A symmetric counterpart to the interpolation result in Theorem \ref{th:Omega1and2_BP_cone_nonsym} (for the non-symmetric approach) is given in the next theorem.

\begin{theorem} \label{th:Omega1and2_BP_cone_sym} 
If $0_X \notin \cl(S_{C} - S_{K})$ (in particular, $K \cap C = \{0_X\}$), then there exists $C_{\rm BP} \in\mathcal{C}_{{\rm BP}^*_1}(C)$ such that
\begin{equation}
\label{eq:Omega1Omega2_cond4}
K \cap C_{\rm BP} = \{0_X\} \quad \text{ and } \quad C \setminus \{0_X\} \subseteq \intt\, C_{\rm BP} \quad \text{ and } \quad C \subseteq C_{\rm BP},
\end{equation}
or there exists $C_{\rm BP} \in\mathcal{C}_{{\rm BP}^*_1}(K)$ such that
\begin{equation}
\label{eq:Omega1Omega2_cond5}
C \cap C_{\rm BP} = \{0_X\} \quad \text{ and } \quad K \setminus \{0_X\} \subseteq \intt\, C_{\rm BP} \quad \text{ and } \quad K \subseteq C_{\rm BP}.
\end{equation}
\end{theorem}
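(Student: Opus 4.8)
The plan is to reduce this symmetric statement to the already-proven non-symmetric interpolation result in Theorem \ref{th:Omega1and2_BP_cone_nonsym} by first decomposing the symmetric hypothesis into a disjunction of two non-symmetric ones. The key observation is that Proposition \ref{prop:basic separation_BP_e} (specifically the equivalence $1^\circ \Longleftrightarrow 4^\circ$) tells us that the assumption $0_X \notin \cl(S_C - S_K)$ holds if and only if $0_X \notin \cl(S_C - S_K^0)$ or $0_X \notin \cl(S_C^0 - S_K)$. Thus it suffices to handle each of these two non-symmetric hypotheses separately and produce the corresponding alternative in the conclusion.

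First I would treat the case $0_X \notin \cl(S_C - S_K^0)$. This is exactly the hypothesis of Theorem \ref{th:Omega1and2_BP_cone_nonsym} ($1^\circ$), which directly yields a Bishop-Phelps cone $C_{\rm BP} \in \mathcal{C}_{{\rm BP}^*_1}(C)$ satisfying $K \cap C_{\rm BP} = \{0_X\}$, $C \setminus \{0_X\} \subseteq \intt\,C_{\rm BP}$, and $C \subseteq C_{\rm BP}$, that is, the first alternative \eqref{eq:Omega1Omega2_cond4}.

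Next I would treat the case $0_X \notin \cl(S_C^0 - S_K)$. Here I would note the elementary identity $S_C^0 - S_K = -(S_K - S_C^0)$, from which $0_X \notin \cl(S_C^0 - S_K)$ is equivalent to $0_X \notin \cl(S_K - S_C^0)$. The latter is precisely the hypothesis of Theorem \ref{th:Omega1and2_BP_cone_nonsym} ($1^\circ$) with the roles of $C$ and $K$ interchanged, and it then produces a cone $C_{\rm BP} \in \mathcal{C}_{{\rm BP}^*_1}(K)$ satisfying $C \cap C_{\rm BP} = \{0_X\}$, $K \setminus \{0_X\} \subseteq \intt\,C_{\rm BP}$, and $K \subseteq C_{\rm BP}$, that is, the second alternative \eqref{eq:Omega1Omega2_cond5}. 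Combining the two cases completes the argument.

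There is essentially no genuine obstacle here: the entire content is carried by Proposition \ref{prop:basic separation_BP_e} and Theorem \ref{th:Omega1and2_BP_cone_nonsym}, and what remains is the bookkeeping in swapping roles together with the sign symmetry $S_C^0 - S_K = -(S_K - S_C^0)$. The only point demanding a little care is to match each branch of the disjunction ($S_K^0$ versus $S_K$, respectively $S_C^0$ versus $S_C$) with the correct application of Theorem \ref{th:Omega1and2_BP_cone_nonsym}, so that the two conclusions align with \eqref{eq:Omega1Omega2_cond4} and \eqref{eq:Omega1Omega2_cond5}, respectively. The parenthetical remark ``in particular, $K \cap C = \{0_X\}$'' follows from either alternative, or directly from Proposition \ref{prop:basic separation_BP_f} ($1^\circ \Longrightarrow 6^\circ$).
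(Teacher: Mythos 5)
Your proposal is correct and follows essentially the same route as the paper's own proof: decompose the hypothesis via Proposition \ref{prop:basic separation_BP_e} ($1^\circ \Longleftrightarrow 4^\circ$), use the sign symmetry to rewrite $0_X \notin \cl(S_C^0 - S_K)$ as $0_X \notin \cl(S_K - S_C^0)$, and apply Theorem \ref{th:Omega1and2_BP_cone_nonsym} twice, once with the roles of $C$ and $K$ swapped. No gaps.
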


\begin{proof}
    By Proposition \ref{prop:basic separation_BP_e} the condition $0_X \notin \cl(S_{C} - S_{K})$ is equivalent to $0_X \notin \cl(S_{C} - S_{K}^0)$ or $0_X \notin \cl(S_{C}^0 - S_{K})$ (where the latter condition is equivalent to $0_X \notin \cl(S_{K} - S_{C}^0)$). Applying Theorem \ref{th:Omega1and2_BP_cone_nonsym} twice there exists $C_{\rm BP} \in \mathcal{C}_{{\rm BP}^*_1}(C)$ with \eqref{eq:Omega1Omega2_cond4} or there exists $C_{\rm BP} \in \mathcal{C}_{{\rm BP}^*_1}(K)$ with \eqref{eq:Omega1Omega2_cond5}. \qed
\end{proof}

\begin{remark}
We note that, taking into account Proposition \ref{prop:basic separation_BP_bd_1}, the conclusions of the previous theorem for the cones $C$ and $K$ also hold under the assumption of the separation property of their boundaries. More specifically, the condition $0_X \notin \cl(S_{C} - S_{K})$ can be replaced by the conditions $0_X \notin \cl(S_{\bd C} - S_{\bd K})$ and $K \cap C = \{0_X\}$.
\end{remark}

Under convexity assumptions concerning $C$ and $K$, we get the following symmetric cone separation result.

\begin{proposition} \label{prop:non-sym_sep_theorem_3}
Assume that $C$ and $K$ are convex. Consider the following assertions:
\begin{itemize}
    \item[$1^\circ$] $0_X \notin \cl(S_{C} - S_{K}) \cup (\cl\, S_{K}) \cup (\cl\,S_{C})$.
    \item[$2^\circ$] $0_X \notin \cl(S_{C} - S_K^0)\cup \cl(S_{C}^0 - S_K)$.
    \item[$3^\circ$] There exists $x^* \in X^* \setminus \{0_{X^*}\}$ such that \\$\sup_{k \in \cl\,S_{K}^0} x^*(k) = 0 < \inf_{c \in \cl\, S_{C}} x^*(c)$ and $\sup_{k \in \cl\,S_{K}} x^*(k) < 0 = \inf_{c \in \cl\, S_{C}^0} x^*(c)$.
    \item[$4^\circ$] There exists $x^* \in X^* \setminus \{0_{X^*}\}$ such that $\sup_{k \in \cl\,S_{K}} x^*(k) < 0 < \inf_{c \in \cl\, S_{C}} x^*(c)$.
    \item[$5^\circ$]
    $0_X \notin (\cl\, S_{K}) \cup (\cl\,S_{C})$ and there exists $x^* \in X^* \setminus \{0_{X^*}\}$ such that 
    \begin{equation*}
        x^*(k) > 0 > x^*(c) \quad \mbox{for all } k \in (\cl\,K)\setminus \{0_X\} \mbox{ and  }c \in (\cl\,C)\setminus \{0_X\}.
    \end{equation*}
    \item[$6^\circ$] $(\cl\,S_{K}^0) \cap (\cl\, S_{C})  = \emptyset$ and $(\cl\,S_{K}) \cap (\cl\, S_{C}^0)  = \emptyset$.
    \item[$7^\circ$] $(\cl\,S_{K}) \cap (\cl\, S_{C}) = \emptyset$
     and $0_X \notin (\cl\, S_{K}) \cup (\cl\,S_{C})$.
     \item[$8^\circ$] $(\cl\,K) \cap (\cl\, S_{C}) =  (\cl\,C) \cap (\cl\, S_{K}) = \emptyset$.
    \item[$9^\circ$] $(\cl\,K) \cap (\cl\,C) = \{0_X\}$ and $0_X \notin (\cl\, S_{K}) \cup (\cl\,S_{C})$.
\end{itemize}	
Then,  $4^\circ \Longleftrightarrow 3^\circ \Longrightarrow 2^\circ \Longrightarrow 1^\circ \Longrightarrow 7^\circ \Longleftrightarrow 6^\circ \Longleftrightarrow 8^\circ \Longleftrightarrow 9^\circ$,  as well as $4^\circ \Longrightarrow 5^\circ \Longrightarrow 9^\circ$. Moreover, if both of the sets $\cl\, S_{C}$ and $\cl\, S_K$ are weakly compact, then $6^\circ \Longrightarrow 2^\circ \Longrightarrow 4^\circ$ (i.e., assertions $1^\circ-9^\circ$ are equivalent).  
\end{proposition}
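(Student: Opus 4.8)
The plan is to treat the long list of assertions as three layers: the analytic separation conditions $1^\circ$--$4^\circ$, the intermediate linear-separation condition $5^\circ$, and the purely geometric intersection conditions $6^\circ$--$9^\circ$, reducing each link to the linear separation machinery of Proposition \ref{prop:basic separation} together with the bridging Propositions \ref{prop:basic separation_BP_a}, \ref{prop:basic separation_BP_e} and \ref{prop:basic separation_BP_b}. First I would prove the implications that need no compactness: the ``easy'' chain $4^\circ \Leftrightarrow 3^\circ \Rightarrow 2^\circ \Rightarrow 1^\circ \Rightarrow 7^\circ$, the block of geometric equivalences $7^\circ \Leftrightarrow 6^\circ \Leftrightarrow 8^\circ \Leftrightarrow 9^\circ$, and the side chain $4^\circ \Rightarrow 5^\circ \Rightarrow 9^\circ$. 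Only at the very end, assuming both $\cl\,S_{C}$ and $\cl\,S_{K}$ are weakly compact, would I close the loop through $6^\circ \Rightarrow 2^\circ \Rightarrow 4^\circ$.

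For the analytic links I would exploit that $0_X\in (\cl\,S_K^0)\cap (\cl\,S_C^0)$ and the identities $\sup_{\cl\,S_K^0}x^*=\max\{0,\sup_{\cl\,S_K}x^*\}$ and $\inf_{\cl\,S_C^0}x^*=\min\{0,\inf_{\cl\,S_C}x^*\}$. The equivalence $3^\circ\Leftrightarrow 4^\circ$ is then immediate: a functional with $\sup_{\cl\,S_K}x^*<0<\inf_{\cl\,S_C}x^*$ forces the two $S^0$-extrema to equal $0$, and conversely the two equalities in $3^\circ$ give exactly the strict two-sided estimate of $4^\circ$. For $3^\circ\Rightarrow 2^\circ\Rightarrow 1^\circ$ I would read each strict inequality through Proposition \ref{prop:basic separation} (with Remark \ref{rmk: equivalent_condition} to pass to closures), and use $0_X\in \cl\,S_K^0$ and $0_X\in\cl\,S_C^0$ to deduce $0_X\notin\cl\,S_C$ and $0_X\notin\cl\,S_K$; the disjointness $(\cl\,S_K)\cap(\cl\,S_C)=\emptyset$ needed in $1^\circ\Rightarrow 7^\circ$ comes from $0_X\notin\cl(S_C-S_K)$ again via Proposition \ref{prop:basic separation}.

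The geometric block I would handle with Proposition \ref{prop:basic separation_BP_a} (whose items $3^\circ$--$5^\circ$ are equivalent since $C$ and $K$ are convex): this yields $(\cl\,S_K^0)\cap(\cl\,S_C)=\emptyset\Leftrightarrow(\cl\,K)\cap(\cl\,S_C)=\emptyset\Leftrightarrow[(\cl\,K)\cap(\cl\,C)=\{0_X\}$ and $0_X\notin\cl\,S_C]$, together with the statement obtained by interchanging $C$ and $K$; combining the two gives $6^\circ\Leftrightarrow 8^\circ\Leftrightarrow 9^\circ$. For $7^\circ\Leftrightarrow 6^\circ$ the forward direction is trivial ($\cl\,S_K\subseteq\cl\,S_K^0$), while the reverse uses $\cl\,S_K^0\subseteq\cl\,K$ (since $S_K^0\subseteq K$) and the normalization $z\mapsto z/\|z\|$, which sends a hypothetical nonzero point of $(\cl\,S_K^0)\cap(\cl\,S_C)$ into $(\cl\,S_K)\cap(\cl\,S_C)$, contradicting $7^\circ$. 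Finally $4^\circ\Rightarrow 5^\circ$ follows by replacing $x^*$ with $-x^*$ and evaluating at $k/\|k\|\in\cl\,S_K$ and $c/\|c\|\in\cl\,S_C$, and $5^\circ\Rightarrow 9^\circ$ is a one-line sign contradiction.

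The main obstacle is the final step $2^\circ\Rightarrow 4^\circ$ under weak compactness, namely producing a \emph{single} functional that places $0$ strictly between the two bases. Here $6^\circ\Rightarrow 2^\circ$ is routine: each disjointness in $6^\circ$ separates a closed convex set from a weakly compact closed convex one, so Proposition \ref{prop:basic separation} ($3^\circ\Rightarrow 2^\circ$) with Remark \ref{rmk: equivalent_condition} delivers the two non-separation statements of $2^\circ$. For $2^\circ\Rightarrow 4^\circ$ I would apply Proposition \ref{prop:basic separation_BP_b} twice (once as stated, once with $C$ and $K$ interchanged, both legitimate since $C,K$ are convex and $\cl\,S_C,\cl\,S_K$ are weakly compact) to obtain $x_1^*$ with $\sup_{\cl\,S_K^0}x_1^*=0<\inf_{\cl\,S_C}x_1^*=:\gamma$ and $x_2^*$ with $\sup_{\cl\,S_C^0}x_2^*=0<\inf_{\cl\,S_K}x_2^*=:\delta$. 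Thus $x_1^*\geq\gamma>0$ on $\cl\,S_C$ and $x_1^*\leq 0$ on $\cl\,S_K$, whereas $x_2^*\geq\delta>0$ on $\cl\,S_K$ and $x_2^*\leq 0$ on $\cl\,S_C$, so $x^*:=x_1^*-x_2^*$ satisfies $\inf_{\cl\,S_C}x^*\geq\gamma>0$ and $\sup_{\cl\,S_K}x^*\leq-\delta<0$, which is precisely $4^\circ$. This difference-of-functionals construction is the crux of the proof, since neither separating functional by itself controls both sides at once.
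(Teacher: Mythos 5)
Your proposal is correct and follows essentially the same route as the paper: the same reduction to Propositions \ref{prop:basic separation}, \ref{prop:basic separation_BP_a}, \ref{prop:basic separation_BP_e}, \ref{prop:basic separation_BP_f} and \ref{prop:basic separation_BP_b}, and, crucially, the same functional-combination trick for $2^\circ \Longrightarrow 4^\circ$ under weak compactness (your $x^* := x_1^* - x_2^*$ is exactly the paper's $x^* + y^*$ with $y^* = -x_2^*$). The only cosmetic deviation is that you close the geometric block via $7^\circ \Longleftrightarrow 6^\circ$ by a direct normalization argument, whereas the paper proves $7^\circ \Longleftrightarrow 8^\circ$ using the same convexity/normalization idea borrowed from the proof of Proposition \ref{prop:basic separation_BP_f}.
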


\begin{proof} The implication $2^\circ  \Longrightarrow 1^\circ$ is obvious.
Moreover, Proposition \ref{prop:basic separation_BP_f} shows $1^\circ \Longrightarrow 7^\circ$ while Proposition \ref{prop:basic separation_BP_b} provides $3^\circ \Longrightarrow 2^\circ$. Taking into account that $\mathbb{P} \cdot \cl\,S_{C} = \cl(\conv\,C) \setminus \{0_X\} = (\cl\,C) \setminus \{0_X\}$ and $\mathbb{P} \cdot \cl\,S_K = \cl(\conv\,K) \setminus \{0_X\} = (\cl\,K) \setminus \{0_X\}$  (by Lemma \ref{lem:clconvCandclSC}), we easily infer $4^\circ \Longrightarrow 5^\circ$ (where the strict inequalities in $4^\circ$ ensure $0_X \notin (\cl\, S_{K}) \cup (\cl\,S_{C})$). The implication $5^\circ \Longrightarrow 9^\circ$ is obvious. The equivalence of $3^\circ$ and $4^\circ$ is easy to check (note that $\sup_{k \in \cl\,S_{K}} x^*(k) \leq \sup_{k \in \cl\,S_{K}^0} x^*(k)$ and $\inf_{c \in \cl\, S_{C}^0} x^*(c) \leq \inf_{c \in \cl\, S_{C}} x^*(c)$). 
From Proposition \ref{prop:basic separation_BP_a} we get $6^\circ  \Longleftrightarrow 8^\circ \Longleftrightarrow 9^\circ$. 

Let us show the equivalence $7^\circ \Longleftrightarrow 8^\circ$. Of course, since $C$ and $K$ are convex cones, we have $(\cl\,S_K) \cup \{0_X\} \subseteq \cl\,K$ and $(\cl\,S_C) \cup \{0_X\} \subseteq \cl\,C$, hence $8^\circ \Longrightarrow 7^\circ$. The proof of the implication $7^\circ \Longrightarrow 8^\circ$ uses the same arguments (especially the convexity of the cones) as the proof of Proposition \ref{prop:basic separation_BP_f} ($3^\circ \Longrightarrow 4^\circ$).

For the last part of the proof, assume weak compactness of the sets $\cl\, S_{C}$ and $\cl\, S_K$. The implication $6^\circ \Longrightarrow 2^\circ$ is a consequence of 
Proposition \ref{prop:basic separation_BP_a}.
It remains to prove the implication $2^\circ \Longrightarrow 4^\circ$. Suppose that $2^\circ$ is valid. By Proposition \ref{prop:basic separation_BP_b} there exist $x^*, y^* \in X^* \setminus \{0_{X^*}\}$ such that $\sup_{k \in \cl\,S_{K}^0} x^*(k) = 0 < \inf_{c \in \cl\, S_{C}} x^*(c)$ and $\sup_{k \in \cl\,S_{K}} y^*(k) < 0 = \inf_{c \in \cl\, S_{C}^0} y^*(c)$.
Of course, then we have
$$\sup_{k \in \cl\,S_{K}} (x^*+y^*)(k) \leq \sup_{k \in \cl\,S_{K}^0} x^*(k) + \sup_{k \in \cl\,S_{K}} y^*(k) < 0$$
and
$$
0 < \inf_{c \in \cl\, S_{C}} x^*(c) + \inf_{c \in \cl\, S_{C}^0} y^*(c) \leq \inf_{c \in \cl\, S_{C}} (x^*+y^*)(c). 
$$
Thus, $4^\circ$ is true.
\qed
\end{proof}

\begin{remark}
The result in Proposition \ref{prop:non-sym_sep_theorem_3} extends the cone separation result (for non-trivial closed convex cones) mentioned in Proposition \ref{prop:linear_cone_separation}.
\end{remark}

Moreover, in the convex case, we are able to give  characterizations for
$$
\mathcal{N}(C, K \mid \mathcal{C}_{{\rm BP}^*_i}(C))) \neq \emptyset  \neq \mathcal{N}(K, C \mid \mathcal{C}_{{\rm BP}^*_i}(K))) \quad (i \in \{1,2,3\}),
$$
which also ensure
$$
\mathcal{S}(C, K \mid \mathcal{C}_{{\rm BP}^*_i}(C, K))) = \mathcal{N}(C, K \mid \mathcal{C}_{{\rm BP}^*_i}(C)))  \cup \mathcal{N}(K, C \mid \mathcal{C}_{{\rm BP}^*_i}(K))) \neq \emptyset. 
$$
\begin{corollary} \label{cor:non-sym_sep_theorem_3}
Assume that $C$ and $K$ are closed and convex. Consider the following assertions:
\begin{itemize}
    \item[$1^\circ$] $0_X \notin \cl(S_{C} - S_{K}) \cup (\cl\, S_{K}) \cup (\cl\,S_{C})$.
    \item[$2^\circ$] There exists $(x^*, \alpha) \in \cor\,C^{a+}$ such that \eqref{separation_condition_inequalities} is valid (i.e., $\mathcal{N}(C, K \mid \mathcal{C}_{{\rm BP}^*_1}(C)) \neq \emptyset$), and there exists $(x^*, \alpha) \in -\cor\,K^{a+}$ such that \eqref{separation_condition_inequalities} is valid (i.e., $\mathcal{N}(K, C \mid \mathcal{C}_{{\rm BP}^*_1}(K)) \neq \emptyset$).
    \item[$3^\circ$] There exists $(x^*, \alpha) \in C^{aw\#} \cap (X^* \times \mathbb{P})$ such that \eqref{separation_condition_inequalities} is valid (i.e., $\mathcal{N}(C, K \mid \mathcal{C}_{{\rm BP}^*_2}(C)) \neq \emptyset$), and there exists $(x^*, \alpha) \in -K^{aw\#} \cap (X^* \times -\mathbb{P})$ such that \eqref{separation_condition_inequalities} is valid (i.e., $\mathcal{N}(K, C \mid \mathcal{C}_{{\rm BP}^*_2}(K)) \neq \emptyset$).
    \item[$4^\circ$]  There exists $(x^*, \alpha) \in C^{aw\#}$ such that  \eqref{separation_condition_inequalities} is valid (i.e., $\mathcal{N}(C, K \mid \mathcal{C}_{{\rm BP}^*_3}(C)) \neq \emptyset$), and there exists $(x^*, \alpha) \in -K^{aw\#}$ such that  \eqref{separation_condition_inequalities} is valid (i.e., $\mathcal{N}(K, C \mid \mathcal{C}_{{\rm BP}^*_3}(K)) \neq \emptyset$).
\end{itemize}	
Then, $1^\circ \Longleftrightarrow 2^\circ \Longrightarrow 3^\circ \Longrightarrow 4^\circ$, and if the sets $\cl\, S_{C}$ and $\cl\, S_K$ are weakly compact, then $4^\circ \Longrightarrow 1^\circ$.
\end{corollary}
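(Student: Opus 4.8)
The plan is to reduce the four assertions to conditions already characterized and then invoke Proposition~\ref{prop:non-sym_sep_theorem_3} together with Theorems~\ref{th:non-sym_sep_theorem_1}--\ref{th:non-sym_sep_theorem_4}. First I would note that assertion $1^\circ$ here coincides verbatim with assertion $1^\circ$ of Proposition~\ref{prop:non-sym_sep_theorem_3}. Next, applying Theorem~\ref{th:non-sym_sep_theorem_1} ($1^\circ\Leftrightarrow 4^\circ$) to the first existence statement in $2^\circ$ and Theorem~\ref{th:non-sym_sep_theorem_2} ($1^\circ\Leftrightarrow 5^\circ$) to the second, assertion $2^\circ$ is seen to be equivalent to the conjunction $0_X\notin\cl(S_C-S_K^0)$ and $0_X\notin\cl(S_C^0-S_K)$, that is, to $0_X\notin\cl(S_C-S_K^0)\cup\cl(S_C^0-S_K)$, which is precisely assertion $2^\circ$ of Proposition~\ref{prop:non-sym_sep_theorem_3}. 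Thus the equivalence $1^\circ\Leftrightarrow 2^\circ$ is exactly the equivalence of assertions $1^\circ$ and $2^\circ$ in that proposition.

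For the remaining implications I would argue directly. The direction $2^\circ\Rightarrow 1^\circ$ is immediate from Proposition~\ref{prop:non-sym_sep_theorem_3} ($2^\circ\Rightarrow 1^\circ$). For $2^\circ\Rightarrow 3^\circ$, rewriting $2^\circ$ as the two difference conditions above, Theorem~\ref{th:non-sym_sep_theorem_3} ($1^\circ\Rightarrow 3^\circ$) applied to $0_X\notin\cl(S_C-S_K^0)$ produces $(x^*,\alpha)\in C^{aw\#}\cap(X^*\times\mathbb{P})$ satisfying \eqref{separation_condition_inequalities}, while Theorem~\ref{th:non-sym_sep_theorem_4} ($1^\circ\Rightarrow 3^\circ$) applied to $0_X\notin\cl(S_C^0-S_K)$ produces $(x^*,\alpha)\in(-K^{aw\#})\cap(X^*\times-\mathbb{P})$ satisfying \eqref{separation_condition_inequalities}; these are the two statements of $3^\circ$. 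The implication $3^\circ\Rightarrow 4^\circ$ follows at once from the inclusions $C^{aw\#}\cap(X^*\times\mathbb{P})\subseteq C^{aw\#}$ and $(-K^{aw\#})\cap(X^*\times-\mathbb{P})\subseteq -K^{aw\#}$. Finally, assuming weak compactness of both $\cl S_C$ and $\cl S_K$, Theorem~\ref{th:non-sym_sep_theorem_3} ($4^\circ\Rightarrow 1^\circ$) turns the first half of $4^\circ$ into $0_X\notin\cl(S_C-S_K^0)$ and Theorem~\ref{th:non-sym_sep_theorem_4} ($4^\circ\Rightarrow 1^\circ$) turns the second half into $0_X\notin\cl(S_C^0-S_K)$; together these yield assertion $2^\circ$ of Proposition~\ref{prop:non-sym_sep_theorem_3}, hence $1^\circ$, giving $4^\circ\Rightarrow 1^\circ$.

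The substantive step, and the one I expect to be the main obstacle, is the forward direction $1^\circ\Rightarrow 2^\circ$ (equivalently, Proposition~\ref{prop:non-sym_sep_theorem_3} $1^\circ\Rightarrow 2^\circ$). From $1^\circ$ one obtains immediately, via Proposition~\ref{prop:basic separation_BP_e} ($1^\circ\Leftrightarrow 4^\circ$), only the \emph{disjunction} $0_X\notin\cl(S_C-S_K^0)$ or $0_X\notin\cl(S_C^0-S_K)$, whereas $2^\circ$ demands the \emph{conjunction}; moreover, Proposition~\ref{prop:non-sym_sep_theorem_3} reaches from $1^\circ$ only the disjointness conditions $(\cl S_K^0)\cap(\cl S_C)=\emptyset$ and $(\cl S_K)\cap(\cl S_C^0)=\emptyset$, so the real content is to upgrade disjointness of these bounded closed convex subsets of $\mathbb{B}_X$ to strictly positive distance. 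The plan is to exploit closedness and convexity of $C,K$ together with $0_X\notin\cl S_C$ and $0_X\notin\cl S_K$: starting from the functional separating $\cl S_C$ and $\cl S_K$ given by Proposition~\ref{prop:basic separation_BP_e} ($1^\circ\Leftrightarrow 2^\circ$) and the two origin-separating functionals, one tries to assemble, for each direction, a single $x^*$ with $\inf_{c\in\cl S_C}x^*(c)>\max\{0,\sup_{k\in\cl S_K}x^*(k)\}$ (and the symmetric inequality), which is equivalent to the corresponding difference condition. The difficulty is that the norm-bases need not be weakly compact, so such a functional cannot be produced by a mere compactness/limit argument; this disjoint-to-distance upgrade is exactly the point at which weak compactness enters (as in Proposition~\ref{prop:non-sym_sep_theorem_3}, $6^\circ\Rightarrow 2^\circ$), and it is therefore the crux on which the whole equivalence $1^\circ\Leftrightarrow 2^\circ$ rests.
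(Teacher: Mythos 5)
Your treatment of everything except one implication is correct and coincides with the paper's own route: identifying the corollary's $2^\circ$ with the conjunction $0_X \notin \cl(S_C - S_K^0)$ and $0_X \notin \cl(S_C^0 - S_K)$ via Theorem \ref{th:non-sym_sep_theorem_1} ($1^\circ \Leftrightarrow 4^\circ$) and Theorem \ref{th:non-sym_sep_theorem_2} ($1^\circ \Leftrightarrow 5^\circ$), deducing $2^\circ \Rightarrow 1^\circ$ from Proposition \ref{prop:non-sym_sep_theorem_3}, $2^\circ \Rightarrow 3^\circ \Rightarrow 4^\circ$ from Theorems \ref{th:non-sym_sep_theorem_3} and \ref{th:non-sym_sep_theorem_4}, and $4^\circ \Rightarrow 1^\circ$ under weak compactness from the same two theorems. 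The genuine gap is the one you flagged yourself: you never prove $1^\circ \Rightarrow 2^\circ$, and you end by asserting that this step rests on weak compactness. That diagnosis is wrong, and it is not a harmless omission: the corollary claims $1^\circ \Leftrightarrow 2^\circ$ with no compactness hypothesis, so your proposal, taken literally, proves a strictly weaker statement and even casts doubt on the stated result. You are right that Proposition \ref{prop:non-sym_sep_theorem_3} by itself only yields $2^\circ \Rightarrow 1^\circ$ unconditionally (its converse is stated there only under weak compactness), so the paper's one-line appeal to that proposition is loose on exactly this point; but the equivalence is true, and the missing ingredient is not weak compactness.

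The missing ingredient is the closedness of $C$ and $K$ --- the only hypothesis distinguishing the corollary from Proposition \ref{prop:non-sym_sep_theorem_3}, and one your argument never uses. For a closed convex cone $K$ one has $S_K^0 = \conv(\{0_X\} \cup B_K) = K \cap \mathbb{B}_X$, and in general $S_C^0 = [0,1]\cdot S_C$. Now assume $0_X \notin \cl(S_C - S_K^0)$ and $0_X \notin \cl\,S_K$, and suppose $t_n c_n - k_n \to 0_X$ with $t_n \in [0,1]$, $c_n \in S_C$, $k_n \in S_K$. From $\|k_n\| \geq \delta := d(\{0_X\}, S_K) > 0$ and $\|c_n\| \leq 1$ one gets $t_n \geq \delta/2$ eventually, hence $c_n - k_n/t_n \to 0_X$ with $k_n/t_n \in K$ and $\|k_n/t_n\| \leq 1 + o(1)$; dividing $k_n/t_n$ by $\max(1, \|k_n/t_n\|) \to 1$ produces points of $K \cap \mathbb{B}_X = S_K^0$ whose distance to $c_n$ tends to $0$, contradicting $0_X \notin \cl(S_C - S_K^0)$. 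Hence $0_X \notin \cl(S_C^0 - S_K)$, and the symmetric argument (using that $C$ is closed and convex with $0_X \notin \cl\,S_C$) gives the reverse implication. Consequently, under $1^\circ$ the two conditions $0_X \notin \cl(S_C - S_K^0)$ and $0_X \notin \cl(S_C^0 - S_K)$ imply one another, so the disjunction supplied by Proposition \ref{prop:basic separation_BP_e} ($1^\circ \Leftrightarrow 4^\circ$) upgrades automatically to the conjunction, i.e. $1^\circ \Rightarrow 2^\circ$, with no weak compactness at all. Your alternative plan --- assembling a single functional $x^*$ with $\inf_{c \in \cl\,S_C} x^*(c) > \max\{0, \sup_{k \in \cl\,S_K} x^*(k)\}$ directly from disjointness data --- is precisely the step that cannot be carried out without weak compactness, which is why the metric (rescaling) argument, not a functional one, is what fills the hole.
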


\begin{proof} By Proposition \ref{prop:non-sym_sep_theorem_3}, assertion $1^\circ$ is equivalent to $0_X \notin \cl(S_{C} - S_K^0) \cup \cl(S_{C}^0 - S_K)$.
Then, the result follows from Theorem \ref{th:non-sym_sep_theorem_1} ($1^\circ \Longleftrightarrow 4^\circ$), Theorem \ref{th:non-sym_sep_theorem_2} ($1^\circ \Longleftrightarrow 5^\circ$) and Theorems \ref{th:non-sym_sep_theorem_3} and \ref{th:non-sym_sep_theorem_4}. \qed
\end{proof}

Let us conclude this section with an example to show the similarities / differences of the non-symmetric and symmetric cone separation approaches. 

\begin{example} \label{ex:cone_separation}

Figure \ref{fig:cone_separation} (respectively, Figure \ref{fig:cone_separation2}) visualizes the non-symmetric (respectively, symmetric) cone separation approach for an example in the real normed space $(\mathbb{R}^2, ||\cdot||_2)$, where $||\cdot||_2$ denotes the Euclidean norm.

\begin{center}
\begin{figure}[h!]
\centering
\resizebox{1\hsize}{!}{\input{separation_cones_nonsym.TpX}}
%\resizebox{1\hsize}{!}{\includegraphics[]{separation_cones_nonsym.png}}
\caption{Non-symmetric cone separation of two non-trivial, closed, pointed, solid cones $C$ and $K$ that satisfy $C \cap K = \{0_X\}$, ${\rm cl}\,S_{K}^0 = S_{K}^0$ and $0_X \notin {\rm cl}\, S_{C} = S_{C}$ in the real normed space $(\mathbb{R}^2, ||\cdot||_2)$: \\
(left image) $C$ is convex, $K$ is nonconvex, $(\cl\,S_{K}^0) \cap (\cl\, S_{C}) = \emptyset$ ($\Longleftrightarrow (\cl\,S_{\bd\,K}^0) \cap (\cl\, S_{\bd\,C}) = \emptyset$), $\mathcal{N}(C, K \mid \mathcal{C}_{{\rm BP}}) \neq \emptyset = \mathcal{N}(K, C \mid \mathcal{C}_{{\rm BP}})$;\\
(right image) $C$ is nonconvex, $K$ is convex, $(\cl\,S_{K}^0) \cap (\cl\, S_{C}) \neq \emptyset$ ($\Longleftrightarrow (\cl\,S_{\bd\,K}^0) \cap (\cl\, S_{\bd\,C}) \neq \emptyset$), $\mathcal{N}(C, K \mid \mathcal{C}_{{\rm BP}}) = \emptyset \neq \mathcal{N}(K, C \mid \mathcal{C}_{{\rm BP}})$.}
\label{fig:cone_separation}
\end{figure}
\begin{figure}[h!]
\centering
\resizebox{1\hsize}{!}{\input{separation_cones_sym.TpX}}
%\resizebox{1\hsize}{!}{\includegraphics[]{separation_cones_sym.png}}
\caption{Symmetric cone separation of two non-trivial, closed, pointed, solid cones $C$ and $K$ that satisfy $C \cap K = \{0_X\}$, $0_X \notin {\rm cl}\,S_{C} = S_{C}$ and $0_X \notin {\rm cl}\, S_{K} = S_{K}$ in the real normed space $(\mathbb{R}^2, ||\cdot||_2)$: \\
(left image) $C$ is convex, $K$ is nonconvex, $(\cl\,S_{C}) \cap (\cl\, S_{K}) = \emptyset$ ($\Longleftrightarrow (\cl\,S_{\bd\,C}) \cap (\cl\, S_{\bd\,K}) = \emptyset$), $\mathcal{S}(C, K \mid \mathcal{C}_{{\rm BP}}) \neq \emptyset$;\\
(right image) $C$ is nonconvex, $K$ is convex, $(\cl\,S_{C}) \cap (\cl\, S_{K}) = \emptyset$ ($\Longleftrightarrow (\cl\,S_{\bd\,C}) \cap (\cl\, S_{\bd\,K}) = \emptyset$), $\mathcal{S}(C, K \mid \mathcal{C}_{{\rm BP}}) \neq \emptyset$.}
\label{fig:cone_separation2}
\end{figure}
\end{center}
\end{example}

%=========================================================
\section{Existence of (Bounded) Convex Bases for Convex Cones} \label{sec: existence_bases}
%=========================================================

In this section, we present some existence results for (bounded) bases of convex cones in the real normed space $(X, ||\cdot||)$. Assertions concerning the existence of a bounded base of convex cones play an important role in order-theoretical investigations, in functional analysis and optimization. Consider two non-trivial cones $C, D \subseteq X$ such that $\cl(\conv\, C) = \cl(\conv\,D)$.

\begin{remark} \label{rem:well_based_alternative}
    For any $D \in \{C, \cl\,C, \conv\,C, \cl(\conv\,C)\}$, we have $\cl(\conv\, C) = \cl(\conv\,D)$, and if $C$ is convex and acute, then the latter equality is valid for $D = \bd\,C$ (by Kasimbeyli \cite[Lem. 3.10]{Kasimbeyli2010}). 
\end{remark}

In the following theorem, we characterize the well-basedness of a non-trivial convex cone $C$ in terms of existence results of a non-trivial cone $K \subseteq X$ satisfying separation conditions involving the convex sets $S_{D}$ and $S_{\bd \, K}$ (respectively, $S_{\bd \, K}^0$ or $S_{K}^0$).

\begin{theorem}\label{th:well_based_alternative} Let $C$ and $D$ be two non-trivial cones in $X$ such that $\cl(\conv C)=\cl(\conv D)$. Then the following assertions are equivalent:
\begin{itemize}
\item[$1^\circ$] $\conv\, C$ (equivalently, $\cl(\conv\, C)$) is well-based. 
\item[$2^\circ$] There exists a functional $x^*\in X^*$ such that
\begin{equation}\label{eq1}
0 < \inf_{x \in S_{D}}\,x^*(x).
\end{equation}
\item[$3^\circ$] There exists a non-trivial cone $K\subseteq X$ such that $C\subseteq K$ and a functional $x^*\in X^*$ satisfying
\begin{equation*}\label{eq2}
0 < \sup_{k \in S_{\bd \, K}}\,x^*(k) \; \leq \; \inf_{x \in S_{D}}\,x^*(x).
\end{equation*}
\item[$4^\circ$] There exists a non-trivial cone $K\subseteq X$ such that $C\subseteq K$ and a functional $x^*\in X^*$ satisfying
\begin{equation}\label{eq3}
0 < \sup_{k \in S_{\bd \, K}}\,x^*(k) \; < \; \inf_{x \in S_{D}}\,x^*(x).
\end{equation}
\item[$5^\circ$] There exists a non-trivial cone $K\subseteq X$ with $C\subseteq K$ such that
\begin{equation}\label{eq4}
0_X\not \in \cl(S_D-S_{\bd\, K}^{0}).
\end{equation}
\item[$6^\circ$] There exists a non-trivial cone $K'\subseteq X$ such that
\begin{equation}\label{eq5}
0_X\not \in \cl(S_D-S_{K'}^{0}).
\end{equation}
\end{itemize}
Moreover, the value $\sup_{k \in S_{\bd \, K}}\,x^*(k)$ given in assertions $3^\circ$ and $4^\circ$ can be replaced by the value $\sup_{k \in S_{\bd \, K}^0}\,x^*(k)$, while the non-trivial cone $K$ given in assertions $3^\circ-5^\circ$ can be assumed to be solid, pointed, closed and convex.
If $\conv\, C = \conv\,D$, then the condition $C\subseteq K$ in assertions $4^\circ$ and $5^\circ$ can be replaced by the condition $C\setminus \{0_X\} \subseteq \intt\,K$ (i.e., $K$ is a dilating cone for $C$ (and for $\conv\, C$) if $K$ is convex).
\end{theorem}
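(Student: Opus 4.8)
The plan is to route every assertion through the single scalar condition $0_X \notin \cl\,S_D$. By Remark~\ref{rem:clconvKwell-based} (the chain \eqref{eq:KwellbasedIntK+neqEmpty}), $0_X \notin \cl\,S_D$ is equivalent to well-basedness of $\conv\,D$, hence of $\cl(\conv\,D)=\cl(\conv\,C)$, hence of $\conv\,C$; this will give $1^\circ \Longleftrightarrow 2^\circ$ once I observe that $2^\circ$ is itself equivalent to $0_X\notin\cl\,S_D$. That equivalence is immediate from Proposition~\ref{prop:basic separation} applied to $\Omega^1:=\{0_X\}$ and $\Omega^2:=S_D$: since $\inf_{x\in S_D}x^*(x)=\inf_{x\in\cl\,S_D}x^*(x)$ by continuity, condition $2^\circ$ says exactly $\sup_{\{0_X\}}x^*<\inf_{S_D}x^*$, which is $2^\circ\Longleftrightarrow 1^\circ$ of that proposition.

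Next I would dispatch all implications \emph{toward} $2^\circ$, none of which need the inclusion $C\subseteq K$. For $3^\circ\Longrightarrow 2^\circ$ note that $3^\circ$ literally contains $0<\inf_{S_D}x^*$, and $4^\circ\Longrightarrow 3^\circ$ is just ``$<$ implies $\leq$''. For $5^\circ\Longrightarrow 2^\circ$ and $6^\circ\Longrightarrow 2^\circ$, apply Proposition~\ref{prop:basic separation} ($1^\circ\Longrightarrow 2^\circ$): from $0_X\notin\cl(S_D-S_{\bd K}^0)$ (resp. $0_X\notin\cl(S_D-S_{K'}^0)$) I obtain $x^*$ with $\sup_{S_{\bd K}^0}x^*<\inf_{S_D}x^*$, and since $0_X\in S_{\bd K}^0$ forces $\sup_{S_{\bd K}^0}x^*\geq 0$, this yields $0<\inf_{S_D}x^*$. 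Conversely $2^\circ\Longrightarrow 6^\circ$ is cheap: pick $v$ with $x^*(v)<0$ and set $K':=\RR_+ v$; then $S_{K'}^0\subseteq\{x\mid x^*(x)\leq 0\}$, so $\sup_{S_{K'}^0}x^*=0<\inf_{S_D}x^*$ and Proposition~\ref{prop:basic separation} gives \eqref{eq5}. This closes the loop except for producing the cone $K$ in $4^\circ$ and $5^\circ$.

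The heart of the proof is $2^\circ\Longrightarrow 4^\circ$, where I must build a suitable $K\supseteq C$, and I would take a Bishop--Phelps cone. Fix $x^*$ with $\mu:=\inf_{S_D}x^*>0$ and choose $\beta\in(0,\mu)$. The key estimate is $\inf_{c\in B_C}x^*(c)\geq\mu$: since $0_X\notin\cl\,S_D$, Lemma~\ref{lem:clconvCandclSC}~$1^\circ$ gives $\cl(\conv\,C)=\cl(\conv\,D)=\RR_+\cdot\cl\,S_D$ with $\cl\,S_D\subseteq\mathbb{B}_X$, so every $c\in B_C$ writes as $c=\lambda s$ with $s\in\cl\,S_D$ and $\lambda=1/\|s\|\geq 1$, whence $x^*(c)=\lambda\,x^*(s)\geq\lambda\mu\geq\mu$. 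Thus $x^*(c)\geq\mu\|c\|>\beta\|c\|$ for all $c\in C\setminus\{0_X\}$, i.e. $C\setminus\{0_X\}\subseteq C^>(x^*,\beta)=\intt\,C(x^*,\beta)$ (using $\beta<\mu\leq\|x^*\|_*$ together with Lemma~\ref{lem:properties_BP}~$3^\circ$ and Remark~\ref{rem:BPcone}). Setting $K:=C(x^*,\beta)$, which is non-trivial, closed, pointed, solid and convex, the boundary computation \eqref{eq:sup_alpha} yields $\sup_{k\in S_{\bd K}}x^*(k)=\beta$, so $0<\sup_{S_{\bd K}}x^*=\beta<\mu=\inf_{S_D}x^*$, which is exactly \eqref{eq3}. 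Then $4^\circ\Longrightarrow 5^\circ$ follows: once $\sup_{S_{\bd K}}x^*>0$ we have $\sup_{S_{\bd K}^0}x^*=\sup_{S_{\bd K}}x^*$ (as $0_X\in S_{\bd K}^0$), and Proposition~\ref{prop:basic separation} ($2^\circ\Longrightarrow 1^\circ$) turns the strict inequality into \eqref{eq4}.

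For the closing remarks, the replacement of $\sup_{S_{\bd K}}$ by $\sup_{S_{\bd K}^0}$ in $3^\circ$--$4^\circ$ is free, because the sign constraint $\sup_{S_{\bd K}}x^*>0$ forces the two suprema to coincide (both equal $\beta$ for the constructed cone, by \eqref{eq:sup_alpha}); the assertion that $K$ may be taken solid, pointed, closed and convex is automatic, since the witness I construct is a Bishop--Phelps cone with these properties while every reverse implication used no structure on $K$ beyond non-triviality. The dilating refinement records that the same construction already delivers $C\setminus\{0_X\}\subseteq\intt\,K$ from the strict bound $x^*(c)>\beta\|c\|$; when $\conv\,C=\conv\,D$ this upgrades to $(\conv\,C)\setminus\{0_X\}\subseteq\intt\,K$ using that $\intt\,K=C^>(x^*,\beta)$ is a convex cone. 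The main obstacle I anticipate is precisely the construction step $2^\circ\Longrightarrow 4^\circ$: securing the \emph{uniform} lower bound $\inf_{B_C}x^*\geq\inf_{S_D}x^*$ is exactly where the hypothesis $\cl(\conv\,C)=\cl(\conv\,D)$ and the inclusion $\cl\,S_D\subseteq\mathbb{B}_X$ are indispensable, and separating the \emph{boundary} of $K$ (not merely $C$) rests on the exact value $\beta$ of $\sup_{S_{\bd K}}x^*$ provided by \eqref{eq:sup_alpha}.
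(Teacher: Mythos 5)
Your proof is correct and closes the full cycle of equivalences, and its skeleton coincides with the paper's: both route every assertion through $2^\circ$ (equivalently, through $0_X \notin \cl\,S_D$, via Proposition~\ref{prop:basic separation} and \eqref{eq:KwellbasedIntK+neqEmpty}), both use the identical Bishop--Phelps witness $K = C(x^*,\beta)$ with $\beta \in (0, \inf_{x\in S_D} x^*(x))$ for $2^\circ \Rightarrow 4^\circ$, and both read off $\sup_{k \in S_{\bd\,K}} x^*(k) = \beta$ from \eqref{eq:sup_alpha}. Two supporting steps differ, and the differences are worth recording. First, to obtain $C \subseteq K$ the paper argues through the augmented dual cone calculus, $(x^*,\beta) \in D^{a\#} \subseteq D^{a+} = (\cl(\conv\,D))^{a+} = (\cl(\conv\,C))^{a+} = C^{a+}$, which in general yields only the non-strict inclusion and requires the extra hypothesis $\conv\,C = \conv\,D$ to upgrade to $C\setminus\{0_X\} \subseteq \intt\,K$ (the obstruction being that $D^{a\#}$ need not lie in $(\cl(\conv\,D))^{a\#}$, cf.\ \eqref{eq:clconvCa+=Ca+}). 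You instead use the representation $\cl(\conv\,D) = \RR_+ \cdot \cl\,S_D$ from Lemma~\ref{lem:clconvCandclSC} and the elementary estimate $c = \lambda s$, $\lambda = 1/\|s\| \geq 1$, $s \in \cl\,S_D$, to get $x^*(c) \geq \mu > \beta$ for every $c \in B_C$; this delivers the dilating inclusion $C\setminus\{0_X\} \subseteq \intt\,K$ already under the blanket hypothesis $\cl(\conv\,C) = \cl(\conv\,D)$, so your argument proves (in fact slightly strengthens) the theorem's final refinement. Second, for $6^\circ$ the paper sets $K' := (X\setminus K) \cup \{0_X\}$, the complement cone of the constructed separating cone, and invokes Propositions~\ref{prop:basic separation_BP_a} and~\ref{prop:basic separation_BP_bd}, whereas you take the cheap witness $K' := \RR_+ v$ with $x^*(v) < 0$ and only Proposition~\ref{prop:basic separation}. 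Your route is more elementary and decouples $6^\circ$ from $K$ entirely; what the paper's choice buys is that $K'$ is geometrically tied to the separating cone $K$, which is the link exploited in Remark~\ref{rem:well_based_alternative_bis} connecting $6^\circ$ to the families $\mathcal{C}_{{\rm BP}^*_i}(D)$.
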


\begin{proof}
    First, let us show the equivalence of the assertions $1^\circ-4^\circ$. 
    
    $1^\circ \Longleftrightarrow 2^\circ$: In view of \eqref{eq:KwellbasedIntK+neqEmpty}
    and 
    $
    C^{+} = (\cl C)^{+} = (\conv\, C)^{+} = (\cl(\conv\, C))^{+},
    $    
    for any non-trivial cone $D \subseteq X$ with $\cl(\conv\, C) = \cl(\conv\,D)$, we have
    \begin{align*}
        \exists\, x^* \in X^* \colon \eqref{eq1} \text{ is valid}
        & \iff 0_X \notin \cl\,S_D && \text{(by Proposition \ref{prop:basic separation})}\\
        & \iff \intt\,D^{+}\neq\emptyset\\
        & \iff \intt\,(\cl(\conv\, D))^{+}\neq\emptyset\\
        & \iff \intt\,(\cl(\conv\, C))^{+}\neq\emptyset\\
        & \iff \intt\,C^{+}\neq\emptyset\\
        & \iff\conv\,C \text{ is well-based} \\
        & \iff\cl(\conv\,C) \text{ is well-based}.
    \end{align*}
    This shows that $1^\circ \Longleftrightarrow 2^\circ$.

    $2^\circ \Longrightarrow 4^\circ$: Assume that \eqref{eq1} is valid for some $x^* \in X^*$. Define $\beta := \inf_{x \in S_{D}}\,x^*(x)$ and $K := C(x^*, \alpha)$ for some $\alpha \in (0, \beta)$. In view of Lemma~\ref{lem:properties_BP}~ ($3^\circ$) we have 
    $\alpha = \sup_{x \in S_{\bd \, K}} x^*(x)
     \; (= \sup_{x \in S_{\bd \, K}^0} x^*(x))$. 
    Note that the Bishop-Phelps cone $K$ is a non-trivial (solid, pointed, closed, convex) cone 
    taking into account that $||x^*||_* \geq \beta > \alpha > 0$. We conclude that \eqref{eq3} is valid.
    It remains to prove that $C \subseteq K$. Since $\beta > \alpha > 0$ we have $x^*(x) > \alpha > 0$ for all $x \in B_{D}$, hence $(x^*, \alpha) \in D^{a\#} \subseteq D^{a+}$.
    By our assumption $\cl(\conv\, C) = \cl(\conv\,D)$ we get 
    $
    (x^*, \alpha) \in D^{a+} = (\cl(\conv\,D))^{a+} = (\cl(\conv\, C))^{a+} = C^{a+},
    $
    which means that $C \subseteq K$. Note that, if $\conv\, C = \conv\,D$, then 
    $(x^*, \alpha) \in D^{a\#} = (\conv\, D)^{a\#} = (\conv\, C)^{a\#} = C^{a\#},$
    hence $C \setminus \{0_X\} \subseteq C^>(x^*, \alpha) = \intt\,K$.  This shows that $4^\circ$ is true. 
    
    $4^\circ \Longrightarrow 3^\circ \Longrightarrow 2^\circ$: Both implications are obvious.

    \medskip
     
    From the above proof of the equivalence of $1^\circ-4^\circ$ it can be seen that in the statements $3^\circ$ and $4^\circ$ the value $\sup_{k \in S_{\bd \, K}}\,x^*(k)$ can be replaced by the value $\sup_{k \in S_{\bd \, K}^0}\,x^*(k)$ (having in mind Lemma \ref{lem:properties_BP} ($3^\circ$)), while the non-trivial cone $K$ can be assumed to be solid, pointed, closed and convex.

    \medskip
    Now, let us show the equivalence of the assertions $2^\circ$ (equivalently, $4^\circ$), $5^\circ$ and $6^\circ$.

    $4^\circ \Longrightarrow (5^\circ \wedge 6^\circ)$: Assume that $4^\circ$ with $S_{\bd \, K}$ replaced by $S_{\bd \, K}^0$ in \eqref{eq3} is valid. As observed above, we can suppose that the non-trivial cone $K$ is (solid, pointed) closed and convex.  By Proposition \ref{prop:basic separation_BP_a} (applied for $D$ and $\bd \, K$ in the role $C$ and $K$)  we immediately get \eqref{eq4}, hence $5^\circ$ is valid. 
    Define the non-trivial cone $K' := (X\setminus K) \cup \{0_X\}$. Then, 
    $\bd\,K = \bd\,K'$, and since $K \supseteq \cl(\conv\, C) = \cl(\conv\,D) \supseteq D$, we have $D \cap K' = \{0_X\}$. The latter condition combined with \eqref{eq4} is equivalent to \eqref{eq5} in view of Proposition \ref{prop:basic separation_BP_bd} (applied for $D$ and $K'$ in the role $C$ and $K$). This proves that $6^\circ$ is valid.

    $(5^\circ \vee 6^\circ) \Longrightarrow 2^\circ$: 
    Since $0_X \in S_{K'}^{0} \cap S_{\bd K}^{0}$, we have that $0_X\not \in \cl(S_D - S_{\bd K}^{0})$ (respectively, $0_X\not \in \cl(S_D - S_{K'}^{0})$) implies $0_X\not \in \cl(S_D -\{0_X\}) = \cl\, S_D$, which is equivalent to $2^\circ$.
    
    \qed
    
\end{proof}

\begin{remark} \label{rem:well_based_alternative_bis}
    Note that the results in Sections \ref{sec:sym_and_non-sym_separation} and \ref{sec:sym_and_non-sym_cone_separation} provide some more equivalent statements for $6^\circ$ in Theorem \ref{th:well_based_alternative}. Indeed, by Theorem \ref{th:non-sym_sep_theorem_1} we have
    $$
    \eqref{eq5} \text{ is valid } \iff \mathcal{N}(D, K' \mid \mathcal{C}_{{\rm BP}^*_1}(D)) \neq \emptyset,
    $$
    and by Theorem \ref{th:non-sym_sep_theorem_3}, if $\cl\, S_{D}$ is weakly compact, then 
    \begin{align*}
        \eqref{eq5} \text{ is valid } &\; \iff \mathcal{N}(D, K' \mid \mathcal{C}_{{\rm BP}^*_2}(D)) \neq \emptyset\\
        &\; \iff \mathcal{N}(D, K' \mid \mathcal{C}_{{\rm BP}^*_3}(D)) \neq \emptyset \\
        &\; \iff (\cl\,S_{K'}^0) \cap (\cl\, S_{D}) = \emptyset.
    \end{align*}
\end{remark}

The following result in Corollary \ref{cor:well_based_alternative} highlights the differences and similarities between a convex cone $\conv\,C$ having a (bounded) convex base and a convex closed cone $\cl(\conv\,C)$ having a (bounded) convex base.  This is achieved by giving equivalent conditions based on inclusions between the (not necessarily convex) norm-bases $B_{C}$ and $B_{\cl\,C}$ of the (not necessarily convex) cones $C$ and $\cl\,C$, and certain supporting half-spaces.

\begin{corollary} \label{cor:well_based_alternative}
The following assertions hold:  
\begin{itemize}
    \item[$1^\circ$] $\conv\,C$ has a convex base if and only if there exists $x^*\in X^*$ such that 
    $$B_C \subseteq \{x \in X \mid x^*(x) > 0\}.$$
     \item[$2^\circ$] If $\conv(\cl\,C) = \cl(\conv\,C)$ (e.g., if $C$ is convex), then\\$\cl(\conv\,C)$ has a convex base if and only if there exists $x^*\in X^*$ such that $$B_{\cl \,C} \subseteq \{x \in X \mid x^*(x) > 0\}.$$
    \item[$3^\circ$]  $\conv\,C$ is well-based if and only if there exist $x^*\in X^*$ and $\alpha>0$ such that $$ B_{C} \subseteq \{x \in X \mid x^*(x) \geq \alpha\}.$$
    \item[$4^\circ$] $\cl(\conv\,C)$ is well-based if and only if there exist $x^*\in X^*$ and $\alpha>0$ such that $$B_{\cl \,C} \subseteq \{x \in X \mid x^*(x) \geq \alpha\}.$$
\end{itemize}
\end{corollary}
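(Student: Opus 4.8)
The plan is to reduce each of the four equivalences to machinery already available in the excerpt, together with one elementary observation about the norm-base. That observation is: for any non-trivial cone $D$ and any $x^* \in X^*$, positive homogeneity gives that $x^* \in D^\#$ if and only if $B_D \subseteq \{x \in X \mid x^*(x) > 0\}$, and likewise $\inf_{x \in S_D} x^*(x) = \inf_{x \in B_D} x^*(x)$, since the infimum of a linear functional over $\conv(B_D) = S_D$ agrees with its infimum over $B_D$. With these identities the four assertions split naturally into the two concerning existence of a convex base ($1^\circ$, $2^\circ$) and the two concerning well-basedness ($3^\circ$, $4^\circ$).

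For $1^\circ$ I would invoke \eqref{eq:KbasedKSharpneqEmpty}, which states that $\conv\,C$ has a convex base if and only if $C^\# \neq \emptyset$; combining this with the equivalence $C^\# \neq \emptyset \iff \exists\, x^* \colon B_C \subseteq \{x \mid x^*(x) > 0\}$ gives the claim at once. For $2^\circ$ I would apply \eqref{eq:KbasedKSharpneqEmpty} to the cone $\cl\,C$ (legitimate in the generic non-trivial situation), obtaining that $\conv(\cl\,C)$ has a convex base iff $(\cl\,C)^\# \neq \emptyset$ iff $\exists\, x^* \colon B_{\cl\,C} \subseteq \{x \mid x^*(x) > 0\}$; the standing hypothesis $\conv(\cl\,C) = \cl(\conv\,C)$ then lets me replace $\conv(\cl\,C)$ by $\cl(\conv\,C)$ on the left-hand side.

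For $3^\circ$ and $4^\circ$ I would use Theorem \ref{th:well_based_alternative} ($1^\circ \Longleftrightarrow 2^\circ$), which characterizes well-basedness of $\conv\,C$ (equivalently of $\cl(\conv\,C)$, by \eqref{eq:KwellbasedIntK+neqEmpty}) through the existence of $x^*$ with $0 < \inf_{x \in S_D} x^*(x)$, for any non-trivial $D$ with $\cl(\conv\,C) = \cl(\conv\,D)$. Taking $D = C$ and using $\inf_{x\in S_C} x^*(x) = \inf_{x\in B_C} x^*(x)$ yields $3^\circ$, since $\inf_{x\in B_C} x^*(x) > 0$ is exactly the assertion that $B_C \subseteq \{x \mid x^*(x) \geq \alpha\}$ for some $\alpha > 0$. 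Taking instead $D = \cl\,C$ (permissible by Remark \ref{rem:well_based_alternative}, which guarantees $\cl(\conv\,C) = \cl(\conv\,\cl\,C)$) and again passing from $S_{\cl\,C}$ to $B_{\cl\,C}$ yields $4^\circ$.

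The main subtlety — and the step I would treat most carefully — is the role of the hypothesis $\conv(\cl\,C) = \cl(\conv\,C)$ in $2^\circ$ and its absence in $4^\circ$. For base existence the relevant dual object $(\cl\,C)^\#$ detects a convex base of $\conv(\cl\,C)$, not directly of $\cl(\conv\,C)$; since the inclusion $(\cl(\conv\,C))^\# \subseteq C^\#$ can be strict (cf. \cite[p. 55]{GoeRiaTamZal2023}), the two base-existence statements need not coincide, and the hypothesis is precisely what forces $\conv(\cl\,C)$ and $\cl(\conv\,C)$ to agree. By contrast, well-basedness is automatically stable under taking convex hulls and closures by \eqref{eq:KwellbasedIntK+neqEmpty}, so no such hypothesis is needed for $3^\circ$ and $4^\circ$. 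Degenerate cases (e.g. $\cl\,C = X$) should be dispatched directly, as both sides of the respective equivalence then fail.
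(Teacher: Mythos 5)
Your proof is correct, and it splits the corollary exactly as the paper does: assertions $1^\circ$ and $2^\circ$ via \eqref{eq:KbasedKSharpneqEmpty} together with the homogeneity identity $D^\# \neq \emptyset \iff \exists x^*\colon B_D \subseteq \{x \mid x^*(x) > 0\}$, with the hypothesis $\conv(\cl\,C) = \cl(\conv\,C)$ used precisely where the paper uses it (to pass between $(\cl(\conv\,C))^\#$ and $(\cl\,C)^\#$). The one genuine difference is in $3^\circ$ and $4^\circ$: you route through Theorem \ref{th:well_based_alternative} ($1^\circ \Longleftrightarrow 2^\circ$) with $D = C$, respectively $D = \cl\,C$ (justified by Remark \ref{rem:well_based_alternative}), plus the elementary identity $\inf_{x \in S_D} x^*(x) = \inf_{x \in B_D} x^*(x)$, whereas the paper never invokes Theorem \ref{th:well_based_alternative} in this proof and instead argues through the augmented dual cone: $\conv\,C$ (resp.\ $\cl(\conv\,C)$) well-based $\iff C^{a+} \cap (X^* \times \mathbb{P}) \neq \emptyset \iff (\cl\,C)^{a+} \cap (X^* \times \mathbb{P}) \neq \emptyset$, using \eqref{eq:KwellbasedIntK+neqEmpty}, \eqref{eq:clconvCa+=Ca+} and Proposition \ref{prop:properties_augmented_dual_cone} ($1^\circ$), where the displayed inclusion $B_{C} \subseteq \{x \mid x^*(x) \geq \alpha\}$ is literally the defining representation of $C^{a+}$. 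Both routes rest on the same underlying facts (Theorem \ref{th:well_based_alternative}'s equivalence $1^\circ \Longleftrightarrow 2^\circ$ is itself proved from \eqref{eq:KwellbasedIntK+neqEmpty} and Proposition \ref{prop:basic separation}), so nothing is lost; your version is the more natural reading of a ``corollary'' and avoids augmented dual cones entirely, while the paper's version makes the $C^{a+} = (\cl\,C)^{a+}$ invariance explicit, which is what renders the absence of any extra hypothesis in $4^\circ$ transparent --- a point you correctly identified and explained via the possible strictness of $(\cl(\conv\,C))^\# \subseteq C^\#$. Your explicit dispatch of the degenerate case $\cl\,C = X$ (needed because Theorem \ref{th:well_based_alternative} and \eqref{eq:KbasedKSharpneqEmpty} are stated for non-trivial cones) is also sound: both sides of the relevant equivalences fail there.
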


\begin{proof}
  In view of \eqref{eq:K+=convK+=clvonvK+} and \eqref{eq:KbasedKSharpneqEmpty}, we have
\begin{align*}
   \conv\,C \text{ has a convex base} 
    & \iff (\conv\,C)^\# \neq \emptyset \\
    & \iff C^\# \neq \emptyset \\
    & \iff \exists\, x^* \in X^*:\; x^*(x) > 0 \text{ for all } x \in B_{C}
\end{align*}
and
\begin{align*}
    \cl(\conv\,C) \text{ has a convex base} 
    & \iff (\cl(\conv\,C))^\# \neq \emptyset \\
    & \iff (\conv(\cl\,C))^\# \neq \emptyset \\
    & \iff (\cl\,C)^\# \neq \emptyset \\
    & \iff \exists\, x^* \in X^*:\; x^*(x) > 0 \text{ for all } x \in B_{\cl\,C}.
\end{align*}
Moreover, in view of \eqref{eq:KwellbasedIntK+neqEmpty}, \eqref{eq:clconvCa+=Ca+} and Proposition \ref{prop:properties_augmented_dual_cone} ($1^\circ$), it follows that
\begin{align*}
    \conv\,C \text{ is well-based} 
    & \iff C^{a+} \cap (X^* \times \mathbb{P}) \neq \emptyset \\
    & \iff \exists\, (x^*, \alpha) \in X^* \times \mathbb{P}:\; x^*(x) \geq \alpha \text{ for all } x \in B_{C}
\end{align*}
and
\begin{align*}
\cl(\conv\,C) \text{ is well-based} 
    & \iff C^{a+} \cap (X^* \times \mathbb{P}) \neq \emptyset \\
    & \iff (\cl\,C)^{a+} \cap (X^* \times \mathbb{P}) \neq \emptyset \\
    & \iff \exists\, (x^*, \alpha)  \in X^* \times \mathbb{P}:\; x^*(x) \geq \alpha \text{ for all } x \in B_{\cl\,C}.
\end{align*}
\qed
\end{proof}

\begin{remark}\label{r-appl}
The existence results for a (bounded) convex base of convex cones in real normed spaces which we have derived in this section employing our (symmetric and non-symmetric) cone separation results are useful for various applications. We will briefly mention some important applications of our results in the field of optimization.
Especially, the existence of a bounded base is important for showing existence and density results in optimization and functional analysis. In the following, we list some results from the literature where cones with a bounded base play a role.
\begin{itemize}
\item[$\bullet$] In the minimal point theorem by Phelps, the involved cone has a bounded base (compare \cite[Proposition 3.11.2]{GoeRiaTamZal2023} and the corresponding discussion there). 

\item[$\bullet$] In the discussion of sufficient conditions for the existence of minimal points of subsets of product spaces, the boundedness of the base of a convex cone is important (see \cite[Section 3.11.1]{GoeRiaTamZal2023}). 

\item[$\bullet$] Postolic\u{a} has shown in \cite[Corollary 3.2.1]{Pos93} that a bounded and closed subset of a Hausdorff locally convex space has the domination property under the assumption that the involved cone has a complete bounded base. The domination property plays an important role in optimization theory.

\item[$\bullet$] Petschke in \cite{Petschke1990} obtains a density result for the set of positive proper minimals within the set of minimal points, extending the well-known Arrow-Barankin-Blackwell theorem to infinite dimension spaces under the assumption that the ordering cone is well-based. This result was later attempted to be generalized by Gong in \cite{Gong1995} using the notion of a point of continuity (a slight weakening of the notion of a denting point). Whether the result established by Gong was indeed a generalization of Petschke's result was stated as an open problem in \cite{Gong1995}, which was later analyzed in \cite[Section 4]{GarciaCastano2021}.

\end{itemize}
\end{remark}
\color{black}

%=========================================================
\section{Conclusions} 
\label{sec:conclusions}
%=========================================================

From a classical separation perspective in convex analysis and optimization (without considering a specific application), a symmetric separation concept for cones seems to be preferable. However, it is important to note that many significant new results for the symmetric cone separation approach were derived from the non-symmetric cone separation approach (thanks to Propositions~\ref{prop:basic separation_BP_e} and~\ref{prop:basic separation_BP_f}). As discussed in Remark~\ref{r-appl}, our results on the existence of a (bounded) base of convex cones have important applications in optimization, where they can be used to derive existence statements, for example, in the proof of general minimal point theorems.

Future research directions include extending the symmetric cone separation results to separation theorems for (not necessarily convex) sets without cone properties in real normed spaces. Additionally, we aim to develop new scalarization methods based on Bishop-Phelps cone-representing scalarizing functions, such as norm-linear functions, for vector optimization. These scalarizing functions are expected to be useful for establishing existence and density results for properly minimal points in vector optimization problems.

\section*{Acknowledgement}
The authors Fernando García-Castaño and Miguel Ángel Melguizo-Padial have been supported by Project PID2021-122126NB-C32 funded by MICIU/AEI /10.13039/501100011033/ and FEDER A way of making Europe. The authors would also like to thank the anonymous reviewers and the editor for their valuable comments and suggestions, which helped to improve the quality of the manuscript.

%References
% BibTeX users  please use  \bibliographystyle{spmpsci_unsrt}. The option spmpsci_unsrt prints the references in JOTA format  in the order they are cited.  

%\bibliographystyle{spmpsci}
%\bibliography{references}

\end{document}